 \DeclareMathOperator{\diam}{diam}
 \DeclareMathOperator{\cl}{cl}
 \DeclareMathOperator{\dist}{dist}
\newcommand{\J}{\mathbb{J}}
\newcommand{\I}{\mathbb{I}}
\newcommand{\A}{\mathcal{A}}
\newcommand{\R}{\mathbb{R}}
\newcommand{\N}{\mathbb{N}}
\newcommand{\Q}{\mathbb{Q}}
\newcommand{\e}{\varepsilon}
\newcommand{\w}{\omega}
\newcommand{\K}{\mathcal K}
\newcommand{\C}{\mathcal{C}}
\newcommand{\F}{\mathcal{F}}
\newcommand{\pr}{\mathrm{pr}}
\newtheorem{theorem}{Theorem}[section]
\newtheorem{lemma}[theorem]{Lemma}
\newtheorem{corollary}[theorem]{Corollary}
\newtheorem{proposition}[theorem]{Proposition}
\newtheorem{fact}[theorem]{Fact}
\theoremstyle{definition}
\newtheorem{examples}[theorem]{Examples}
\theoremstyle{remark}
\newtheorem{remark}[theorem]{Remark}
\newtheorem{remarks}[theorem]{Remarks}
\newtheorem{claim}{Claim}[theorem]
\numberwithin{equation}{section}
\begin{document}

\title{Hyperspaces of countable compacta}

\author{Taras Banakh}
\email{t.o.banakh@gmail.com}
\address{Ivan Franko National University of Lviv (Ukraine) and Jan Kochanowski University in Kielce (Poland)}

\author{Pawe{\l}  Krupski}

\email{pawel.krupski@pwr.edu.pl}
\address{Faculty of Pure and Applied Mathematics, Wroc{\l}aw University of Science and Technology, Wybrze\.{z}e Wys\-pia\'n\-skiego 27, 50-370 Wroc{\l}aw, Poland.}

\author{Krzysztof Omiljanowski}
\email{Krzysztof.Omiljanowski@math.uni.wroc.pl}
\address{Mathematical Institute, University of Wroc{\l}aw, Pl. Grunwaldzki 2/4, 50-384 Wroc{\l}aw, Poland}
\date{\today}
\subjclass[2020]{Primary 57N20; Secondary 54B20, 54H05}
\keywords{absorbing set, absolute retract, accumulation point, Borel set, coanalytic set, Hilbert cube, hyperspace, locally connected space, strongly universal set}

\begin{abstract}
 Hyperspaces $\mathcal H(X)$ of all countable compact subsets of a metric space $X$ and $\mathcal A_n(X)$    of infinite  compact subsets   which have at most $n$ ($n\in\N$), or finitely many ($n=\w$) or countably many ($n=\w+1$) accumulation points  are studied. By  descriptive set-theoretical methods, we  fully characterize them  for 0-dimensional, dense-in-itself, Polish spaces and partially for $\sigma$-compact spaces $X$. Using the theory of absorbing sets, we get characterizations of $\mathcal H(X)$, $\mathcal A_\w(X)$ and $\mathcal A_{\w+1}(X)$ for  nondegenerate connected, locally connected Polish spaces $X$ which are either locally compact or nowhere locally compact. For every  $n\in\N$, we show that if  $X$ is an interval or a simple closed curve, $\mathcal A_n(X)$ is homeomorphic to the linear space $c_0=\{(x_i) \in\mathbb R^{\w}: \lim x_i=0\}$ with the product topology;
 if $X$ is a Peano continuum and a point $p\in X$ is of order $\ge 2$,  then the hyperspace $\mathcal A_1(X,\{p\})$ of all compacta with exactly one accumulation point $p$ also is  homeomorphic to $c_0$.

\end{abstract}
\maketitle

\section{Introduction}
All spaces in the paper are metric.

Let  $\mathcal K(X)$ be the hyperspace of all nonempty compact subsets of $X$ with the Vietoris topology. It is well known that $\mathcal K(X)$ shares many basic topological properties of space $X$ like, e.g.,  completeness, local compactness, compactness, connectedness, local  connectedness, dimension 0. Recall also that $\mathcal K(X)$ is an absolute neighborhhood retract (ANR) if and only if $X$ is locally continuum-connected and it is an absolute retract (AR) if, additionally,  $X$ is connected~\cite{Cu0}; if $X$ is nondegenerate noncompact, locally compact, locally connected (connected) then $\mathcal K(X)$ is an $\I^\w$-manifold ($\cong\I^\w\setminus\{point\}$)~\cite{Cu0}. For a nondegenerate Peano continuum $X$, $\K(X)  \cong  \I^\w$ (the symbol $\cong$ stands for ``homeomorphic to'').

The  hyperspace $ \mathcal F(X)\subset \K(X)$ of all finite subsets of $X$ was also extensively studied for various spaces $X$. Clearly, for the rationals $\Q$, $\mathcal F(\Q)\cong\Q$. It follows from Lemma~\ref{l:Engelen} (\cite[Lemma 3.1]{E2}) that $\mathcal F(\R\setminus\Q)\cong\Q\times(\R\setminus\Q)$ and $\mathcal F(\{0,1\}^\w)\cong\Q\times \{0,1\}^\w$. If $X$ is  locally path-connected (and connected) then   $\mathcal F(X)$ is  an ANR (AR) which is homotopy dense in $\mathcal K(X)$~\cite{CuN}; for a nondegenerate Peano continuum  $X$, $\mathcal F(X)\cong [0,1]^\omega\setminus (0,1)^\omega$~\cite{Cu}.

Another interesting subspace of $\mathcal K(X)$  is the hyperspace $\mathcal H(X)$ of all nonempty at most countable compacta which seems to have been  less recognized. In general, if $X$ is an uncountable Polish space, then $\mathcal H(X)$ is $\mathbf \Pi^1_1$-complete~\cite[Theorem (27.5)]{Ke} (in such case we will call it the \emph{Hurewicz set for} $X$).  The hyperspace $\mathcal H(\Q)$ was characterized by H. Michalewski~\cite{Mi} as a first category, zero-dimensional, separable, metrizable space with the property that every nonempty clopen subset is $\mathbf \Pi^1_1$-complete.
For $X=\I$, $\mathcal H(X)$  was fully topologically characterized by R. Cauty as a $\mathbf \Pi^1_1$-absorbing set, so it  is homeomorphic, e.g., to the space of all differentiable functions $\I\to\R$ with the uniform convergence~\cite{C1}.

Denote by  $A'$  the derived set of all accumulation points of $A\in \mathcal K(X)$. In this paper we mainly study   the following natural hyperspaces of countable  compacta in $X$:
\begin{itemize}
\item
$\mathcal A_n(X)=\{A\in \mathcal K(X): 1\le |A'|\le n\}$ ($n\in\N$),
\item
 $\mathcal A_\omega(X)= \{A\in \mathcal K(X): 1\le |A'|< \w\}$.
\item
$\mathcal H(X)$ and $\mathcal A_{\w+1}(X)=\{A\in \mathcal K(X): 1\le |A'|\le \w\}=\mathcal H(X)\setminus \F(X)$.

\end{itemize}
Our first task is to evaluate the descriptive complexity of the hyperspaces.
Clearly, $\mathcal A_{\w+1}(X)$ is a $G_\delta$-subset of  $\mathcal H(X)$, so  whenever   $\mathcal H(X)$ is absolute coanalytic, $\mathcal A_{\w+1}(X)$ is also such.
 Evaluations of absolute or exact Borel classes of $\mathcal A_n(X)$ is more complicated. For any Polish space $X$ without isolated points, $\mathcal A_n(X)$ is true absolute $F_{\sigma\delta}$ and $\mathcal A_\omega(X)$ is true absolute $F_{\sigma\delta\sigma}$. For $X=\Q$,  $\mathcal A_n(\Q)$ is in the small Borel class $D_{2n}(F_{\sigma\delta})$ in $\mathcal K(\R)$ but we do not know if it is absolute $F_{\sigma\delta}$.

 Next, we characterize $\mathcal A_n(X)$, $n\in \N$,  for any 0-dimensional Polish space which is dense-in-itself (i.e., without isolated points) as the infinite  product $\Q^\w$. If $X$ is a 0-dimensional $\sigma$-compact metric space without isolated points  then
 $$\mathcal A_n(X,F):=\{A\in \A_n(X): A'\subset F\}\cong\mathbb Q^\w$$
  for any $F\in\K(X)$ of cardinality $|F|\ge n$. In particular, the hyperspace $\mathcal A_1(\Q,\{q\})=\{A\in \mathcal A_1(\Q): A'=\{q\}\}$ also is homeomorphic to  $\Q^\w$. Thus, we get a partial answer to  the question asked in~\cite{GO} if  $\mathcal A_1(\mathbb R \setminus  \mathbb Q)$ is homeomorphic to $\mathcal A_1(\mathbb Q)$. The full positive answer is equivalent to the $F_{\sigma\delta}$-absoluteness of $\mathcal A_1(\Q)$ which remains an open problem. 

We show that for  a dense-in-itself, 0-dimensional space $X$, the hyperspace $\mathcal A_\omega(X)$  is homeomorphic to the standard, everywhere $\mathbf \Pi^0_4$-complete  set $S_4\subset \{0,1\}^w$  in two cases: $X$ a Polish space or a $\sigma$-compact metric space.

If $X$ is a dense-in-itself, 0-dimensional Polish space, then the hyperspaces $A_{\w+1}(X)$ and $\mathcal H(X)$   are homeomorphic to $\mathcal H(\Q)$.

\

 Studying hyperspaces of compacta of reasonably nice spaces of positive dimensions, we unavoidably enter into   infinite-dimensional topology. Here, we  intensively employ the theory of absorbing sets. We describe (apparently new) an  $F_{\sigma\delta}$-absorber $\Pi_3$ and an  $F_{\sigma\delta\sigma}$-absorber $ \Sigma_4$  in the Hilbert cube and   our main results in Section~\ref{s:abs} are the following characterizations:
 $$\A_\w(X)\cong \Sigma_4,\quad A_{\w+1}(X)\cong \mathcal H(X)\cong \mathcal H(\I)$$  if
 $X$ is  nondegenerate, connected, locally connected and either (1) locally compact or (2) Polish, nowhere locally compact.

 One of  the simplest examples of   type (2)-spaces is  $\R^2\setminus \Q^2$.   Other natural examples of such spaces   include the set of all ``irrational points'' of the Sierpiński carpet, infinite countable products of non-compact intervals, N\"{o}beling or Lipscomb universal spaces of dimension $\ge 1$.
In particular, the characterization  extends  the Cauty's characterization of $\mathcal H(\I)$  over all spaces $X$ as in (1) and (2).

The hyperspace $\mathcal A_n(X)$  is more difficult to handle. In Sections~\ref{interval} and~\ref{spheres}, the following characterizations are obtained:
$$\A_n(\I)\cong \A_n((0,1))\cong \A_n(S^1)\cong\Pi_3\cong c_0:=\{(x_i)\in\R^\w: \lim x_i=0\}.$$
Incidentally, the  characterizations  answer~\cite[Question 2.17]{GO} and  a question in~\cite{CMP} if  $\A_n(S^1)$ is contractible.

Finally, in Section~\ref{Peano}, we show that if $X$ is a Peano continuum with a point $p$ of order $\ge 2$ and $p\in F\in\F(X)$, then
 $$\{A\in \mathcal A_n(X): F\subset A\}\cong \mathcal A_1(X,\{p\})\cong c_0.$$

\section{Borel complexity of  $\mathcal A_n(X)$ and $\mathcal A_\omega(X)$}

Let us recall the standard notations of absolute Borel classes of spaces:
\begin{itemize}
\item for a countable ordinal $\alpha\ge 1$,  $\mathbf \Pi^0_\alpha$ is the absolute $\alpha$-th multiplicative class (i.e., $\mathbf\Pi^0_1$ is the class of compact metrizable spaces, $\mathbf \Pi^0_2$ is the class of Polish spaces, etc.);
\item for $\alpha\ge 2$, $\mathbf\Sigma^0_\alpha$ is the absolute   $\alpha$-th additive class (i.e., $\mathbf\Sigma^0_2$ is the class of $\sigma$-compact spaces, $\mathbf \Sigma^0_3$ is the class of absolute $G_{\delta\sigma}$-spaces, etc.).
\end{itemize}
The class of absolute coanalytic spaces is denoted by $\mathbf \Pi^1_1$.

 Let $\Gamma$ be a family of subsets of  $X$. For a natural number $n$, let $D_{2n}(\Gamma)$
 be the family of sets of type $\bigcup_{k=1}^n(A_{2k}\setminus A_{2k-1})$ where $(A_k)_{k=1}^{n}$ is an increasing sequence of sets from $\Gamma$.
For the class $\Gamma$ of $F_{\sigma\delta}$-sets, elements of the small Borel class $D_{2n}(\Gamma)$ will be called sets of difference type $D_{2n}(F_{\sigma\delta})$.

If $\mathcal P$ is a topological property, then  a subspace $Y\subset Z$ is   everywhere (nowhere) $\mathcal P$ if every (no) nonempty, relatively  open subset of $Y$ has $\mathcal P$.

\

 For $E,F\subset X$ and $n\in\mathbb N$, denote:
\begin{itemize}
\item
 the closed subspace $\mathcal F_n(X)=\{A\in \mathcal K(X): |A|\le n\}$ of $\mathcal K(X)$,
  \item
 $\mathcal A_{=n}(X)=\{A\in \mathcal K(X):  |A'|= n\}$,
 \item
 $\mathcal A_n(E,F)=\{A\in \mathcal A_n(X):A\setminus A'\subset E \wedge  A'\subset F\}.$
 \item
 $\mathcal A_{=n}(E,F)=\{A\in \mathcal A_{=n}(X):A\setminus A'\subset E \wedge  A'\subset F\}.$
\item
 $\K(X)^F=\{K\in\K(X):  F\subset K\},\quad \mathcal A_n(X)^F=\A_n(X)\cap \K(X)^F.$
 \item
 $\K(X)_F=\{K\in\K(X):  F\cap K\neq\emptyset\},\quad \mathcal A_n(X)_F=\A_n(X)\cap \K(X)_F.$
 \end{itemize}

\

\

\begin{theorem}\label{p'}
For each $n\in\N$,
\begin{enumerate}
\item
$\mathcal A_n(X)$ and $\mathcal A_n(X,F)$, for any $F\in\K(X)$, are  $F_{\sigma\delta}$-sets in $\mathcal K(X)$;
\item
 $\mathcal A_\omega(X)$ is $F_{\sigma\delta\sigma}$ in $\mathcal K(X)$;
\newline
if $X$ is metric separable and $F$ is an $F_\sigma$-subset of $X$, then
\item
  $\A_{=n}(F,X)$ is of type $F_{\sigma\delta}$ in $\A_{=n}(X)$;

  \item
  $\A_{=n}(X,F)$ is of type $G_{\delta\sigma}$ in $\A_{=n}(X)$;
 \item
  $\A_{=n}(F)$ is of type $D_2(F_{\sigma\delta})$ in $\A_{=n}(X)$;
 \item
$\A_{n}(F,X)$,  $\A_{n}(X,F)$ and $\A_{n}(F)$ are of type $D_{2n}(F_{\sigma\delta})$ in $\mathcal K(X)$.
\end{enumerate}
\end{theorem}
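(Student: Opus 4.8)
The plan rests on one elementary fact about the Vietoris topology: for every open $U\subseteq X$ and every $m\in\N$ the set $\{A\in\K(X):\card(A\cap U)\ge m\}$ is \emph{open} (pick $m$ points of $A\cap U$ and $m$ pairwise disjoint basic balls inside $U$ around them). Consequently, for open $U$, $\{A:A\cap U\text{ is infinite}\}$ is $G_\delta$, $\{A:A\cap U\text{ is finite}\}$ is $F_\sigma$, and $\{A:\card(A\cap U)\le m\}$ is closed. I fix once and for all a countable base $\mathcal B$ of $X$ consisting of balls.

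For (1) and (2) the decisive step is a localization of the derived-set operator: for open $V$ and $j\in\N$,
\[
\{A:\card(A'\cap V)\ge j\}=\bigcup_{(B_1,\dots,B_j)}\ \bigcap_{i\le j}\{A:A\cap B_i\text{ is infinite}\},
\]
the union being taken over all $j$-tuples $B_1,\dots,B_j\in\mathcal B$ with pairwise disjoint closures contained in $V$ (if $A\cap B_i$ is infinite then $\cl B_i$ carries an accumulation point of $A$, and disjoint closures separate them; the converse is obvious). Since each inner set is $G_\delta$, this displays $\{\card(A'\cap V)\ge j\}$ as $G_{\delta\sigma}$. Taking $V=X$ and complementing shows that $\{\card(A')\le n\}$ is $F_{\sigma\delta}$, and taking $V=X\setminus F$ for compact $F$ shows that $\{A'\subset F\}$ is $F_{\sigma\delta}$; intersecting with the $G_\delta$-set of infinite compacta proves (1). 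For (2) one simply writes $\{\card(A')<\w\}=\bigcup_{n}\{\card(A')\le n\}$, a countable union of $F_{\sigma\delta}$-sets, hence $F_{\sigma\delta\sigma}$.

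For (3)--(5) I would pass to the ambient space $\A_{=n}(X)$, in which $\card(A')=n$ is \emph{fixed}, and exploit this rigidity. Here the operator $A\mapsto A'$ is \emph{not} continuous (a cluster of $A$ may collapse onto an isolated point of a limit set), so the bounds coming from the previous paragraph are a priori one class too high; the entire point of fixing the count $n$ is that each relevant predicate drops exactly one level. The mechanism is to replace the ``$\exists$-a-cluster'' predicates above (which are only $G_{\delta\sigma}$) by the \emph{honest} predicates ``$A\cap U$ is infinite'' ($G_\delta$), and to use that, since there are exactly $n$ clusters, a conjunction of such predicates over a decomposition of $X$ becomes \emph{equivalent} to a containment statement about $A'$. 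The clean model is $n=1$: for a single cluster, $A'\subset C$ holds iff $A\cap C^{\e}$ is infinite for every $\e>0$ (where $C^\e$ is the open $\e$-neighborhood of $C$), because the unique accumulation point cannot simultaneously be infinite near $C$ and stay off $C$; this is a countable intersection of $G_\delta$-sets, hence $G_\delta$. Writing $F=\bigcup_k F_k$ with $F_k$ closed and increasing and summing the resulting estimates over $k$ yields $\{A'\subset F\}\in G_{\delta\sigma}$, which is (4); the dual analysis of the finitely many ``well-separated'' isolated points (those at a fixed positive distance from the clusters), organised over all scales $\e$, yields $\{A\setminus A'\subset F\}\in F_{\sigma\delta}$, which is (3). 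Part (5) is then purely formal: $\A_{=n}(F)=\A_{=n}(F,X)\cap\A_{=n}(X,F)$ is the intersection of an $F_{\sigma\delta}$-set $P$ with a $G_{\delta\sigma}$-set $S$, and $P\cap S=P\setminus(P\setminus S)$ with $P\setminus S$ again $F_{\sigma\delta}$, so $\A_{=n}(F)$ is of difference type $D_2(F_{\sigma\delta})$.

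Finally, (6) is obtained by stacking the strata $\A_{=k}$ along the increasing chain of $F_{\sigma\delta}$-sets $\{\card(A')\le 1\}\subset\cdots\subset\{\card(A')\le n\}$ supplied by the localization lemma: each of $\A_n(F,X)$, $\A_n(X,F)$, $\A_n(F)$ respects this filtration and contributes, level by level, a single $D_2(F_{\sigma\delta})$-difference, so that the whole set takes the form $\bigcup_{k=1}^{n}(A_{2k}\setminus A_{2k-1})$ for an increasing tower of $F_{\sigma\delta}$-sets, i.e.\ it is of type $D_{2n}(F_{\sigma\delta})$. The main obstacle is exactly the one isolated in the previous paragraph: controlling the discontinuous operator $A\mapsto A'$ on $\A_{=n}(X)$ and proving the one-class drop for $n\ge 2$, where the transparent single-cluster argument must be upgraded by separating the $n$ clusters into regions with pairwise disjoint closures and carefully handling the behaviour of $A$ on the boundaries of the approximating neighborhoods; once these local lemmas are in place, (5) and (6) are routine manipulations within the Borel and difference hierarchies.
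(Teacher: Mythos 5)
Your treatment of (1) and (2) is correct and, unlike the paper (which invokes the Kuratowski and Cenzer--Mauldin results on the Borel class of the derived-set operator), is essentially self-contained: the localization $\{A:\card(A'\cap V)\ge j\}=\bigcup\bigcap_{i}\{A:A\cap B_i\text{ infinite}\}$ over tuples of basic balls with pairwise disjoint closures in $V$ is valid and does yield the $F_{\sigma\delta}$ and $F_{\sigma\delta\sigma}$ bounds (it presupposes a countable base, but so, implicitly, does the paper). Part (5) as a formal consequence of (3) and (4) is exactly the paper's argument, and your $n=1$ computation for (4) is sound.

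The gap is in (3) and (4) for $n\ge 2$ --- which is where the content of the theorem lies --- and you have in effect conceded it by deferring the ``local lemmas''. The upgrade you propose (``separate the $n$ clusters into regions with pairwise disjoint closures'') does not deliver the claimed classes: to assert that all $n$ accumulation points lie within $\e$ of a closed set you must existentially quantify over an $n$-tuple of separating balls \emph{and then} intersect over the scales $\e$, and since the tuple genuinely depends on $\e$, the union sits inside the intersection and you land in $G_{\delta\sigma\delta}$ rather than $G_{\delta\sigma}$ (and one class too high in (3) as well); the single-cluster case is misleadingly easy precisely because no tuple has to be chosen. The paper resolves this with explicit formulas whose quantifier bookkeeping is the real work: for (4), with $D$ countable dense and $F=\bigcup_kF_k$,
$$\A_{=n}(X)\setminus\A_{=n}(X,F)=\bigcap_{k\in\N}\;\bigcup_{A\in[D]^{<n}}\;\bigcup_{m\in\N}\bigl\{K:\card\bigl(K\cap B(F_k,\tfrac1m)\setminus B[A,\tfrac1k]\bigr)\le m\bigr\},$$
where the innermost sets are \emph{closed} (a cardinality bound on the trace over an open region is a closed condition), the radius of the balls around the approximating set $A$ is tied to the index $k$ of the \emph{outer} intersection, and only $|A|<n$ approximating points are permitted --- this last restriction is what exploits $\card(K')=n$ and keeps the whole set $F_{\sigma\delta}$; a dual formula with $|A|\le n$ handles (3). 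Nothing in your sketch produces this arrangement, and (6) inherits the gap; moreover, even granted (3)--(5), assembling the $D_{2n}(F_{\sigma\delta})$ tower in $\K(X)$ requires sandwiching the chosen $F_{\sigma\delta}$-extensions as $\A_{k-1}(X)\subseteq S_{2k}\subseteq\A_k(X)$ so that the sequence is actually increasing, a step you pass over as ``routine''.
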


\begin{proof}

K. Kuratowski proved in~\cite{Ku} that the derived set map
$$\mathbf D: \mathcal K(X)\setminus \F(X)\to \mathcal K(X),\qquad \bold D(A)= A'$$
is Borel of the second class. Actually, it is convenient to consider $\bold D$  as a map from the whole $\mathcal K(X)$ to the space $\mathcal K(X)\cup \{\emptyset\}$ with the isolated point $ \{\emptyset\}$ and then a direct proof in~\cite{CM} of  Kuratowski's theorem shows that  the preimage under $\bold D$ of each closed set in $\mathcal K(X)$ is $F_{\sigma\delta}$ in $\mathcal K(X)$.

Observe that
$$\mathcal A_n(X)=\bold D^{-1}(\F_n(X))\quad\text{ and}\quad \mathcal A_n(X,F)=\bold D^{-1}(F)\cap  \mathcal A_n(X)$$ which establishes (1)  and yields (2).

\

Now,
fix a metric $d$ generating the topology of $X$. For $x\in X$ and $\e>0$ denote by
$$B(x,\e)=\{y\in X:d(y,x)< \e\}\quad\mbox{and}\quad B[x,\e]=\{y\in X:d(y,x)\le \e\}$$  the open and closed $\e$-balls centered at $x$. For  $A\subseteq X$, let $$B(A,\e)=\bigcup_{a\in A}B(a,\e)\quad\mbox{and}\quad B[A,\e]=\bigcup_{a\in A}B[a,\e].$$ Fix a countable dense set $D$ in $X$ and $n\in\N$.

\

(3). The equality
$$
\A_{=n}(F,X)=\bigcap_{m\in\N}\bigcup_{A\in[D]^{\le n}}\bigcup_{k\in\N}\{K\in \A_{=n}(X):K\setminus B[A,\tfrac1m]\in [F_k]^{\le k}\}$$
witnesses that the sets $\A_{=n}(F,X)$ is of type $F_{\sigma\delta}$ in $\A_{=n}(X)$.

\

(4). The equality
\begin{multline*}
\A_{=n}(X)\setminus \A_{=n}(X,F)=\\
\bigcap_{k\in\N}\bigcup_{A\in [D]^{<n}}\bigcup_{m\in\N}\{K\in\A_{=n}(X):K\cap B(F_k,\tfrac1m)\setminus B[A,\tfrac1k]\in [X]^{\le m}\}
\end{multline*}
shows that the set $\A_{=n}(X)\setminus \A_{=n}(X,F)$
is of type $F_{\sigma\delta}$ in $\A_{=n}(X)$ and hence $\A_{=n}(X,F)$ is of type $G_{\delta\sigma}$ in $\A_{=n}(X)$.

\

(5). Since $\A_{=n}(F)=\A_{=n}(F,X)\cap\A_{=n}(X,F)$, the set $\A_{=n}(F)$ is of type $D_2(F_{\sigma\delta})$ in $\A_{=n}(X)$ according to the preceding two statements.

\

(6). For every $k\in\N$, choose an $F_{\sigma\delta}$-set $S_{2k}$ in $\K(X)$ such that
$$S_{2k}\cap \A_{=k}(X)=\A_{=k}(F,X).$$
Since $\A_k(X)$ and $\A_{k-1}(X)$ are $F_{\sigma\delta}$-sets in $\K(X)$, we can assume that $\A_{k-1}(X)\subseteq S_{2k}\subseteq \A_k(X)$. Put $S_1=\A_0(X)\setminus\A_0(F)$ and $S_{2k-1}=\A_{k-1}(X)$ for $k>1$. Since $\A_n(F,X)=\bigcup_{k=1}^n(S_{2k}\setminus S_{2k-1})$, the set $\A_n(F,X)$ is of type $D_{2n}(F_{\sigma\delta})$ in $\K(X)$.

For every $k>1$ put $T_{2k}=\A_{k}(X)$ and choose an $F_{\sigma\delta}$-set $T_{2k-1}$ in $\K(X)$ such that
$$T_{2k-1}\cap \A_{=k}(X)=\A_{=k}(X)\setminus \A_{=k}(X,F).$$
 Since $\A_k(X)$ and $\A_{k-1}(X)$ are $F_{\sigma\delta}$-sets in $\K(X)$, we can assume that $\A_{k-1}(X)\subseteq T_{2k-1}\subseteq \A_k(X)$.  Put also
 $$T_2=\A_1(X),\quad T_1=\A_1(X)\setminus \A_1(X,F)=\A_{=1}(X)\setminus\A_{=1}(X,F).$$
 Since
$$\A_n(X,F)=\bigcup_{k=1}^{n}(T_{2k}\setminus T_{2k-1}),$$ the set $\A_n(X,F)$ is of type $D_{2n}(F_{\sigma\delta})$ in $\K(X)$.

\

 Since $\A_{=1}(X)=\A_1(X)$ is of type $F_{\sigma\delta}$ in $\K(X)$,  the set $\A_{=1}(F,X)\setminus\A_{=1}(X,F)$ is of type $F_{\sigma\delta}$ in $\A_{=1}(X)$ (by  statements (3) and (4)) and hence in $\K(X)$. It means that the set $$Q_1=\A_{1}(F,X)\setminus \A_1(F)=\A_{=1}(F,X)\setminus\A_{=1}(X,F)$$ is of type $F_{\sigma\delta}$ in $\K(X)$.
  For $k>1$, let $Q_{2k-1}=S_{2k}\cap T_{2k-1}$.

 Since
$$\A_n(F)=\bigcup_{k=1}^n(S_{2k}\setminus Q_{2k-1}),$$ the set $\A_n(F)$ is of type $D_{2n}(F_{\sigma\delta})$ in $\K(X)$.
\end{proof}

\begin{corollary}\label{Polish}
If $X$ is a  Polish space then the Borel classes of sets in (1), (2), (6) of Theorem~\ref{p'} are absolute.
\end{corollary}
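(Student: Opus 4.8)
The plan is to reduce the whole statement to two standard ingredients: the Polishness of the hyperspace $\mathcal K(X)$ for Polish $X$, and the classical transfer principle which turns a \emph{relative} Borel class over a Polish ambient space into an \emph{absolute} one. First I would recall that for Polish $X$ the Vietoris hyperspace $\mathcal K(X)$ is again Polish (separability and completeness of $X$ pass to $\mathcal K(X)$ under the Hausdorff metric), so $\mathcal K(X)$ is an absolute $G_\delta$, i.e. a $\mathbf{\Pi}^0_2$-space. Fixing an embedding into a metrizable compactification, say $\mathcal K(X)\hookrightarrow \I^\w$, this means $\mathcal K(X)$ is $\mathbf{\Pi}^0_2$ in $\I^\w$.

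The second step invokes the elementary closure fact that, for $\alpha\ge 2$, the intersection of a set of multiplicative class $\mathbf{\Pi}^0_\alpha$ with a $\mathbf{\Pi}^0_2$-set is again of class $\mathbf{\Pi}^0_\alpha$ (and likewise with $\mathbf{\Sigma}^0_\alpha$). Concretely, if $Y\subseteq\mathcal K(X)$ is $F_{\sigma\delta}=\mathbf{\Pi}^0_3$ in $\mathcal K(X)$, I write $Y=\hat Y\cap\mathcal K(X)$ for some $\hat Y\in\mathbf{\Pi}^0_3(\I^\w)$; since $\mathcal K(X)\in\mathbf{\Pi}^0_2(\I^\w)$ we obtain $Y=\hat Y\cap\mathcal K(X)\in\mathbf{\Pi}^0_3(\I^\w)$. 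As $Y$ is of class $\mathbf{\Pi}^0_3$ in a metrizable compactification, $Y$ is an absolute $F_{\sigma\delta}$-space, which settles statement (1). Running the identical computation with $\mathbf{\Sigma}^0_4=F_{\sigma\delta\sigma}$ in place of $\mathbf{\Pi}^0_3$ settles statement (2); this is the classical absoluteness criterion for Borel classes over a Polish space (cf. \cite{Ke}).

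For statement (6) the relevant class is the difference type $D_{2n}(F_{\sigma\delta})$, and here the only extra point is that the increasing structure transfers. If $Y=\bigcup_{k=1}^{n}(A_{2k}\setminus A_{2k-1})$ with $A_1\subseteq\cdots\subseteq A_{2n}$ an increasing chain of $F_{\sigma\delta}$-sets in $\mathcal K(X)$, then by the previous step each $A_j$, viewed as a subset of $\I^\w$, already lies in $\mathbf{\Pi}^0_3(\I^\w)$, and these sets remain increasing inside $\I^\w$. Hence the very same formula $Y=\bigcup_{k=1}^{n}(A_{2k}\setminus A_{2k-1})$ exhibits $Y$ as a set of class $D_{2n}(\mathbf{\Pi}^0_3)$ in the compactification $\I^\w$, so $Y$ is of absolute difference type $D_{2n}(F_{\sigma\delta})$. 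Applying this to $\mathcal A_n(F,X)$, $\mathcal A_n(X,F)$ and $\mathcal A_n(F)$ gives (6).

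I expect no genuine obstacle; the real content is the single hypothesis that the ambient hyperspace be Polish, which is exactly where the dividing line between the included and excluded statements sits. Statements (1), (2), (6) classify their sets inside $\mathcal K(X)$, which is Polish, so the transfer applies verbatim. By contrast, (3)--(5) are relative classifications inside $\mathcal A_{=n}(X)$, a difference of $F_{\sigma\delta}$-sets that need not be $\mathbf{\Pi}^0_2$; over such a non-Polish ambient space the relative Borel class need not coincide with the absolute one, and that is precisely why the corollary is formulated only for (1), (2) and (6). Thus the entire proof is an application of the Polishness of $\mathcal K(X)$ together with the closure of the multiplicative, additive and difference Borel classes under intersection with a $G_\delta$-set.
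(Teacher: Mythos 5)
Your argument is correct and is exactly the standard reasoning the paper relies on: the corollary is stated without proof precisely because, once $\mathcal K(X)$ is known to be Polish, the relative classes $\mathbf\Pi^0_3$, $\mathbf\Sigma^0_4$ and $D_{2n}(\mathbf\Pi^0_3)$ transfer to absolute ones by intersecting with the $G_\delta$ hull in a compactification, just as you describe. (The only nitpick is your parenthetical claim that $\mathbf\Sigma^0_\alpha$ is preserved under intersection with a $\mathbf\Pi^0_2$-set for all $\alpha\ge 2$ -- this fails at $\alpha=2$ -- but you only use it at level $4$, where it holds.)
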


In order to evaluate Borel classes of $\mathcal A_n(X)$ and $\mathcal A_\omega(X)$ from below, we will exploit the standard sets $P_3$ and $S_4$ which are $\mathbf \Pi^0_3$-complete and $\mathbf \Sigma^0_4$-complete, respectively~\cite[Exercise (23.1), Exercise (23.6)]{Ke}. They can be represented in slightly different from~\cite{Ke} but equivalent forms:
\begin{itemize}
\item
$P_3=\{(x_i)_{i\in\N}\in \{0,1\}^\N:\forall j\in\w\;\forall^\infty k\in\w\;\;(x_{2^j(2k+1)}=0)\}$
\item
$S_4=\{(x_i)_{i\in\N}\in \{0,1\}^\N:\forall ^\infty j\in\w\forall ^\infty k\in\w \, \;\;(x_{2^j(2k+1)}=0)\}$.
\end{itemize}

\

\begin{theorem}\label{p1}
  If $X$ contains a compactum  of the Cantor-Bendixson rank $3$, then  $\mathcal A_n(X)$ is not $G_{\delta\sigma}$ in $\mathcal K(X)$ and $\mathcal A_\omega(X)$ is not $G_{\delta\sigma\delta}$ in $\mathcal K(X)$. If  $X$ is dense-in-itself then
 \begin{enumerate}
 \item
 $\mathcal A_n(X)$ and $\mathcal A_\omega(X)$ are nowhere  $G_{\delta\sigma}$ and nowhere $G_{\delta\sigma\delta}$ in $\mathcal K(X)$, respectively,

\item  If $F\in \mathcal K(X)$ and $|F|\ge n$ then $\mathcal A_n(X,F)$ is  nowhere  $G_{\delta\sigma}$ in $\K(X)_F$
\item
 If $F\in \mathcal K(X)$  is infinite  then  $\mathcal A_{\w}(X,F)$ is  nowhere  $G_{\delta\sigma\delta}$ in $\K(X)_F$.
\end{enumerate}
\end{theorem}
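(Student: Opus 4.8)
The statement to be proved combines a lower‑bound estimate ("contains a compactum of Cantor–Bendixson rank $3$" forces non‑$G_{\delta\sigma}$, resp. non‑$G_{\delta\sigma\delta}$) with a local strengthening ("nowhere") in the dense‑in‑itself case, including the relativized versions (2) and (3). Let me think about how I would organize this.

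Let me understand the structure. The upper bounds are already in hand: Theorem \ref{p'} gives $\mathcal A_n(X)$ of type $F_{\sigma\delta}$ (which is $\mathbf\Pi^0_3$) and $\mathcal A_\omega(X)$ of type $F_{\sigma\delta\sigma}$ (which is $\mathbf\Sigma^0_4$). The claim is that these are *sharp*: $\mathcal A_n(X)$ is not $G_{\delta\sigma}=\mathbf\Sigma^0_3$ and $\mathcal A_\omega(X)$ is not $G_{\delta\sigma\delta}=\mathbf\Pi^0_4$. The natural tool is Wadge‑style reduction: exhibit a continuous map that reduces the standard complete set $P_3$ (which is $\mathbf\Pi^0_3$‑complete, hence not $\mathbf\Sigma^0_3$) into $\mathcal A_n(X)$, and $S_4$ (which is $\mathbf\Sigma^0_4$‑complete, hence not $\mathbf\Pi^0_4$) into $\mathcal A_\omega(X)$. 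If such a continuous reduction $f\colon\{0,1\}^\N\to\mathcal K(X)$ with $f^{-1}(\mathcal A_n(X))=P_3$ exists, then $\mathcal A_n(X)$ cannot be $\mathbf\Sigma^0_3$, for otherwise $P_3$ would be $\mathbf\Sigma^0_3$, contradicting its completeness.

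**Construction of the reduction.** The main work is combinatorial‑topological: encoding the two‑coordinate indexing $i=2^j(2k+1)$ of $P_3$ and $S_4$ into the derived‑set structure of a compactum. My plan is to first fix a compactum $C\subset X$ of Cantor–Bendixson rank $3$; such a $C$ has a point $z$ with $z\in C''$ (a third‑level accumulation point), so $C$ carries a fan of convergent sequences of convergent sequences. I would lay out a doubly‑indexed discrete family of points $\{p_{j,k}\}_{j,k\in\w}$ in $X$ arranged so that, for each fixed $j$, the sequence $(p_{j,k})_k$ converges to a point $q_j$, and the $q_j$ converge to a single point $q_\infty$; this is exactly the rank‑$3$ picture, and it can be extracted from $C$ by choosing the $p_{j,k}$ among isolated points of successive Cantor–Bendixson levels. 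Then, given $x=(x_i)\in\{0,1\}^\N$, define $f(x)$ to be the closure of the set of those $p_{j,k}$ with $x_{2^j(2k+1)}=1$, together with whatever limit points are forced. The definition must be arranged so that $q_j\in f(x)'$ exactly when infinitely many $x_{2^j(2k+1)}$ equal $1$, and $q_\infty\in f(x)''$ exactly when this happens for infinitely many $j$. Continuity of $f$ in the Vietoris topology is the routine part; the delicate part is ensuring that the number of accumulation points of $f(x)$ is controlled — finite and $\le n$ precisely on $P_3$, and finite (any number $<\omega$) precisely on $S_4$.

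**The hard part and the "nowhere" upgrade.** The main obstacle is two‑fold. First, matching the cardinality bound $|A'|\le n$ for $\mathcal A_n(X)$ rather than merely $|A'|<\omega$: the $P_3$ condition says that for *each* $j$ only finitely many odd‑indexed coordinates survive, so each $q_j\notin f(x)'$, forcing $f(x)'\subseteq\{q_\infty\}$ or controlled by finitely many extra "padding" limit points that I must insert to realize exactly the threshold $n$; I expect to attach $n-1$ fixed isolated convergent sequences to guarantee $|f(x)'|$ lands in the right range. For $\mathcal A_\omega(X)$ the bound is just finiteness, matching $S_4$ cleanly. Second, the \emph{nowhere} statements (1)–(3) require that the reduction can be performed inside *every* nonempty relatively open subset of the hyperspace. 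The strategy here is standard for absorbing‑set arguments: using that $X$ is dense‑in‑itself, any basic Vietoris neighborhood $\langle U_1,\dots,U_r\rangle$ of a compactum contains room to embed the whole rank‑$3$ construction on a tiny scale inside one of the $U_i$, while keeping a fixed finite "skeleton" that pins down the remaining accumulation‑point budget. I would therefore localize the reduction $f$ so that its image lies in a prescribed open set, which propagates the global non‑$\mathbf\Sigma^0_3$ (resp. non‑$\mathbf\Pi^0_4$) obstruction to every open piece; for the relativized versions (2) and (3) I restrict the construction so that all accumulation points are placed inside $F$ (possible because $|F|\ge n$, resp. $F$ infinite), and carry it out inside an arbitrary neighborhood in $\mathcal K(X)_F$. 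The first obstacle — getting the accumulation‑point count to hit exactly the window the $P_3$/$S_4$ definitions demand — is where I expect the real care to be needed; everything else is continuity bookkeeping and a routine localization.
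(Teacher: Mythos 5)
Your outline is the paper's argument: reduce the complete sets $P_3$ and $S_4$ continuously into $\mathcal A_n(X)$ and $\mathcal A_\omega(X)$ via a rank-$3$ fan $p_{j,k}\to q_j\to q_\infty$ extracted from the given compactum, pad with $n-1$ fixed convergent sequences (the paper's $\chi(n)$) to hit the threshold exactly, obtain ``nowhere'' by re-running the reduction inside a basic Vietoris set $\langle U_1,\dots,U_k\rangle$ after adjoining one point from each $U_i$ (finitely many isolated points, so the derived set is unchanged), and for (2)--(3) place the limit points inside $F$, using $|F|\ge n$ (resp.\ $F$ infinite).

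There is, however, one genuine slip, and it sits exactly where you declare the argument ``routine''. The map $f(x)=\cl\bigl(\{p_{j,k}:x_{2^j(2k+1)}=1\}\bigr)$, plus whatever limits are forced, is \emph{not} continuous for the Vietoris topology: take $x$ with all coordinates $0$, so that $f(x)$ is just the padding, and let $y^{(k)}$ agree with $x$ except that $y^{(k)}_{2^{j_0}(2k+1)}=1$ for a fixed $j_0$. Then $y^{(k)}\to x$, but $f(y^{(k)})=f(x)\cup\{p_{j_0,k}\}$ and $d_H\bigl(f(y^{(k)}),f(x)\bigr)\ge d\bigl(p_{j_0,k},f(x)\bigr)\to d\bigl(q_{j_0},f(x)\bigr)>0$. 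The cure --- the one small idea the construction needs --- is to keep the entire skeleton $\{q_j\}_{j}\cup\{q_\infty\}$ in $f(x)$ unconditionally; the paper does this by using the points $2^{-j}+2^{-(j+k)}x_{2^{j-n}(2k+1)}$, which \emph{slide} between $q_j=2^{-j}$ (coordinate $0$) and $p_{j,k}=2^{-j}+2^{-(j+k)}$ (coordinate $1$) instead of appearing and disappearing. Flipping the coordinate $2^{j-n}(2k+1)$ then displaces a single point by $2^{-(j+k)}$, which gives continuity, and it also disposes of what you call the delicate part for free: $f(x)'$ always contains $q_\infty$ together with the $n-1$ padding limits, and contains $q_j$ (for $j\ge n$) if and only if $x_{2^{j-n}(2k+1)}=1$ for infinitely many $k$, so $f^{-1}(\mathcal A_n(X))=P_3$ and (for $n=1$, with empty padding) $f^{-1}(\mathcal A_\omega(X))=S_4$ on the nose. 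With that modification your plan goes through verbatim.
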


\begin{proof}

Let the Cantor-Bendixson rank of some $A\in \mathcal K(X)$ equals $3$. Without loss of generality we can assume that
$A= \cl\bigl\{ 2^{-j}+{2^{-(j+k)}}: j,k\in \w \bigr\}.$
For any $n\in\N$,  put
\begin{equation}\label{e:chi}
\chi(n)=\left\{
                \begin{array}{ll}
                  \cl\bigl\{2^{-j}+2 ^{-(j+k)}: 1\le j\le n-1, k\in\w\bigr\}, & \hbox{\text{if $n>1$};} \\
                  \emptyset, & \hbox{\text{if $n=1$}.}
                \end{array}
              \right.
\end{equation}
Define a continuous map $\psi_n:  \{0,1\}^{\N}\to \mathcal K(X)$ by
\begin{equation*}
    \psi_n(x)=  \chi(n) \cup
\cl\bigl(\bigl\{2^{-j}+2^{-(j+k)}x_{2^{j-n}(2k+1)}:j\ge n, k\in\w\bigr\}\bigr)\subset A.
    \end{equation*}
 One  easily checks that $$\psi_n^{-1}(\mathcal A_n(X))=P_3\quad\text{and}\quad \psi_1^{-1}(\mathcal A_\omega(X))=S_4.$$
  This guarantees that $\mathcal A_n(X)$ is not $G_{\delta\sigma}$ and $\mathcal A_\omega(X)$ is not $G_{\delta\sigma\delta}$.

\

Now, assume that $X$ has no isolated points. Then every nonempty open subset of $X$  contains a copy of $A$. Consider a basic open set $\mathcal U$ in the Vietoris topology in $\mathcal K(X)$:
 $$\mathcal U= \langle U_1,\dots,U_k\rangle =\{K\in \mathcal K(X): K\subset \bigcup_{i=1}^k U_i,\, (\forall i) \, U_i\cap K\neq\emptyset\}, $$
where $U_i$'s are open  subsets of $X$ and pick points $u_i\in U_i$ for $i=1,\dots, k$.

(1).
Assume, without loss of generality, that $A\subset U_1$. Then
$$A\cup \{u_1,\dots,u_k\}\in \mathcal U $$
and the map $$\widetilde{\psi_n}:  \{0,1\}^{\N}\to \mathcal U,\quad \widetilde{\psi_n}(x)=\psi_n(x)\cup \{u_1,\dots,u_k\}$$  satisfies
$$(\widetilde{\psi_n})^{-1}(\mathcal A_n(X)\cap \mathcal U)=P_3\quad\text{and}\quad (\widetilde{\psi_1})^{-1}(\mathcal A_\omega(X)\cap \mathcal U)=S_4$$
 which completes the proof of (1).

(2) and (3).  Let $\mathcal U_F= \langle U_1,\dots,U_k\rangle \cap \K(X)_F$.  We can assume that  $A\subset \bigcup_{i=1}^k U_i$ and  $\{0\}\cup\{2^{-j}:j=1,\dots,n-1\}\subset F$  in case (2) and $\{0\}\cup\{2^{-j}:j\in\N\}\subset F$ in case (3).  Then
$$(\widetilde{\psi_n})^{-1}(\mathcal A_n(X,F)\cap \mathcal U_F)=P_3\quad\text{and}\quad (\widetilde{\psi_1})^{-1}(\mathcal A_\w(X,F)\cap \mathcal U_F)=S_4$$ in respective cases.

\end{proof}

The following general fact can also be observed.

\begin{proposition}\label{anal}
Let $\mathbf\Gamma$ be any absolute Borel class containing class $\mathbf\Sigma^0_2$  or any projective class.
If an uncountable  metrizable separable space $X$ is   not in $\mathbf \Gamma$, then $\mathcal A_n(X)$ is not in $\mathbf\Gamma$ for each $n\in\mathbb N\cup\{\omega\}$.
\end{proposition}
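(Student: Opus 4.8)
The plan is to realize a co-countable piece of $X$ as a locally closed subspace of $\A_n(X)$ and then invoke the permanence properties of the class $\mathbf\Gamma$. Since $X$ is uncountable, separable and metric, it possesses a condensation point and hence a nontrivial convergent sequence; fix a countable compactum $C\subset X$ with exactly one accumulation point, i.e. a convergent sequence together with its limit, so that $|C'|=1$. Because $\mathbf\Gamma$ contains $\mathbf\Sigma^0_2$ and is closed under finite unions, the decomposition $X=(X\setminus C)\cup C$ with $C$ countable shows that $X\notin\mathbf\Gamma$ forces $X\setminus C\notin\mathbf\Gamma$. It therefore suffices to exhibit $X\setminus C$ as a $\mathbf\Gamma$-inherited subspace of $\A_n(X)$.

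First I would define $e\colon X\setminus C\to\K(X)$ by $e(x)=\{x\}\cup C$. The map $x\mapsto\{x\}$ is an embedding of $X$ into $\K(X)$ and $A\mapsto A\cup C$ is continuous, so $e$ is continuous; it is injective on $X\setminus C$ since $e(x)\setminus C=\{x\}$, and its inverse is continuous because, whenever $e(x_m)\to e(x)$ in the Hausdorff metric, the estimate $\dist(x,e(x_m))\to0$ together with $\dist(x,C)>0$ gives $x_m\to x$. For $x\notin C$ the point $x$ is isolated in $\{x\}\cup C$ (as $x\notin\cl(C\setminus\{x\})=C$), whence $(\{x\}\cup C)'=C'$ and $|e(x)'|=1$; thus $E:=e(X\setminus C)\subset\A_{=1}(X)\subset\A_n(X)$ for every $n\in\N\cup\{\w\}$, and $E\cong X\setminus C$.

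Next I would identify the closure of $E$ in $\K(X)$. If $e(x_m)=\{x_m\}\cup C\to A$, then $C\subseteq A$, and for any $a\in A\setminus C$ one has $\dist(a,C)>0$, so $x_m\to a$; as a sequence has at most one limit, $|A\setminus C|\le1$, giving $A\in E$ or $A=C$. Hence $\cl E\subseteq E\cup\{C\}$, so $\cl E$ is closed in $\K(X)$, is contained in $\A_n(X)$ (because $C\in\A_{=1}(X)$), and $E=\cl E\setminus\{C\}$ is open in $\cl E$. Consequently $E$ is locally closed in $\A_n(X)$, namely the difference of the two closed sets $\cl E$ and $\{C\}$.

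Finally comes the bookkeeping with $\mathbf\Gamma$. Every class under consideration---an absolute additive or multiplicative Borel class containing $\mathbf\Sigma^0_2$, or a projective class---is closed under passing to closed subspaces, to open subspaces, and under finite unions with members of $\mathbf\Sigma^0_2$. If $\A_n(X)\in\mathbf\Gamma$, then the closed subspace $\cl E\in\mathbf\Gamma$, its open subspace $E\in\mathbf\Gamma$, hence $X\setminus C\cong E\in\mathbf\Gamma$, and finally $X=(X\setminus C)\cup C\in\mathbf\Gamma$, contradicting $X\notin\mathbf\Gamma$. I expect the only real obstacle to be the uniform verification of these three permanence properties across all admissible $\mathbf\Gamma$, most transparently handled by embedding everything into a common Polish superspace and using that open sets are $\mathbf\Sigma^0_1$ and countable sets are $\mathbf\Sigma^0_2$, both absorbed by $\mathbf\Gamma$; the hyperspace geometry, i.e. the computation $(\{x\}\cup C)'=C'$ and $\cl E\subseteq E\cup\{C\}$, is elementary once the right $C$ is fixed.
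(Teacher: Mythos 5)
Your proposal is correct and follows essentially the same route as the paper: the paper also fixes a countable compactum $K\in\A_n(X)$, uses the map $x\mapsto\{x\}\cup K$ to realize $X\setminus K$ as a closed-minus-a-point (hence locally closed) subspace of $\A_n(X)$, and then applies the same permanence properties of $\mathbf\Gamma$ to derive a contradiction. Your version merely makes explicit a few details the paper leaves implicit (the closure computation $\cl E\subseteq E\cup\{C\}$ and the choice of $C$ as a convergent sequence so that it lies in $\A_{=1}(X)$ for every $n$).
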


\begin{proof}
Suppose $\A_n(X)$ belongs to $\mathbf\Gamma$. Since $X$ is uncountable, it contains infinitely many accumulation points. So, we can find $K\in \A_n(X)$.  Consider the continuous map
$$\delta: X\to \A_n(X),\quad \delta(x)=\{x\}\cup K$$
and observe that the image $\delta(X)$ is closed in $\A_n(X)$, hence it is in $\mathbf\Gamma$. Then    $\delta(X)\setminus \{K\}$ belongs also to $\mathbf\Gamma$.
Since $\delta\upharpoonright_{ X\setminus K} : X\setminus K \to \delta(X)\setminus \{K\}$ is a homeomorphism, the space $X\setminus K$ is in $\mathbf\Gamma$ and so is the space $X=(X\setminus K)\cup K$, a contradiction.

\end{proof}

\section{Hyperspaces $\mathcal A_n(X)$ for 0-dimensional $X$}

In this section, we  characterize hyperspaces $\mathcal A_n(X),$ $n\le \w$, for 0-dimensional Polish or $\sigma$-compact  spaces without isolated points.

\begin{lemma}\label{l:P}
$P_3\cong\Q^\w$. In particular, $P_3$ is of the first category (in itself) and nowhere $G_{\delta\sigma}$.
\end{lemma}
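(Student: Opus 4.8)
The plan is to exhibit $P_3$ directly as a countable product of copies of $\Q$, so that $P_3\cong\Q^\w$ by definition. The key observation is that the assignment $(j,k)\mapsto 2^j(2k+1)$ is a bijection of $\w\times\w$ onto the set of positive integers, so the coordinates of $\{0,1\}^{\N}$ split into pairwise disjoint blocks $B_j=\{2^j(2k+1):k\in\w\}$, one for each $j\in\w$. This gives a canonical homeomorphism $\{0,1\}^{\N}\cong\prod_{j\in\w}\{0,1\}^{B_j}$, under which a point $x$ lies in $P_3$ exactly when, for every $j$, its $j$-th block $(x_{2^j(2k+1)})_{k\in\w}$ is \emph{eventually zero}. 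Writing $E=\{y\in\{0,1\}^{\w}:\forall^{\infty}k\ y_k=0\}$ for the set of eventually-zero sequences, this says precisely that $P_3=\prod_{j\in\w}E$ inside $\prod_{j\in\w}\{0,1\}^{B_j}$. (Should one take $0\in\N$, the unconstrained coordinate $x_0$ contributes only a two-point clopen factor, which is harmless since $\Q^\w\times\{0,1\}\cong\Q^\w$.)

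First I would verify the two elementary facts feeding this decomposition. The disjointness of the blocks $B_j$ — just uniqueness of the $2$-adic valuation — guarantees both that the product topology on $\prod_j\{0,1\}^{B_j}$ agrees with that of $\{0,1\}^{\N}$, and that the defining condition of $P_3$, being a conjunction over $j$ of conditions each involving only the coordinates in $B_j$, factors as $\prod_j E$. Next, $E$ is a countable (its points are the finitely supported $0$-$1$ sequences) dense-in-itself metrizable space: no point is isolated, since a basic neighbourhood fixes only finitely many coordinates and one may switch a later coordinate on and off while remaining in $E$. By Sierpi\'nski's characterization of the rationals, $E\cong\Q$, whence $P_3=\prod_{j\in\w}E\cong\Q^\w$. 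This portion is routine and presents no real difficulty.

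For the concluding assertions I would argue as follows. That $\Q^\w$ is of the first category in itself is seen through the projection $\pi_1\colon\Q^\w\to\Q$ onto the first factor: writing $\Q=\{q_m:m\in\w\}$ gives $\Q^\w=\bigcup_m\pi_1^{-1}(q_m)$, and each fibre $\{q_m\}\times\Q^\w$ is closed with empty interior (no nonempty basic open set can confine the first coordinate to the non-isolated point $q_m$), hence nowhere dense. The more delicate point — the one genuinely requiring input beyond the product structure — is that $P_3$ is \emph{nowhere} absolute $G_{\delta\sigma}$. Here I would combine two ingredients: that $P_3$ is $\mathbf\Pi^0_3$-complete (as cited from~\cite{Ke}), so it is not absolute $G_{\delta\sigma}$ (otherwise $\mathbf\Pi^0_3\subseteq\mathbf\Sigma^0_3$); and the self-similarity of $\Q^\w$, namely that every nonempty basic clopen set is a product of finitely many nonempty clopen subsets of $\Q$ (each again $\cong\Q$) with full factors, hence is itself homeomorphic to $\Q^\w$. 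Since such clopen sets form a base, any nonempty relatively open $W\subseteq P_3$ contains a clopen $B\cong\Q^\w$; were $W$ absolute $G_{\delta\sigma}$, then $B$, being clopen in $W$, would be absolute $G_{\delta\sigma}$ too, contradicting the $\mathbf\Pi^0_3$-completeness of $B\cong\Q^\w$. The only thing to watch in this last step is to invoke completeness at the right place; everything else is formal.
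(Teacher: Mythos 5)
Your argument is correct and is essentially the paper's own proof: the same splitting of the index set into the disjoint blocks $\{2^j(2k+1):k\in\w\}$, with the eventually-zero sequences on each block identified with $\Q$ (Sierpi\'nski's characterization), yielding $P_3\cong\Q^\w$ as a product of homeomorphisms. The paper leaves the ``in particular'' clauses unargued; your verifications of first category and of nowhere $G_{\delta\sigma}$ (via the $\mathbf\Pi^0_3$-completeness of $P_3$ and the self-similarity of $\Q^\w$ under passage to basic clopen sets) are correct and simply supply details the authors omit.
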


\begin{proof}
Represent $\N$ as the countable disjoint union $\N=\bigcup_{n\in \w} N_n,$ where $N_n=\{2^n(2k+1): k\in\w\}$ and let $\pr_{N_n}: \{0,1\}^\N\to \{0,1\}^{N_n}$ be the projection. Since the set $\{x\in\{0,1\}^\N: \forall^\infty i \,(x_i=0)\}$ is homeomorphic to $\Q$, the sets $Z_n=\pr_{N_n}(P_3)$ are also homeomorphic to $\Q$. For each $n\in\w$, fix a homeomorphism $h_n:Z_n\to \Q$ and let a homeomorphism
$h:P_3\to  \Q^\w$ be defined by
$$h(x)=\bigl(h_n(\pr_{N_n}(x))\bigr)_{n\in\w}.$$

\end{proof}

\begin{lemma}\label{l:S4}
 The set $S_4$ is of the first category (in itself) and strongly homogeneous (i.e., every two nonempty clopen subsets are homeomorphic). In particular, $S_4$ is nowhere $G_{\delta\sigma\delta}$.
 \end{lemma}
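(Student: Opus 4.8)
The plan is to prove that $S_4$ is of the first category and strongly homogeneous, following the same product-decomposition philosophy that worked for $P_3$ in Lemma~\ref{l:P}. First I would recall the defining formula
$$S_4=\{(x_i)_{i\in\N}\in \{0,1\}^\N:\forall ^\infty j\in\w\;\forall ^\infty k\in\w\;\;(x_{2^j(2k+1)}=0)\}$$
and use the same coordinate partition $\N=\bigcup_{n\in\w}N_n$ with $N_n=\{2^n(2k+1):k\in\w\}$, so that $\{0,1\}^\N\cong\prod_{n\in\w}\{0,1\}^{N_n}$. Under this identification, the inner quantifier block ``$\forall^\infty k\;(x_{2^j(2k+1)}=0)$'' says that the $j$-th factor's sequence is eventually $0$, i.e.\ lies in the copy of $\Q$ inside $\{0,1\}^{N_j}$; call that subset $Z_j$. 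The outer ``$\forall^\infty j$'' then says that all but finitely many coordinates of the point, viewed in $\prod_j \{0,1\}^{N_j}$, lie in $Z_j$. Thus $S_4$ is homeomorphic to the subspace of $\prod_{n\in\w}\{0,1\}^{N_n}$ consisting of sequences eventually lying in the distinguished subset $Z_n$. This recognition is the key structural step; it exhibits $S_4$ as a ``$\Sigma$-type'' or ``eventually-in-$\Q$'' product, which is exactly the shape handled by van Engelen's theory of homogeneous zero-dimensional Borel spaces.

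Next I would invoke Lemma~\ref{l:Engelen} (cited as~\cite[Lemma 3.1]{E2}), which is precisely the tool the paper has set up for products of this kind. The first-category claim is the easy half: for each $m$, the set of points whose first $m$ coordinates all lie in their $Z_n$ is nowhere dense (since each $Z_n\cong\Q$ is a dense, codimension-something meager subset of $\{0,1\}^{N_n}$, and requiring membership in a copy of $\Q$ in even one factor is nowhere dense), and $S_4$ is contained in a countable union of such sets after accounting for the ``eventually'' clause. So $S_4$ is a countable union of nowhere dense sets in itself, hence first category. I would spell this out by writing $S_4=\bigcup_{m\in\w}\bigcap_{j\ge m}\pr_{N_j}^{-1}(Z_j)$ and checking each $\bigcap_{j\ge m}\pr_{N_j}^{-1}(Z_j)$ is closed with empty interior in $S_4$.

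For strong homogeneity, the heart of the matter is that a basic clopen subset of $S_4$ is obtained by fixing finitely many coordinates in finitely many factors $\{0,1\}^{N_n}$. Because each factor $\{0,1\}^{N_n}\cong\{0,1\}^\w$ is itself strongly homogeneous and its distinguished subset $Z_n\cong\Q$ is ``clopen-homogeneous'' in the appropriate relative sense, a clopen restriction in any finite set of factors yields a space of the same product form: finitely many factors get replaced by clopen pieces homeomorphic to the originals, and the tail $\prod_{n\ge m}$ with the eventually-in-$Z_n$ condition is unchanged up to a finite reindexing. The ``eventually'' structure is what makes finite perturbations invisible up to homeomorphism, so any two nonempty clopen subsets are homeomorphic to $S_4$ itself, hence to each other. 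The last sentence, that $S_4$ is nowhere $G_{\delta\sigma\delta}$, then follows formally: strong homogeneity reduces the nowhere property to the global property, and $S_4$ being true $\mathbf\Pi^0_4$-complete (equivalently, genuinely not $\mathbf\Sigma^0_3=G_{\delta\sigma\delta}$) as recorded just before the statement means no nonempty clopen, hence no nonempty open, subset can be $G_{\delta\sigma\delta}$.

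The main obstacle I anticipate is the verification that clopen restrictions preserve the precise product-with-eventual-condition form, i.e.\ confirming the hypotheses of the van Engelen lemma are met—chiefly that each $Z_n\cong\Q$ sits inside $\{0,1\}^{N_n}$ as a first-category, everywhere-non-$G_\delta$ (nowhere Polish) set in a ``coordinated'' way across all $n$, so the lemma's homogeneity conclusion applies to the infinite product. Getting the bookkeeping of the finite-support clopen sets to line up with a genuine homeomorphism of the tail product, rather than merely a bijection of index sets, is the delicate point; everything else is a routine unwinding of the quantifier structure.
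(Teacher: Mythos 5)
Your overall route is the same as the paper's: decompose $S_4$ along the outer ``eventually'' quantifier and read strong homogeneity off the shift‑invariance (under finite perturbations of coordinates) of the defining formula. The strong‑homogeneity and ``nowhere $G_{\delta\sigma\delta}$'' parts of your sketch are fine — the paper disposes of them just as briefly, and you do not actually need Lemma~\ref{l:Engelen} for any of it. The genuine problem is in the step you offer as the rigorous version of the first‑category claim: the sets $R_m=\bigcap_{j\ge m}\pr_{N_j}^{-1}(Z_j)$ are \emph{not} closed in $S_4$, and the parenthetical assertion that ``requiring membership in a copy of $\Q$ in even one factor is nowhere dense'' is false. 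Each $Z_j$ (the eventually‑zero sequences in $\{0,1\}^{N_j}$) is \emph{dense} in $\{0,1\}^{N_j}$, so every $\pr_{N_j}^{-1}(Z_j)$, and indeed every $R_m$ (it contains all finitely supported points), is dense in $\{0,1\}^{\N}$ and hence dense in $S_4$. A dense proper subset is never closed, and a dense set with empty interior is meager but not nowhere dense; so ``closed with empty interior in $S_4$'' fails for $R_m$, and the decomposition $S_4=\bigcup_m R_m$ does not by itself witness first category.

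The repair is short and uses the one piece of structure your sketch skips: $Z_m\cong\Q$ is $\sigma$-compact, $Z_m=\bigcup_{l}Z_m^l$ where $Z_m^l$ is the finite (hence closed) set of sequences supported on the first $l$ columns of $N_m$. Each $\pr_{N_m}^{-1}(Z_m^l)$ is closed in $\{0,1\}^{\N}$ and has empty interior in $S_4$: any nonempty basic relatively open subset of $S_4$ contains a point whose $m$-th row has a $1$ at a column beyond $l$ and beyond the finitely many fixed coordinates, since a finite modification of a point of $S_4$ stays in $S_4$. Hence each $\pr_{N_m}^{-1}(Z_m^l)$ is nowhere dense in $S_4$. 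Every point of $S_4$ has all but finitely many rows in the corresponding $Z_m$, so $S_4\subseteq\bigcup_{m,l}\pr_{N_m}^{-1}(Z_m^l)$, exhibiting $S_4$ as a countable union of nowhere dense subsets of itself. With that substitution your argument goes through; the remaining bookkeeping for strong homogeneity (re-indexing each cofinite $N_j\setminus E$ onto $N_j$ so that the eventual conditions are preserved) is routine, as you anticipated.
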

 \begin{proof}
 For each $m\in\w$, put $$T_m=\{x\in\{0,1\}^\N: (\exists\,n\ge m)\,(\forall\, r\ge n)\, (\forall^\infty\,k)\,(x_{2^r(2k+1)}=0)\}.$$
 Observe that
 \begin{itemize}
 \item
 $S_4=\bigcup_m T_m$,
 \item
 $T_m\subset T_{m+1}$,
 \item
 $T_m$ is closed in $S_4$,
 \item
 $T_m$ is nowhere dense in $T_{m+1}$.
 \end{itemize}
 It follows that $S_4$ is of the first category.

 The strong homogeneity follows directly from the definition of $S_4$.
 \end{proof}

\

The following  theorem is due to Steel~\cite[Theorem 2]{S} and van Engelen~\cite[Theorem 4.6]{E1}.

\begin{lemma}\label{l:S}
If $\alpha\ge 3$, then any two 0-dimensional, metric separable, first category  spaces from the class $\mathbf\Pi^0_\alpha$ ($\mathbf\Sigma^0_\alpha$) which are nowhere
$\mathbf\Sigma^0_\alpha$ ($\mathbf\Pi^0_\alpha$, resp.) are homeomorphic.
\end{lemma}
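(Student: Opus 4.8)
The plan is to follow the Steel--van Engelen method: realize $X$ and $Y$ as subsets of a fixed zero-dimensional model, reduce the homeomorphism problem to a question about Wadge degrees, and then upgrade a Wadge equivalence to an honest homeomorphism using the category and homogeneity hypotheses. I will only sketch the argument, as the result is known \cite{S,E1}. First I would embed both spaces into the Baire space $\w^\w$: a zero-dimensional, separable, metric space admits such an embedding, and since $X,Y\in\mathbf\Pi^0_\alpha$ we may regard them as $\mathbf\Pi^0_\alpha$-subsets $X,Y\subseteq\w^\w$. For zero-dimensional sets the natural comparison tool is Wadge reducibility: write $A\le_W B$ if $A=f^{-1}(B)$ for some continuous $f\colon\w^\w\to\w^\w$. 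By Borel determinacy and Wadge's Lemma, for any two Borel sets $A,B$ either $A\le_W B$ or $B\le_W\w^\w\setminus A$; consequently the non-self-dual class $\mathbf\Pi^0_\alpha$ carries a $\le_W$-largest degree, and a $\mathbf\Pi^0_\alpha$-set is $\le_W$-complete for $\mathbf\Pi^0_\alpha$ precisely when it fails to be $\mathbf\Sigma^0_\alpha$.

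Next I would show that the hypotheses force $X$ and $Y$ to be mutually Wadge reducible, and in a \emph{local} (everywhere) form. Since $X$ is $\mathbf\Pi^0_\alpha$ but nowhere $\mathbf\Sigma^0_\alpha$, every nonempty relatively clopen piece of $X$ is again properly $\mathbf\Pi^0_\alpha$, hence $\le_W$-complete for $\mathbf\Pi^0_\alpha$; the same holds for $Y$. Thus $X\le_W Y$ and $Y\le_W X$, and moreover the reductions can be arranged on prescribed clopen pieces. This records that $X$ and $Y$ occupy the same Wadge degree and, crucially, are \emph{everywhere} of that degree.

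The heart of the matter, and the main obstacle, is to upgrade this mutual Wadge reducibility into a homeomorphism, since a continuous reduction is very far from being a bijection. Here I would run a back-and-forth construction over a tree of clopen subsets of $X$ and $Y$: at each finite stage one matches a clopen partition of a piece of $X$ with one of $Y$ so that corresponding pieces are again properly $\mathbf\Pi^0_\alpha$ and have diameters tending to $0$. The category hypothesis (first category in itself) and the strong homogeneity implicit in ``nowhere $\mathbf\Sigma^0_\alpha$'' are exactly what keep the matching extendable at every step: meagerness supplies a cover by nowhere-dense closed pieces along which to refine, while everywhere-completeness guarantees that each new clopen piece can be split into subpieces realizing any degree required to continue the match. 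Passing to the limit produces a bijection which is a homeomorphism because diameters shrink to $0$ on both sides. This absorption/back-and-forth step is the technically delicate part; it is van Engelen's refinement of a Knaster--Reichbach-type extension technique, and it invokes determinacy only through the well-foundedness of the Wadge hierarchy on Borel sets.

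Finally, the parenthetical $\mathbf\Sigma^0_\alpha$-statement follows by the very same argument with the roles of $\mathbf\Pi^0_\alpha$ and its dual class $\mathbf\Sigma^0_\alpha$ interchanged, the hypothesis now being that the spaces lie in $\mathbf\Sigma^0_\alpha$ and are nowhere $\mathbf\Pi^0_\alpha$; all the Wadge-theoretic inputs are symmetric in a class and its dual, so no new ideas are needed.
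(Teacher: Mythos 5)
The paper does not actually prove this lemma: it is quoted as a known theorem of Steel \cite[Theorem 2]{S} and van Engelen \cite[Theorem 4.6]{E1}, so there is no in-paper argument to compare against. Your sketch correctly identifies the architecture of the cited proofs: realize the spaces inside a zero-dimensional Polish model, use Borel determinacy and Wadge's Lemma to see that a $\mathbf\Pi^0_\alpha$ set which is not $\mathbf\Sigma^0_\alpha$ is $\le_W$-complete for $\mathbf\Pi^0_\alpha$ (so that ``nowhere $\mathbf\Sigma^0_\alpha$'' becomes ``everywhere properly $\mathbf\Pi^0_\alpha$''), and then run a Knaster--Reichbach-type back-and-forth in which the meager decomposition and everywhere-completeness keep the partial clopen matchings extendable. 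That is the right road map, and the dual $\mathbf\Sigma^0_\alpha$ case is indeed symmetric.

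That said, be aware that as written the proposal proves nothing: the back-and-forth step you flag as ``the heart of the matter'' \emph{is} the theorem, and everything before it (mutual Wadge reducibility) is cheap. If you wanted to actually execute it, three points would need real work. First, before a homeomorphism can be built by back-and-forth you must normalize the ambient spaces: Steel's theorem produces a homeomorphism of a fixed ambient space (e.g.\ $2^\w$) carrying one set onto the other, so you must first arrange $X$ and $Y$ as dense subsets of homeomorphic zero-dimensional Polish spaces; this uses that a nowhere $\mathbf\Sigma^0_\alpha$ space ($\alpha\ge 3$) is nowhere locally compact and crowded. Second, the first-category hypothesis does not merely ``supply a cover to refine along''; it is needed to match the Baire category of $X$ and $Y$ \emph{in the ambient space} (Steel requires both sets meager there, or both comeager), and one must check that first category in itself plus density yields meagerness in the closure. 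Third, the passage between the absolute classes $\mathbf\Pi^0_\alpha$, $\mathbf\Sigma^0_\alpha$ of the statement and the relative pointclasses of subsets of $\w^\w$ used in the Wadge argument, together with the restriction $\alpha\ge 3$ (the case $\alpha=2$ genuinely fails and is why the bound appears), should be made explicit. Since the paper itself treats the lemma as a black box from \cite{S} and \cite{E1}, your sketch is an acceptable pointer to the same proofs, but it should not be mistaken for a proof.
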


It was shown in~\cite{GO} that $\mathcal A_1(X)$ is of the first category for any second countable topological space $X$. We provide a quick argument for this fact valid for any $\mathcal A_n(X)$ in the case of a dense-in-itself,  metric, separable, 0-dimensional space $X$.

\begin{lemma}\label{le}
Let $X$ be a dense-in-itself, metric, separable, 0-dimensional space, $n\le\omega$ and $F\in \mathcal K(X)$ be a set of cardinality $\ge n$.  Then the hyperspaces  $\mathcal A_n(X)$, $\mathcal A_n(X,F)$, $\mathcal A_{\w+1}(X)$ and $\mathcal H(X)$ are of the first category.
\end{lemma}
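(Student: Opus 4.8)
The target is meagerness of each hyperspace \emph{in its own topology}, not merely in the ambient $\K(X)$; the latter does not transfer to the former (a closed line in the plane is meager in $\R^2$ yet Baire in itself). So I would argue intrinsically, producing inside each of $\A_n(X)$, $\A_n(X,F)$, $\A_{\w+1}(X)$ and $\mathcal H(X)$ a countable cover by relatively closed nowhere dense sets. I would first reduce the task by Banach's localization principle: a union of open meager subspaces is meager, so by separability it suffices to show that every $A_0$ in the hyperspace has a neighborhood that is meager in itself. This reduction matters because near a fixed $A_0$ with $\diam(A_0)=d_0>0$ every nearby compactum has diameter comparable to $d_0$, which removes the ``arbitrarily small localized'' compacta that otherwise sit in the interior of the natural candidate sets.

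The nowhere dense sets would be built from the only perturbations that keep a compactum inside the hyperspace: adding finitely many isolated points, or adjoining a sequence converging to a point already in $A'$. Both operations leave $A'$ unchanged (so membership in $\A_n$, $\A_\w$, $\A_{\w+1}$ is preserved, and for $\A_n(X,F)$ they respect $A'\subseteq F$ provided the new limit lies in $F$), and both are arbitrarily Hausdorff-small when carried out at a fine enough scale near $A$. The model closed sets are $\{A: A\cap B(q,\e)=\emptyset\}=\{A:A\subseteq X\setminus B(q,\e)\}$, which are genuinely closed in the Vietoris topology; inserting a point of $X$ into the forbidden ball $B(q,\e)$ is exactly a structure-preserving, Hausdorff-small perturbation that escapes such a set, and this is what yields nowhere-density at the relevant compacta.

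Covering is where the structure of countable compacta with few accumulation points enters. For $A\in\A_n(X)$ the derived set $A'$ is finite and, away from $A'$, $A$ is discrete; equivalently, for every $q\notin A'$ only finitely many points of $A$ lie near $q$. This is the hyperspace incarnation of the ``$\forall^\infty$'' clause defining $P_3$ and $S_4$: the maps $\psi_n,\psi_1$ of Theorem~\ref{p1} encode precisely this, with the grid coordinate $x_{2^{j-n}(2k+1)}$ recording the presence of a point at dyadic scale $k$ near a fixed non-accumulation location, so that $\psi_n^{-1}(\A_n)=P_3$ and $\psi_1^{-1}(\A_\w)=S_4$. I would therefore index the cover by a countable dense set $D$ and dyadic scales, diagonalizing the emptied-ball condition against the scale so that each member is simultaneously nowhere dense (escape at a finer scale) and the union is exhaustive (the finitely-many-points-off-$A'$ condition is eventually witnessed), mirroring the explicit meagerness of $P_3$ and $S_4$ in Lemmas~\ref{l:P} and~\ref{l:S4}. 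For $\mathcal H(X)=\F(X)\cup\A_{\w+1}(X)$ the finite part $\F(X)=\bigcup_n\F_n(X)$ is already meager in $\mathcal H(X)$ (each $\F_n(X)$ is closed with dense complement, by splitting a point of the dense-in-itself $X$), so the work again concentrates on the infinite countable compacta, now with $A'$ merely countable.

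The main obstacle is reconciling the two requirements on the cover. A single emptied-ball set $\{A:A\cap B(q,\e)=\emptyset\}$ is \emph{not} nowhere dense globally---any compactum lying far from $q$ is interior to it---and, dually, no fixed countable list of ``thinning rates'' off $A'$ can dominate every countable compactum, since a diagonal argument produces one whose isolated points accumulate arbitrarily slowly. The resolution is the localization step together with the dyadic diagonalization: after passing to a neighborhood of $A_0$ the balls $B(q,\e)$ that matter are forced to sit near the configuration (exactly as the distinguished location stays among the clustering points in the $\psi_n$-model), so the emptied-ball sets become nowhere dense there, while tying the scale to the bound makes the union exhaust the neighborhood. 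Verifying that these localized, diagonalized sets are relatively closed and have dense complement---and that the scheme goes through verbatim for $\A_n(X,F)$ with limits constrained to $F$ and for $\A_{\w+1}(X)$ with countable $A'$---is the routine but delicate part I would then carry out.
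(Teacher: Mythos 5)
The central mechanism of your sketch fails: the emptied-ball sets $\{A:A\cap B(q,\e)=\emptyset\}$ are not nowhere dense, and your localization step does not repair this. If $A_0$ misses the \emph{closed} ball $B[q,\e]$, then every $A^*\in\A$ Hausdorff-close to $A_0$ still misses $B(q,\e)$, so $A_0$ is an interior point of your set relative to the hyperspace; and passing to a Vietoris neighborhood $\mathcal U$ of $A_0$ does not exclude such configurations, since balls centered at points $q$ near but off $A_0$ always exist. Even your diagonalized variant, tying the hole radius to the distance (say $\dist(q,A)\le 2^{-s}$ and $A\cap B(q,2^{-s-1})=\emptyset$), has interior points: any $A$ with $2^{-s-1}<\dist(q,A)<2^{-s}$ satisfies both conditions stably under small Hausdorff perturbations, because escaping requires inserting a point into the hole at a \emph{fixed} positive distance $\ge 2^{-s-1}$ from $A$, which is not an arbitrarily small perturbation. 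Since you never actually define the cover and defer the verification as ``routine but delicate,'' the proof collapses exactly at this deferred step. (Your covering narrative also only addresses finite $A'$; for $\A_{\w+1}(X)$ and $\mathcal H(X)$, where $A'$ is merely countable, no covering scheme is indicated.)

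The missing idea --- and the paper's entire proof --- is to bound the \emph{number of points of $A$} outside a clopen set, rather than to posit a hole near $A$. Fix a clopen base $\{B_i\}$ of $X$ closed under finite unions (here is where $0$-dimensionality enters, which your sketch never uses) and put $\mathcal B_{i,k}=\{A\in\A: 1\le |A\setminus B_i|\le k\}$. These are closed precisely because $B_i$ is clopen (so $A\mapsto A\cap(X\setminus B_i)$ is continuous and the cardinality bound is a closed condition), and they are nowhere dense because one can adjoin $k$ new points of $X$ \emph{arbitrarily close to an existing point} $a\in A\setminus B_i$ (using that $X$ is dense-in-itself and $X\setminus B_i$ is open): this perturbation is Hausdorff-small, leaves $A'$ unchanged --- hence preserves membership in all four hyperspaces and the constraint $A'\subseteq F$ --- and escapes $\mathcal B_{i,k}$. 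Covering is immediate and uniform for finite and infinite $A'$ alike: every nonempty countable compactum $A$ has an isolated point $a$, and by compactness and $0$-dimensionality some $B_i$ contains $A\setminus\{a\}$ and omits $a$, so $A\in\mathcal B_{i,1}$. No localization principle, no $P_3$/$S_4$ structure theory, and no separate treatment of $\F(X)$ is needed. The contrast is instructive: nowhere density in these hyperspaces is achievable only through conditions whose violation costs arbitrarily little, i.e., conditions anchored at points of $A$ itself, not at holes a definite distance away.
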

\begin{proof}
Let $\A$ denote any of the hyperspaces. Let $\{B_1,B_2,\ldots\}$ be a clopen base in $X$ which is closed under finite unions and $\mathcal B_{i,k}$ be the family of all $A$ in $\mathcal A$  such that $ 1\le |A\setminus B_i|\le k\}.$
Since $X$ has no isolated points, the sets $\mathcal B_{i,k}$ are nowhere dense in  $\mathcal A$. Clearly, $\mathcal A$  is the union $\bigcup_{i,k} \mathcal B_{i,k}$.
\end{proof}

\

Now, we get the following characterizations.

\begin{theorem}\label{tPolish}
If $X$ is a dense-in-itself, 0-dimensional Polish space then
\begin{enumerate}
\item
$\mathcal A_n(X)\cong\mathcal A_n(X,F)\cong P_3\cong\mathbb Q^\w$ for each $n\in\N$ and $F\in\K(X)$ of cardinality $\ge n$;
\item
$\A_\omega(X)\cong \A_\w(X,F)\cong S_4$ for each infinite $F\in\K(X)$;
\item
$\A_{\w+1}(X)\cong \mathcal H(X)\cong \mathcal H(\Q)$.
\end{enumerate}
\end{theorem}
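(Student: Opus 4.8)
The plan is to prove all three parts of Theorem~\ref{tPolish} by combining the descriptive-complexity computations of Section~2 with the topological classification theorems (Lemmas~\ref{l:P},~\ref{l:S4},~\ref{l:S}) and the first-category result (Lemma~\ref{le}). The unifying strategy is: identify the exact absolute Borel class of each hyperspace, verify it is $0$-dimensional, separable, metrizable, of the first category, and nowhere in the dual class, and then invoke the Steel--van Engelen uniqueness theorem (Lemma~\ref{l:S}) to conclude it is homeomorphic to the canonical model ($\mathbb Q^\w$, $S_4$, or $\mathcal H(\Q)$).

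For part (1), I would argue as follows. By Theorem~\ref{p'}(1) and Corollary~\ref{Polish}, the spaces $\mathcal A_n(X)$ and $\mathcal A_n(X,F)$ are absolute $F_{\sigma\delta}$, i.e.\ in $\mathbf\Pi^0_3$. They are $0$-dimensional because they are subspaces of $\mathcal K(X)$, which is $0$-dimensional since $X$ is; they are separable metrizable for the same reason. By Lemma~\ref{le} they are of the first category. The remaining point is that they are nowhere $\mathbf\Sigma^0_3$ (nowhere $G_{\delta\sigma}$): this is exactly Theorem~\ref{p1}(1) and (2), once we note that a dense-in-itself $0$-dimensional Polish space contains a compactum of Cantor--Bendixson rank $3$ (a copy of the set $A$ used there, which sits inside any Cantorian piece of $X$). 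Thus both spaces satisfy all hypotheses of Lemma~\ref{l:S} with $\alpha=3$, and since $\mathbb Q^\w\cong P_3$ by Lemma~\ref{l:P} is itself a $0$-dimensional, first-category, nowhere $G_{\delta\sigma}$, absolute $F_{\sigma\delta}$ space (being a countable product of copies of $\Q$, hence $\mathbf\Pi^0_3$, and of the first category with the nowhere-$G_{\delta\sigma}$ property recorded in Lemma~\ref{l:P}), the uniqueness theorem forces all of them to be homeomorphic.

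Part (2) runs in perfect parallel at one level higher in the Borel hierarchy. By Theorem~\ref{p'}(2) and Corollary~\ref{Polish}, $\mathcal A_\omega(X)$ and $\mathcal A_\omega(X,F)$ are absolute $F_{\sigma\delta\sigma}$, i.e.\ in $\mathbf\Sigma^0_4$. They are $0$-dimensional, separable, metrizable, and of the first category (Lemma~\ref{le} again). By Theorem~\ref{p1}(1) and (3) they are nowhere $\mathbf\Pi^0_4$ (nowhere $G_{\delta\sigma\delta}$). Lemma~\ref{l:S4} supplies the corresponding facts for the model $S_4$: it is $0$-dimensional, of the first category, and, being strongly homogeneous and nowhere $G_{\delta\sigma\delta}$, meets the hypotheses of Lemma~\ref{l:S} with $\alpha=4$ in the $\mathbf\Sigma^0_4$ case. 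Applying Lemma~\ref{l:S} then gives $\mathcal A_\omega(X)\cong\mathcal A_\omega(X,F)\cong S_4$.

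For part (3), the decomposition $\mathcal A_{\w+1}(X)=\mathcal H(X)\setminus\mathcal F(X)$ and the fact that $\mathcal A_{\w+1}(X)$ is $G_\delta$ in $\mathcal H(X)$ (noted in the introduction) reduce matters to identifying $\mathcal H(X)$ with $\mathcal H(\Q)$. Since $X$ is an uncountable Polish space, $\mathcal H(X)$ is $\mathbf\Pi^1_1$-complete; it is $0$-dimensional, separable, metrizable, and of the first category by Lemma~\ref{le}. Michalewski's characterization of $\mathcal H(\Q)$ as the unique (up to homeomorphism) first-category, $0$-dimensional, separable, metrizable space every nonempty clopen subset of which is $\mathbf\Pi^1_1$-complete is then the tool to invoke: I would verify that every nonempty relatively clopen piece of $\mathcal H(X)$ is again $\mathbf\Pi^1_1$-complete using a localized version of the Hurewicz-type argument (embedding a $\mathbf\Pi^1_1$-complete set into any basic Vietoris-open piece, exactly as in the proof of Theorem~\ref{p1}(1)), and likewise that $\mathcal A_{\w+1}(X)$ inherits the same clopen-completeness. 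The main obstacle I anticipate is precisely this \emph{everywhere} $\mathbf\Pi^1_1$-completeness: establishing the global Borel/coanalytic class is routine, but showing the completeness persists in every clopen subset requires a careful localization of the reduction maps $\widetilde{\psi_n}$ and an argument that $\mathcal H(\Q)$'s characterizing property transfers to $\mathcal A_{\w+1}(X)$ through the $G_\delta$ inclusion.
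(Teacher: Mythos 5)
Your proposal is correct and follows essentially the same route as the paper: parts (1) and (2) are obtained exactly as in the text by combining Corollary~\ref{Polish}, Theorem~\ref{p1}, and Lemmas~\ref{l:S4}--\ref{le} with the Steel--van Engelen uniqueness theorem, and part (3) by verifying first category plus everywhere $\mathbf\Pi^1_1$-completeness of clopen pieces (via a union-with-a-fixed-convergent-set reduction localized to basic Vietoris sets) and invoking Michalewski's characterization. The ``obstacle'' you anticipate in part (3) is resolved in the paper precisely by the map $f(A)=A\cup A_1$ with $A_1\in\A_1(U_1)$, which simultaneously handles $\mathcal H(X)$ and $\A_{\w+1}(X)$ since $f^{-1}(\A_{\w+1}(X)\cap\mathcal U)=f^{-1}(\mathcal H(X)\cap\mathcal U)$.
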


\begin{proof}
Parts (1) and (2) follow from Corollary~\ref{Polish}, Theorem~\ref{p1} and Lemmas~\ref{l:S4}--\ref{le}.

For part (3),  observe that  nonempty clopen subsets of $\A_{\w+1}(X)$ and $\mathcal H(X)$ are  $\mathbf \Pi^1_1$-complete. To see this,
let $\mathcal U=\langle U_1,\ldots,U_k\rangle$ be a basic clopen set in $\K(X)$, where  $U_1,\ldots,U_k$ are basic clopen subsets of $X$.
Take a countable dense subset $Q$ in $\bigcup_{i=1}^k U_i$ and choose $A_1\in\A_1(U_1)$. The set $\mathcal U$ is a Polish 0-dimensional space containing the Hurewicz set $\mathcal H(Q)\cong \mathcal H(\Q)$. Now, the  continuous map
$f:\mathcal U \to \mathcal U,$ $f(A)=A\cup A_1$
 satisfies $$f^{-1}\bigl(\A_{\w+1}(X)\cap \mathcal U\bigr)=f^{-1}\bigl(\mathcal H(X)\cap \mathcal U\bigr)=\mathcal H(Q).$$
 Thus, by Lemma~\ref{le} we can apply the Michalewski's characterization~\cite{Mi}.

\end{proof}
\

\begin{theorem}\label{Ksigma}
If $X$ is a 0-dimensional  $\sigma$-compact metric  space and $F\in \K(X)$, then $\mathcal A_n(X,F)$ is in $\mathbf \Pi^0_3$ for each $n\in\mathbb N$ and $\mathcal A_\omega(X,F)$ is in $\mathbf \Sigma^0_4$.
\end{theorem}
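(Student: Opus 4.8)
The plan is to realize $\A_n(X,F)$ and $\A_\omega(X,F)$ as subsets of a Polish hyperspace and to read off their absolute classes from their relative classes there. I would embed $X$ into its completion $\bar X$, a Polish space; then $\K(\bar X)$ is Polish and $\A_n(X,F),\A_\omega(X,F)\subseteq\K(X)\subseteq\K(\bar X)$. Since a separable metrizable space belongs to the absolute class $\mathbf\Pi^0_3$ (resp.\ $\mathbf\Sigma^0_4$) precisely when it is $F_{\sigma\delta}$ (resp.\ $F_{\sigma\delta\sigma}$) in some Polish space containing it, it suffices to prove that $\A_n(X,F)$ is $F_{\sigma\delta}$ and $\A_\omega(X,F)$ is $F_{\sigma\delta\sigma}$ in $\K(\bar X)$. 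Fix compacta $X_1\subseteq X_2\subseteq\cdots$ with $\bigcup_k X_k=X$; replacing $X_k$ by $X_k\cup F$ I may assume $F\subseteq X_1$.

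First I would dispatch the accumulation‑point constraints. As recalled in the proof of Theorem~\ref{p'}, the derived‑set map $\mathbf D\colon\K(\bar X)\to\K(\bar X)\cup\{\emptyset\}$ sends closed sets to $F_{\sigma\delta}$ preimages. Hence, with $\F_n(F)=\{S\in\K(\bar X):S\subseteq F,\ 1\le|S|\le n\}$ closed in $\K(\bar X)$, the set $\{A\in\K(\bar X):1\le|A'|\le n,\ A'\subseteq F\}=\mathbf D^{-1}(\F_n(F))$ is $F_{\sigma\delta}$ in $\K(\bar X)$.

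The heart of the argument is to add the ambient constraint $A\subseteq X$ without leaving the class $F_{\sigma\delta}$ — the delicate point being that $X$ is only $\sigma$-compact, so $\{A\in\K(\bar X):A\subseteq X\}$ may itself be very complicated (for $X=\Q$ it is $\mathbf\Pi^1_1$-complete). The key claim is that, restricted to sets with $A'\subseteq F$,
\[
A\subseteq X\iff\forall m\in\N\ \exists k\in\N\colon\ A\subseteq X_k\cup B[F,\tfrac1m].
\]
For the forward implication, continuity of $d(\cdot,F)$ together with $A'\subseteq F$ forces $\{a\in A:d(a,F)>\tfrac1m\}=A\setminus B[F,\tfrac1m]$ to have no accumulation point, hence to be finite; being a finite subset of $X=\bigcup_k X_k$ it lies in some $X_k$, so $A\subseteq X_k\cup B[F,\tfrac1m]$. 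For the converse, each $a\in A\setminus F$ has $d(a,F)>0$, so it escapes $B[F,\tfrac1m]$ for large $m$ and lands in the corresponding $X_k\subseteq X$, while $A\cap F\subseteq X$ is automatic. The decisive feature is that $X_k\cup B[F,\tfrac1m]$ is closed in $\bar X$, so each $\{A\in\K(\bar X):A\subseteq X_k\cup B[F,\tfrac1m]\}$ is closed in $\K(\bar X)$; thus the right-hand side is of the form $\bigcap_m\bigcup_k(\text{closed})$, i.e.\ $F_{\sigma\delta}$. Intersecting with the $F_{\sigma\delta}$ set of the previous paragraph gives $\A_n(X,F)\in F_{\sigma\delta}$ in $\K(\bar X)$, hence $\A_n(X,F)\in\mathbf\Pi^0_3$.

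Finally, $\A_\omega(X,F)=\bigcup_{n\in\N}\A_n(X,F)$ is a countable union of $F_{\sigma\delta}$ subsets of $\K(\bar X)$, hence $F_{\sigma\delta\sigma}$ there, which yields $\A_\omega(X,F)\in\mathbf\Sigma^0_4$. I expect the main obstacle to be exactly the alternating quantifier $\forall m\,\exists k$: handled naively with open neighborhoods $B(F,\tfrac1m)$ the inner sets are only $G_\delta$ and the formula degrades to $G_{\delta\sigma\delta}=\mathbf\Pi^0_4$. Using closed neighborhoods $B[F,\tfrac1m]$ (so that the Vietoris ``contained‑in'' sets become \emph{closed}), in tandem with the finiteness of $\{a\in A:d(a,F)>\tfrac1m\}$ supplied by $A'\subseteq F$, is what pins the complexity down to $F_{\sigma\delta}$. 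Note that $0$-dimensionality of $X$ is not actually needed for this upper bound.
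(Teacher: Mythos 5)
Your proof is correct, and it reaches the same conclusion by a noticeably different (and in one respect more general) route. The shared engine is the observation that $A'\subseteq F$ forces $A$ to be finite off any neighbourhood of $F$; but the paper exploits this by embedding $X$ in the Cantor set, choosing a decreasing sequence of \emph{clopen} sets $U_j$ with $\bigcap_j U_j=F$, and expressing ``$A\subseteq X$'' through the conditions $A\cap(U_j\setminus U_{j+1})\in\F(X_j)\cup\{\emptyset\}$, each of which is $F_\sigma$ because the intersection map $A\mapsto A\cap(U_j\setminus U_{j+1})$ with a clopen annulus is continuous and $\F(X_j)\cup\{\emptyset\}$ is $\sigma$-compact. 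This is where the paper genuinely uses $0$-dimensionality. You instead work in the completion $\overline{X}$ and encode ``$A\subseteq X$'' (on the set $\{A:A'\subseteq F\}$) as $\bigcap_m\bigcup_k\{A: A\subseteq X_k\cup B[F,\tfrac1m]\}$, a $\bigcap\bigcup(\text{closed})$ formula since containment in a closed set is Vietoris-closed; your equivalence checks out in both directions, and the rest (the derived-set map giving an $F_{\sigma\delta}$ preimage of the closed set $\F_n(F)$, and $\A_\omega(X,F)=\bigcup_n\A_n(X,F)$) matches the paper. What your version buys is exactly what you note at the end: the hypothesis of $0$-dimensionality is not needed for this upper bound, whereas the paper's clopen-annulus decomposition is tied to it; what the paper's version buys is a slightly more structural description of $\A_n(X,F)$ as $\mathbf D^{-1}(\F_n(F))\cap\bigcap_j\mathcal B_j$ with each $\mathcal B_j$ an explicit $F_\sigma$ pulled back along a continuous map. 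Both arguments are complete and correct.
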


\begin{proof}
We can assume that $X$ is contained in the Cantor set $C$.
Consider the derived set operator $\bold D$ on $\mathcal K(C)$.
The preimage $D^{-1}\left(\mathcal F_n\right)$ is $F_{\sigma\delta}$ in $\mathcal K(C)$ for each  $n\in\mathbb N$ and $F_{\sigma\delta\sigma}$ for $n=\omega$.
Let $C=U_1\supsetneqq
U_2\supsetneqq\dots $ be clopen subsets of $C$ whose intersection is $F$. The sets $X_j=X\cap(U_{j}\setminus U_{j+1})$ are $\sigma$-compact. It follows that sets $\mathcal F(X_j)$ are also $\sigma$-compact. Hence, each $\mathcal F(X_j)\cup \{\emptyset\}$ as a subset of a compact space $\mathcal K(C)\cup\{\emptyset\}$ is  $\sigma$-compact.

Let
$$\mathcal B_j=\{A\in \mathcal K(C): A\cap (U_j\setminus U_{j+1})\in \mathcal F(X_j)\cup \{\emptyset\}\}.$$

The intersection map
$$\Phi: \mathcal K(C)\to \mathcal K(C)\cup \{\emptyset\}, \quad \Phi(A)=A\cap (U_j\setminus U_{j+1})$$ is continuous~\cite[Theorems 2 and 3, p.180]{KuI},
consequently,
$$\mathcal B_j=\Phi^{-1}\left(\mathcal F(X_j)\cup \{\emptyset\}\}\right)$$ is $F_\sigma$ in     $\mathcal K(C)$.

Observe that

$$\mathcal A_n(X,F) = D^{-1}\left(\mathcal F_n(K_i)\right)\cap \mathcal A_n(X)= D^{-1}\left(\mathcal F_n(K_i)\right)\cap \bigcap_{j=1}^\infty \mathcal B_j $$
which shows that
\begin{equation*}\label{eq:Ki}
\text{$\mathcal A_n(X,F)$ is  $F_{\sigma\delta}$ in $\mathcal K(C)$ for $n\in\mathbb N$,  and is  $F_{\sigma\delta\sigma}$ in $\mathcal K(C)$ for $n=\omega$}.
\end{equation*}

\end{proof}

\

\begin{theorem}\label{Ksigma1}
\begin{enumerate}
\item
If $X$ is a dense-in-itself, 0-dimensional $\sigma$-compact metric space  then  $\mathcal A_n(X,F)\cong\mathbb Q^\w$ for any $F\in\K(X)$ of cardinality $|F|\ge n$;

\item
$\A_w(X)\cong\A_w(X,F)\cong S_4$ for  any infinite $F\in\mathcal K(X)$.
\end{enumerate}
\end{theorem}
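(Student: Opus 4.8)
The plan is to prove both parts by the same template that underlies Theorem~\ref{tPolish} in the Polish case: identify each hyperspace with a known model through the Steel--van Engelen uniqueness criterion (Lemma~\ref{l:S}). To recognize a space $Y$ as $P_3\cong\Q^\w$ (part (1)) or as $S_4$ (part (2)) it suffices to verify that $Y$ is $0$-dimensional, separable metrizable, of the first category, lies in the relevant absolute Borel class, and is nowhere of the dual class. The $0$-dimensionality and separable metrizability are inherited from $\K(X)$ (which is $0$-dimensional since $X$ is), first category is Lemma~\ref{le}, and the nowhere-conditions come from Theorem~\ref{p1}. The only place where the argument leaves the Polish case is the upper Borel bound, where Corollary~\ref{Polish} is replaced by Theorem~\ref{Ksigma}. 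Comparing the resulting list of properties with those of $P_3$ (Lemma~\ref{l:P}: absolutely $\mathbf\Pi^0_3$, first category, nowhere $\mathbf\Sigma^0_3$) and of $S_4$ (Lemma~\ref{l:S4} together with its $\mathbf\Sigma^0_4$-completeness: absolutely $\mathbf\Sigma^0_4$, first category, nowhere $\mathbf\Pi^0_4$) closes each identification.

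For part (1) I would fix $n\in\N$ and $F\in\K(X)$ with $|F|\ge n$. By Theorem~\ref{Ksigma}, $\A_n(X,F)$ is absolutely $\mathbf\Pi^0_3$, and by Lemma~\ref{le} it is of the first category. To see it is nowhere $\mathbf\Sigma^0_3$ I invoke Theorem~\ref{p1}(2): $\A_n(X,F)$ is nowhere $G_{\delta\sigma}$ in $\K(X)_F$. If a nonempty relatively open $V\subseteq\A_n(X,F)$ were an absolute $\mathbf\Sigma^0_3$-space, then, by the very definition of the absolute additive class, $V$ would be $G_{\delta\sigma}$ in every separable metrizable superspace, in particular in $\K(X)_F$, contradicting Theorem~\ref{p1}(2). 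Thus $\A_n(X,F)$ satisfies all hypotheses of Lemma~\ref{l:S} for $\alpha=3$ in its multiplicative form, and since $P_3\cong\Q^\w$ shares these properties, $\A_n(X,F)\cong P_3\cong\Q^\w$.

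Part (2) for $\A_\w(X,F)$ with $F$ infinite is entirely parallel: Theorem~\ref{Ksigma} gives $\A_\w(X,F)\in\mathbf\Sigma^0_4$, Lemma~\ref{le} gives first category, and Theorem~\ref{p1}(3) together with the same absolute-class translation gives nowhere $\mathbf\Pi^0_4$; Lemma~\ref{l:S} for $\alpha=4$ (additive form) then yields $\A_\w(X,F)\cong S_4$. It remains to treat the un-relativized hyperspace $\A_\w(X)$: here first category is Lemma~\ref{le} (case $n=\omega$), the nowhere-$\mathbf\Pi^0_4$ property is Theorem~\ref{p1}(1) with the translation above, and a final application of Lemma~\ref{l:S} will give $\A_\w(X)\cong S_4$, whence $\A_\w(X)\cong\A_\w(X,F)\cong S_4$.

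The one genuinely new ingredient, which I expect to be the main obstacle, is the upper bound $\A_\w(X)\in\mathbf\Sigma^0_4$ for the global hyperspace, since Theorem~\ref{Ksigma} is stated only for a fixed $F$. One cannot simply intersect $\mathbf D^{-1}(\F\setminus\{\emptyset\})$ (which is $F_{\sigma\delta\sigma}$ in the compact ambient $\K(C)\supseteq\K(X)$) with $\K(X)$, because $\K(X)$ is in general merely coanalytic in $\K(C)$ (for example $\K(\Q)$ is $\mathbf\Pi^1_1$-complete), so the naive estimate is hopeless. The resolution is that restricting to compacta of finite Cantor--Bendixson rank tames the membership condition $A\subseteq X$: writing $\A_\w(X)=\bigcup_{n\in\N}\A_n(X)$, each $\A_n(X)$ is an absolute Borel set of the small class $D_{2n}(F_{\sigma\delta})$, by Theorem~\ref{p'}(6) applied with $F=X$ (an $F_\sigma$-subset of itself) and realized in the Polish ambient $\K(C)$ exactly as in the proof of Theorem~\ref{Ksigma}, where the finitely many accumulation points are localized by clopen sets and the $\sigma$-compactness of $X$ turns the remaining conditions into $F_\sigma$-sets. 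Since $D_{2n}(F_{\sigma\delta})\subseteq\mathbf\Sigma^0_4$ and the additive class $\mathbf\Sigma^0_4$ is closed under countable unions, the union over $n$ lands in $\mathbf\Sigma^0_4$, supplying the missing bound and completing the proof.
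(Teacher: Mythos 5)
Your proposal is correct and follows essentially the same route as the paper: Theorem~\ref{Ksigma} (resp.\ Theorem~\ref{p'}(6) applied in $\K(C)$ with $X$ viewed as an $F_\sigma$-subset of the Cantor set $C$) for the upper Borel bounds, Lemma~\ref{le} for first category, Theorem~\ref{p1} for the nowhere-dual-class conditions, and Lemma~\ref{l:S} to conclude. In particular, the step you single out as the main obstacle --- bounding $\A_\w(X)$ by writing it as $\bigcup_n\A_n(X)$ with each term of type $D_{2n}(F_{\sigma\delta})$ in $\K(C)$ --- is exactly how the paper obtains $\A_\w(X)\in\mathbf\Sigma^0_4$.
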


\begin{proof}
(1).  $\mathcal A_n(X,F)$ is in $\mathbf \Pi^0_3$ by Theorem~\ref{Ksigma}. Theorem~\ref{p1} and Lemma~\ref{le} guarantee that  the hyperspace is nowhere $\mathbf \Sigma^0_3$  and of the first category, so Lemma~\ref{l:S} applies.

(2). The space $X$ can be considered as an $F_\sigma$-subset of the Cantor set $C$. By Theorem~\ref{p'} (6), the hyperspaces $\A_n(X)$, $n\in\N$, are $F_{\sigma\delta\sigma}$-subsets of $\K(C)$, so  $\A_w(X)$ also is $F_{\sigma\delta\sigma}$ in $\K(C)$, hence $\A_w(X)$ is in $\mathbf\Sigma^0_4$.
 $\A_w(X,F)$ is in $\mathbf \Sigma^0_4$ by Theorem~\ref{Ksigma}. Both  hyperspaces are nowhere $\mathbf \Pi^0_4$ (Theorem~\ref{p1}) and of the first category (Lemma~\ref{le}), hence they are homeomorphic to $S_4$ by Lemma~\ref{l:S}.
 \end{proof}

\begin{corollary}\label{cor:Q}\begin{enumerate}
\item
$\mathcal A_n(\Q,F)\cong\A_n(\R\setminus \Q)\cong \mathcal A_n(\R\setminus \Q,F')\cong \Q^\w$ for any $F\in\K(\Q)$ and $F'\in \K(\R\setminus \Q)$  of cardinalities $\ge n$.
\item
$\A_w(\Q)\cong\A_w(\Q,F)\cong \A_w(\R\setminus \Q)\cong\mathcal A_w(\R\setminus \Q,F')\cong S_4$ for  any infinite $F\in\mathcal K(\Q)$ and $F'\in \K(\R\setminus \Q).$
\end{enumerate}
\end{corollary}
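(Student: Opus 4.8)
The plan is to recognize both $\Q$ and $\R\setminus\Q$ as instances already covered by the two preceding characterization theorems, and then to chain the resulting homeomorphisms by transitivity of $\cong$. There is essentially no new content to prove; the work consists entirely in checking that the two spaces fall under the respective hypotheses and in reading off the correct combination of earlier conclusions.

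First I would treat the rationals. The space $\Q$ is metric, separable, $\sigma$-compact, $0$-dimensional, and has no isolated points, so it is precisely the type of space handled by Theorem~\ref{Ksigma1}. Applying that theorem with $X=\Q$ yields $\A_n(\Q,F)\cong\Q^\w$ for every $F\in\K(\Q)$ with $|F|\ge n$, together with $\A_\w(\Q)\cong\A_\w(\Q,F)\cong S_4$ for every infinite $F\in\K(\Q)$. I would note here explicitly that the full hyperspace $\A_n(\Q)$ (without the restraint $F$) is deliberately absent from statement~(1): its $F_{\sigma\delta}$-absoluteness is exactly the open problem flagged in the introduction, and Theorem~\ref{Ksigma1}(1) only controls the restricted version $\A_n(\Q,F)$.

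Next I would treat the irrationals. The set $\R\setminus\Q$ is a $G_\delta$-subset of $\R$, hence a Polish space; it is $0$-dimensional and dense-in-itself (indeed $\R\setminus\Q\cong\N^\N$), so it satisfies the hypotheses of Theorem~\ref{tPolish}. Applying that theorem with $X=\R\setminus\Q$ gives $\A_n(\R\setminus\Q)\cong\A_n(\R\setminus\Q,F')\cong\Q^\w$ for each $n\in\N$ and each $F'\in\K(\R\setminus\Q)$ with $|F'|\ge n$, and likewise $\A_\w(\R\setminus\Q)\cong\A_\w(\R\setminus\Q,F')\cong S_4$ for each infinite $F'$.

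Finally I would assemble the two chains. Since every hyperspace appearing in~(1) is now seen to be homeomorphic to the single model $\Q^\w$, and every hyperspace appearing in~(2) to the single model $S_4$, transitivity of $\cong$ delivers the full strings of homeomorphisms asserted in the corollary. The only steps that truly require a sentence of justification are the two hypothesis checks ($\Q$ being dense-in-itself $0$-dimensional $\sigma$-compact, and $\R\setminus\Q$ being dense-in-itself $0$-dimensional Polish); I do not expect any genuine obstacle, as both verifications are standard.
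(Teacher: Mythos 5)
Your proposal is correct and is exactly the argument the paper intends: the corollary is stated without proof precisely because it is the specialization of Theorem~\ref{Ksigma1} to $X=\Q$ and of Theorem~\ref{tPolish} to $X=\R\setminus\Q$, chained by transitivity. Your side remark about the deliberate omission of the unrestricted $\A_n(\Q)$ from part~(1) also matches the paper's discussion of the open $F_{\sigma\delta}$-absoluteness problem.
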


\

In view of Lemma~\ref{l:S} and by  Theorem~\ref{tPolish},  Theorem~\ref{p1} and Lemma~\ref{le}, the  question asked in~\cite{GO} if $\mathcal A_1(\mathbb R \setminus  \mathbb Q)$ is homeomorphic to $\mathcal A_1(\mathbb Q)$ reduces to the problem of the $F_{\sigma\delta}$-absolutness of $\mathcal A_1(\mathbb Q)$ (more generally, of $\A_n(\Q)$); equivalently,   whether or not $\mathcal A_1(\mathbb Q)$ is   $F_{\sigma\delta}$ in $\mathcal K(C)$, where $C$ is a Cantor set.
Aiming at this direction, we  observe several   facts shedding some light on the structure of $\mathcal A_n(\mathbb Q)$.

We use the following lemma due to  van Engelen~\cite[Lemma 3.1]{E2}.

\begin{lemma}\label{l:Engelen}
Let  $X$ and $Y$ be  0-dimensional metric separable spaces, $X=\bigcup_{i=1}^\infty X_i$, $Y=\bigcup_{i=1}^\infty Y_i$  with  $X_i$ (resp. $Y_i$) closed and nowhere dense in $X$ (resp. $Y$) and let every nonempty clopen subset of $X$ (resp. $Y$) contain a closed nowhere dense copy of each $Y_i$ (resp $X_i$). Then $X\cong Y$.
\end{lemma}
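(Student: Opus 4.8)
The natural approach is a Cantor-style back-and-forth construction that yields the desired homeomorphism $h\colon X\to Y$ as the coherent limit of a sequence of finite clopen decompositions of $X$ and $Y$ matched by bijections. Because $X$ and $Y$ are $0$-dimensional and separable, every nonempty clopen set can be partitioned into finitely many nonempty clopen pieces of arbitrarily small diameter, which is what lets such a scheme run. Moreover, since $X=\bigcup_i X_i$ and $Y=\bigcup_i Y_i$ with every $X_i,Y_i$ closed and nowhere dense, each point of $X$ lies in some $X_i$ and each point of $Y$ in some $Y_i$, so individual points can be ``committed'' to honest images at finite stages.

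The core construction is an induction producing finite partitions $\mathcal P_n$ of $X$ and $\mathcal Q_n$ of $Y$ into nonempty clopen sets and bijections $f_n\colon\mathcal P_n\to\mathcal Q_n$ so that $\mathcal P_{n+1}$ refines $\mathcal P_n$, $\mathcal Q_{n+1}$ refines $\mathcal Q_n$, $f_{n+1}$ refines $f_n$ (meaning $P'\subseteq P$ implies $f_{n+1}(P')\subseteq f_n(P)$), and every member of $\mathcal P_n$ and of $\mathcal Q_n$ has diameter below $2^{-n}$. For each $x\in X$ this gives a nested sequence of clopen sets $P_n(x)\ni x$ of vanishing diameter whose matched images $f_n\bigl(P_n(x)\bigr)$ are nested clopen sets of vanishing diameter in $Y$; when that intersection is a single point it defines $h(x)$, and the mesh control then forces $h$ and its inverse to be continuous. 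By the symmetry of the hypotheses the same scheme runs from $Y$ back to $X$.

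The main obstacle --- and the only place the structural hypotheses are genuinely needed --- is to make $h$ an \emph{everywhere-defined} bijection instead of a map defined merely on a dense subset: as $X$ and $Y$ carry no completeness, a nested sequence of clopen pieces of vanishing diameter may have empty intersection. To prevent this I would interleave ``coverage tasks'' into the back-and-forth. At the $n$-th forth step, using that every nonempty clopen subset of $Y$ contains a closed nowhere dense copy of $X_n$, I would route the points of $X_n$ sitting in each current piece $P$ to honest targets inside such a copy of $X_n$ chosen in $f_n(P)$, thereby pinning their images to genuine points of $Y$; the $n$-th back step does the symmetric thing with a closed nowhere dense copy of $Y_n$ inside clopen subsets of $X$. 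The copy of $X_n$ can be taken disjoint from the finitely many previously committed (closed nowhere dense) copies, because their union is again closed and nowhere dense, so its clopen complement in $f_n(P)$ is a nonempty clopen subset of $Y$ and hence still contains a copy of $X_n$. Since every point of $X$ lies in some $X_n$ and every point of $Y$ in some $Y_n$, each point is committed at a finite stage, so $h$ is total and onto.

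Putting the pieces together, mesh control delivers bicontinuity and the coverage tasks deliver a total bijection, so $h$ is a homeomorphism. I expect the delicate part to be the bookkeeping in the coverage step: at each stage one must simultaneously fit the new nowhere dense copies so as to respect the already-committed matchings, keep $\mathcal P_n$ and $\mathcal Q_n$ honest refining partitions, and push all diameters below $2^{-n}$ --- and then verify that the committed pieces remain mutually disjoint and that the induced limit map really restricts to a homeomorphism on each nowhere dense copy. Reconciling these demands at every stage is where the construction must be executed with care.
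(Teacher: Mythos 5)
The paper does not actually prove this statement; it quotes it from van Engelen \cite[Lemma 3.1]{E2}, so there is no internal proof to compare against. Your strategy --- a back-and-forth in which the $X_i$ and $Y_i$ are alternately ``absorbed'' into closed nowhere dense copies on the other side --- is indeed the strategy behind van Engelen's proof (which runs on Knaster--Reichbach covers). But as written the argument has two concrete gaps. First, the opening claim that every nonempty clopen set admits a \emph{finite} clopen partition of arbitrarily small mesh is false for non-compact spaces (take an infinite uniformly discrete subset of $X$); one must work with countable clopen partitions, and then the ``finitely many previously committed copies'' bookkeeping, the closedness of the union of committed copies spread over infinitely many pieces, and the continuity of the limit map all require the convergence condition that Knaster--Reichbach covers are designed to supply --- a countable union of closed nowhere dense sets is in general neither closed nor nowhere dense, so closedness of the accumulated graph must come from the fact that the pieces of the cover accumulate only on the already-committed set. (Also, the ``clopen complement'' of a closed nowhere dense set is merely open and dense; you must pass to a nonempty clopen subset of that complement.)

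Second, and more seriously, the step you yourself flag as delicate is the actual mathematical content of the lemma and is not carried out: (i) when you refine $\mathcal P_n$ to $\mathcal P_{n+1}$ you must choose the matched clopen sets in $Y$ so that $f_{n+1}(P')$ meets the committed set exactly in the image of $E\cap P'$ under the partial homeomorphism built so far, \emph{simultaneously} for all prior commitments and with mesh $<2^{-(n+1)}$ --- this forces the refinement on the domain side to be chosen fine enough that the images of the committed traces are already small, which is where continuity of the partial homeomorphism must be invoked; and (ii) a point of $X_n$ may already have been committed at an earlier stage (because it lies in some $X_{n'}$ with $n'<n$, or because it was used as a target in a back step), so the $n$-th forth step must absorb only $X_n\setminus E$, working inside the clopen pieces of $X\setminus E$ where $X_n\cap U$ is genuinely closed --- otherwise $h$ is not well defined. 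Until these points are executed (essentially, until the Knaster--Reichbach extension lemma is stated and proved), the proposal is an accurate outline of the known proof rather than a proof.
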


\

\begin{lemma}\label{l:copy}
Every nonempty clopen subset of   $\mathcal A_n(\Q)$ contains a closed copy of $\mathcal A_n(\mathbb Q)$ for $n\in\mathbb N\cup\{\omega\}$.
\end{lemma}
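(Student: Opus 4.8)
The plan is to construct, for a fixed nonempty clopen subset $\mathcal V$ of $\mathcal A_n(\Q)$, a closed embedding of $\mathcal A_n(\Q)$ into $\mathcal V$. A basic clopen set in $\mathcal K(\Q)$ has the Vietoris form $\mathcal V=\langle U_1,\dots,U_k\rangle$ with each $U_i$ a clopen subset of $\Q$, and such sets form a base, so it suffices to treat this case. The idea is to pick a single clopen ``box'' $U_1$ where we will plant an isomorphic copy of the whole hyperspace $\mathcal A_n(\Q)$, while simultaneously pinning down a fixed finite ``anchor'' set that forces membership in $\mathcal V$.

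First I would fix, for each $i=1,\dots,k$, a point $u_i\in U_i$, chosen so that the $u_i$ are pairwise distinct and, crucially, so that the anchor $E=\{u_1,\dots,u_k\}$ is disjoint from $U_1$ (possible since $\Q$ has no isolated points and the $U_i$ are infinite clopen sets; shrink $U_1$ to a proper clopen subset avoiding the chosen $u_i$ if necessary). Since $\Q\cong$ any nonempty clopen subset of $\Q$, I may fix a homeomorphism $\varphi\colon\Q\to U_1$. This induces the hyperspace map $\mathcal K(\varphi)\colon\mathcal K(\Q)\to\mathcal K(U_1)$, $K\mapsto\varphi(K)$, which is a homeomorphism onto $\mathcal K(U_1)$ and preserves the derived-set operation, hence carries $\mathcal A_n(\Q)$ homeomorphically onto $\mathcal A_n(U_1)$. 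I then define
$$
\Theta\colon\mathcal A_n(\Q)\to\mathcal V,\qquad \Theta(A)=\varphi(A)\cup E .
$$
The key verifications are: (a) $\Theta$ lands in $\mathcal V$, because $\varphi(A)\subset U_1$ always meets $U_1$ and each $u_i\in U_i$, while $\varphi(A)\cup E\subset\bigcup_{i}U_i$; (b) $\Theta(A)\in\mathcal A_n(\Q)$, because adjoining the finite set $E$, which is disjoint from the accumulation set $\varphi(A)'\subset U_1$, does not change the derived set, so $\Theta(A)'=\varphi(A)'=\varphi(A')$ and $|\Theta(A)'|=|A'|$; (c) $\Theta$ is continuous and injective. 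Injectivity holds because $E$ is disjoint from $U_1$, so $A$ is recovered from $\Theta(A)$ as $\varphi^{-1}\bigl(\Theta(A)\cap U_1\bigr)$, and this recovery formula also shows $\Theta^{-1}$ is continuous on the image (using continuity of the intersection-with-clopen map).

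The step I expect to require the most care is showing that the image $\Theta(\mathcal A_n(\Q))$ is \emph{closed} in $\mathcal V$ (and hence a closed copy, as required by Lemma~\ref{l:Engelen}). For this I would argue as follows: suppose $\Theta(A_m)\to L$ in $\mathcal V$. Intersecting with the clopen set $U_1$ is continuous, and $\Theta(A_m)\cap U_1=\varphi(A_m)$, so $\varphi(A_m)\to L\cap U_1$ in $\mathcal K(U_1)$; likewise $\Theta(A_m)\setminus U_1=E$ is constant, giving $L\setminus U_1=E$ and thus $L=\varphi(A_\infty)\cup E$ where $A_\infty=\varphi^{-1}(L\cap U_1)\in\mathcal K(\Q)$. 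It remains to check $A_\infty\in\mathcal A_n(\Q)$, i.e. $1\le|A_\infty'|\le n$; the upper bound $|A_\infty'|\le n$ is automatic since $\mathcal A_n(\Q)$ is $F_{\sigma\delta}$ and in particular the condition $|K'|\le n$ is closed under Vietoris limits among infinite compacta, and the lower bound $|A_\infty'|\ge1$ (infiniteness of $A_\infty$) must be extracted from the structure — here the anchor $E$ being disjoint from $U_1$ guarantees the limit of the infinite parts stays infinite. The cleanest route is to note that $\mathcal K(U_1)$ is compact, $\Theta$ is a homeomorphism onto its image $\{\,\varphi(A)\cup E:A\in\mathcal A_n(\Q)\,\}$, and since $\mathcal A_n(U_1)$ is itself realized as $\Theta(\mathcal A_n(\Q))$ shifted by the fixed $E$, closedness of $\Theta(\mathcal A_n(\Q))$ in $\mathcal V$ reduces to closedness of $\mathcal A_n(U_1)$ in the clopen piece $\mathcal V\cap\{K:K\setminus U_1=E\}$, which in turn follows because this piece is homeomorphic to $\mathcal A_n(U_1)\cong\mathcal A_n(\Q)$ and the degenerate case $A_\infty\in\mathcal F_{<\infty}$ cannot occur as a limit without $E$ absorbing points, which the disjointness prevents. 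Assembling these observations yields a closed copy of $\mathcal A_n(\Q)$ inside $\mathcal V$, completing the argument.
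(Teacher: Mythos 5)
Your construction is the same as the paper's: plant a full copy of $\mathcal A_n(\Q)$ inside one clopen box $U_1$ (the paper takes the $U_i$ pairwise disjoint and anchors with points $u_2,\dots,u_m$ in the other boxes; your shrinking of $U_1$ achieves the same separation of the anchor $E$ from the box carrying the copy). The map $A\mapsto\varphi(A)\cup E$, its injectivity via $A=\varphi^{-1}(\Theta(A)\cap U_1')$, and the fact that adjoining the finite $E$ does not change the derived set are all fine.

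However, your verification of closedness rests on two false assertions. First, the condition $|K'|\le n$ is \emph{not} closed under Vietoris limits among infinite compacta (and being $F_{\sigma\delta}$ has nothing to do with closedness): for example, the sets $K_m=\{0\}\cup\{1/k:k\in\N\}\cup\{1\}\cup\{1+1/k:k\le m\}$ all lie in $\mathcal A_1$ but converge to a compactum with two accumulation points. Second, the disjointness of $E$ from $U_1'$ does \emph{not} prevent the parts $\varphi(A_m)$ from collapsing to a finite set in the Vietoris limit (take $A_m=\{0\}\cup\{1/k:k\ge m\}\to\{0\}$), so the "degenerate case" can perfectly well occur as a limit in $\mathcal K(\Q)$. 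What actually saves the argument — and what your proof never quite states — is that closedness is only required \emph{relative to} $\mathcal A_n(\Q)$. The image of $\Theta$ is exactly $\mathcal A_n(\Q)\cap\{K\in\mathcal K(\Q):K\cap(\Q\setminus U_1')=E\}$: indeed, for any $K$ in that set, $U_1'$ being clopen gives $K'=(K\cap U_1')'$, so $K\in\mathcal A_n(\Q)$ already forces $K\cap U_1'\in\mathcal A_n(U_1')$. Since intersection with the clopen set $\Q\setminus U_1'$ is a continuous map into $\mathcal K(\Q)\cup\{\emptyset\}$, the set $\{K:K\cap(\Q\setminus U_1')=E\}$ is closed in $\mathcal K(\Q)$, and hence the image is closed in $\mathcal A_n(\Q)$, a fortiori in $\mathcal V$. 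In the bad limits above the limit compactum simply leaves $\mathcal A_n(\Q)$, so it is irrelevant. With this one-line replacement of your closedness argument the proof is complete and agrees with the paper's.
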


\begin{proof}
We can assume that a clopen subset $\mathcal U$ of $\mathcal A_n(\mathbb Q)$ is of the form $\mathcal U= \langle U_1,\dots U_m\rangle$ for nonempty disjoint clopen subsets $U_i$ of $\mathbb Q$. The set
$
\mathcal A_n(U_1)
$
is a closed copy of $\mathcal A_n(\mathbb Q)$.

 Fix points $u_i\in U_i$, $i=2,\dots,m$. Then
 \begin{equation}\label{eq:copy} \{A\cup\{u_2,\dots,u_m\}: A\in \mathcal A_n(U_1)\}\quad
 \text{is a closed copy  of $\mathcal A_n(\mathbb Q)$ in
$\mathcal U$.}
\end{equation}

\end{proof}

By~\cite[Theorem 4.1]{E2} and Lemmas~\ref{le},~\ref{l:copy} we get
\begin{proposition}\label{p:prod}
$\mathcal A_n(\mathbb Q)$ is strongly homogeneous  and
$\mathcal A_n(\mathbb Q)\cong \mathcal A_n(\mathbb Q)\times \mathbb Q$ for $n\in\mathbb N\cup\{\omega\}$.
\end{proposition}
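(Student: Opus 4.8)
The plan is to derive both assertions from the van Engelen criterion (Lemma~\ref{l:Engelen}), using as the main engine the following strengthening of Lemma~\ref{l:copy}: \emph{every nonempty clopen subset $\mathcal W$ of $\A_n(\Q)$ contains a closed nowhere dense copy of $\A_n(\Q)$}. To obtain it I would shrink $\mathcal W$ to a basic clopen set $\langle W_1,\dots,W_p\rangle$ with pairwise disjoint nonempty clopen $W_i\subseteq\Q$, and, after splitting $W_1$ into two nonempty clopen pieces if needed, assume $p\ge 2$. Fixing $u_i\in W_i$ for $i\ge2$, the set $Z=\{A\cup\{u_2,\dots,u_p\}:A\in\A_n(W_1)\}$ is by Lemma~\ref{l:copy} a closed copy of $\A_n(\Q)$; it is nowhere dense because $\{K\in\mathcal W:|K\cap W_2|\ge 2\}$ is open, disjoint from $Z$, and dense in $\mathcal W$ (one may always adjoin an extra isolated point inside the infinite crowded set $W_2$ without changing the derived set). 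Since a closed nowhere dense subset of the clopen set $\langle W_1,\dots,W_p\rangle$ stays closed nowhere dense in $\mathcal W$, the claim follows. I also record the routine fact that if $Z\subseteq\mathcal W$ is a closed copy of $\A_n(\Q)$ and $N\subseteq\A_n(\Q)$ is closed nowhere dense, then the image of $N$ in $Z$ is closed nowhere dense in $\mathcal W$ (a nonempty open subset of $\mathcal W$ inside that image would be open in $Z$ and contained in a nowhere dense part of $Z$).

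For strong homogeneity I would apply Lemma~\ref{l:Engelen} to $X=\mathcal U$ and $Y=\mathcal V$, where $\mathcal U,\mathcal V$ are any nonempty clopen subsets of $\A_n(\Q)$. Both are $0$-dimensional (as subspaces of the $0$-dimensional $\K(\Q)$), separable, metrizable, and of the first category (as clopen subsets of the first-category space $\A_n(\Q)$, Lemma~\ref{le}); taking closures of the nowhere dense pieces, each is a countable union of closed nowhere dense sets $\mathcal U=\bigcup_iX_i$, $\mathcal V=\bigcup_iY_i$. To verify the mixing hypothesis, fix a nonempty clopen $\mathcal W\subseteq\mathcal U$ and some $Y_i$; since $\mathcal W$ is clopen in $\A_n(\Q)$, Lemma~\ref{l:copy} yields a closed copy $Z\cong\A_n(\Q)$ in $\mathcal W$, and as $Y_i$ is nowhere dense in $\A_n(\Q)$, its image in $Z$ is a closed nowhere dense copy of $Y_i$ in $\mathcal W$. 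The symmetric condition holds identically, so Lemma~\ref{l:Engelen} gives $\mathcal U\cong\mathcal V$.

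For the factor $\Q$ I would apply Lemma~\ref{l:Engelen} to $X=\A_n(\Q)$ and $Y=\A_n(\Q)\times\Q$. Writing $Y=\bigcup_{q\in\Q}\A_n(\Q)\times\{q\}$ exhibits $Y$ as a countable union of closed nowhere dense copies of $\A_n(\Q)$, while $X=\bigcup_iX_i$ is its first-category decomposition. The forward direction is exactly the strengthened claim of the first paragraph: every nonempty clopen subset of $X$ contains a closed nowhere dense copy of $\A_n(\Q)=\A_n(\Q)\times\{q\}$. For the reverse direction, a nonempty basic clopen subset of $Y$ has the form $\mathcal O\times F$ with $\mathcal O$, $F$ nonempty clopen; by the strong homogeneity just proved, $\mathcal O\cong\A_n(\Q)$, so $\mathcal O$ contains a closed nowhere dense copy $X_i'$ of $X_i$, and $X_i'\times\{q\}$ (for $q\in F$) is a closed copy of $X_i$ lying inside the nowhere dense set $\mathcal O\times\{q\}$, hence nowhere dense in $\mathcal O\times F$. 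Lemma~\ref{l:Engelen} then yields $\A_n(\Q)\cong\A_n(\Q)\times\Q$.

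I expect the only nonroutine point to be the strengthened form of Lemma~\ref{l:copy}: that lemma alone produces a closed copy of $\A_n(\Q)$ in each clopen set but says nothing about its density, whereas Lemma~\ref{l:Engelen} demands \emph{nowhere dense} copies. The device of forcing a second point into a clopen slot $W_2$ supplies the missing nowhere-density and is the crux of the argument.
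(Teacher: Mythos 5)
Your argument is correct, but it follows a different route from the paper. The paper's proof of Proposition~\ref{p:prod} is a one-line appeal to van Engelen's Theorem~4.1 of~\cite{E2} (a zero-dimensional, separable metric, first category space every nonempty clopen subset of which contains a closed copy of the whole space is strongly homogeneous and absorbs a factor of $\Q$), combined with Lemmas~\ref{le} and~\ref{l:copy}; in particular the paper never needs to upgrade ``closed copy'' to ``closed nowhere dense copy,'' since that work is hidden inside the cited theorem. You instead re-derive the relevant instance of that theorem directly from Lemma~\ref{l:Engelen}, and you correctly identify the one genuinely nonroutine point this forces on you: Lemma~\ref{l:copy} only supplies a closed copy, while Lemma~\ref{l:Engelen} demands nowhere dense ones. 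Your device for supplying the missing nowhere-density --- pinning the trace on a second clopen slot $W_2$ to the singleton $\{u_2\}$ and observing that $\{K:|K\cap W_2|\ge 2\}$ is open and dense (adjoining one extra isolated point of $W_2\setminus K$ does not change the derived set, and $K$ has empty interior in $\Q$) --- is sound, as are the auxiliary observations that closed nowhere dense subsets transfer along closed copies and that the three applications of Lemma~\ref{l:Engelen} (two clopen pieces; $\A_n(\Q)$ against $\A_n(\Q)\times\Q$) have their hypotheses met via Lemma~\ref{le}. What your approach buys is self-containedness: the proof rests only on Lemma~\ref{l:Engelen}, which the paper already quotes, rather than on a second imported theorem; the cost is length, and the mild redundancy that you are essentially reproving van Engelen's Theorem~4.1 in the special case at hand.
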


One can easily see
\begin{lemma}\label{lem}
$\mathcal A_1(\mathbb Q)^{\{q\}}=\cl\left(\mathcal A_1(\mathbb Q,\{q\}\right)$ (the closure in $\mathcal A_1(\mathbb Q)).$
\end{lemma}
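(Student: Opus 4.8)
The plan is to unwind the notation and prove the two set-inclusions separately. By the conventions of Section~2, $\mathcal A_1(\mathbb Q)^{\{q\}}=\{A\in\mathcal K(\mathbb Q):|A'|=1,\ q\in A\}$, whereas $\mathcal A_1(\mathbb Q,\{q\})=\{A\in\mathcal K(\mathbb Q):A'=\{q\}\}$; since the closure is taken inside $\mathcal A_1(\mathbb Q)$, every member of it automatically satisfies $|A'|=1$. Throughout I will work with the Hausdorff metric $d_H$, which induces the Vietoris topology on $\mathcal K(\mathbb Q)$.

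For the inclusion $\cl(\mathcal A_1(\mathbb Q,\{q\}))\subseteq\mathcal A_1(\mathbb Q)^{\{q\}}$, which is the easy half, I would take $A$ in the closure and pick $B_m\in\mathcal A_1(\mathbb Q,\{q\})$ with $d_H(B_m,A)\to0$. Each $B_m$ is a compactum whose only accumulation point is $q$, so $q\in B_m$; hence $\dist(q,A)\le d_H(B_m,A)\to0$, and $q\in A$ because $A$ is closed. As $A\in\mathcal A_1(\mathbb Q)$ by the choice of ambient space, this gives $A\in\mathcal A_1(\mathbb Q)^{\{q\}}$.

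The reverse inclusion is where the actual work lies. Given $A\in\mathcal A_1(\mathbb Q)^{\{q\}}$, write $A'=\{p\}$; if $p=q$ there is nothing to prove, so assume $p\ne q$, whence $q$ is isolated in $A$. Fixing $\varepsilon\in(0,\tfrac12\dist(p,q))$, I would build $B\in\mathcal A_1(\mathbb Q,\{q\})$ with $d_H(A,B)<\varepsilon$ by transplanting the cluster of $A$ from $p$ onto $q$: the set $A_{\mathrm{far}}=\{a\in A:\dist(a,p)\ge\varepsilon\}$ is finite (it omits the unique accumulation point $p$) and contains $q$, so I keep $A_{\mathrm{far}}$ together with $p$, discard the remaining tail of $A$ lying in $B(p,\varepsilon)$, and adjoin a sequence $S\subset B(q,\varepsilon)\cap\mathbb Q$ converging to $q$, which exists because $\mathbb Q$ is dense-in-itself. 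Then I put $B=A_{\mathrm{far}}\cup\{p\}\cup S$.

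Finally I would verify the three properties of $B$, all routine. It is a compact subset of $\mathbb Q$; its unique accumulation point is $q$, since $A_{\mathrm{far}}$ is finite, $p$ is now isolated (being at distance $\ge\varepsilon$ from every other point of $B$), and $S$ clusters only at $q$, so indeed $B\in\mathcal A_1(\mathbb Q,\{q\})$; and $d_H(A,B)<\varepsilon$, because $B\subset B(A,\varepsilon)$ (each point of $S$ lies within $\varepsilon$ of $q\in A$) and $A\subset B(B,\varepsilon)$ (each discarded tail point lies within $\varepsilon$ of $p\in B$). Letting $\varepsilon\to0$ yields $A\in\cl(\mathcal A_1(\mathbb Q,\{q\}))$. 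The only step demanding genuine thought is the transplantation construction; once $B$ is in hand, both the identification of $B'$ and the two Hausdorff-distance estimates are immediate, so I anticipate no real obstacle.
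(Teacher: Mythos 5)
Your proof is correct and complete: the paper itself offers no argument for this lemma (it is labelled ``one can easily see''), and your two inclusions --- the closure estimate via $q\in B_m$ for the easy half, and the transplantation of the cluster from $p$ to $q$ keeping $p$ and the finite set $A_{\mathrm{far}}$ for the hard half --- are exactly the intended routine verification. No gaps.
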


\begin{proposition}\label{p:q}
$\mathcal A_1(\mathbb Q)^{\{q\}}\cong\mathcal A_1(\mathbb Q)\cong \mathcal A_1(\mathbb Q)^{\{q\}}\setminus \mathcal A_1(\mathbb Q,\{q\})$ for every $q\in\mathbb Q$.
\end{proposition}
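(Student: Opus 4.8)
The plan is to derive both homeomorphisms from van Engelen's Lemma~\ref{l:Engelen}, applied to the pairs $(\mathcal A_1(\mathbb Q),\mathcal A_1(\mathbb Q)^{\{q\}})$ and $(\mathcal A_1(\mathbb Q),\mathcal A_1(\mathbb Q)^{\{q\}}\setminus\mathcal A_1(\mathbb Q,\{q\}))$. Write $X=\mathcal A_1(\mathbb Q)$, $Y=\mathcal A_1(\mathbb Q)^{\{q\}}$ and $Z=Y\setminus\mathcal A_1(\mathbb Q,\{q\})$; all three are $0$-dimensional, separable, metrizable and dense-in-itself. First I would record that $Y$ and $Z$ are of the first category in themselves, by repeating the argument of Lemma~\ref{le}: fixing a clopen base $\{B_i\}$ of $\mathbb Q$ closed under finite unions, the sets $\{A:1\le|A\setminus B_i|\le k\}$ are closed, nowhere dense and cover the space. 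Since a first-category space is a countable union of closed nowhere dense pieces, this supplies the decompositions $X=\bigcup X_j$, $Y=\bigcup Y_j$, $Z=\bigcup Z_j$ required by Lemma~\ref{l:Engelen}.

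For $X\cong Y$ I would exploit that $Y$ is closed in $X$. It then suffices to produce, in every nonempty clopen $V\subset X$, a closed nowhere dense copy of $Y$ (its closed pieces $Y_j$ then embed automatically as closed nowhere dense subsets of $V$), and dually, in every nonempty clopen $W\subset Y$, a closed nowhere dense copy of $X$. Both are built by the standard disjoint-clopen device: writing $V=\langle U_1,\dots,U_m\rangle\cap X$ with $U_i\cong\mathbb Q$ pairwise disjoint and choosing $q'\in U_1$, the set $\{A\cup\{u_2,\dots,u_m\}:A\in\mathcal A_1(U_1)^{\{q'\}}\}$ is a closed nowhere dense copy of $Y$ in $V$; for $W=\langle U_1,\dots,U_m\rangle\cap Y$ (with $q\in U_1$ fixed) one picks a clopen $V_0\cong\mathbb Q$ disjoint from $q$ and maps $C\mapsto C\cup\{q\}\cup\{\text{fillers}\}$, obtaining a closed nowhere dense copy of $\mathcal A_1(V_0)\cong X$. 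In each case the image is cut out by closed conditions of the form ``$A\cap(\text{clopen})=\text{prescribed finite set}$'', and the unique accumulation point is forced into the intended clopen block, which is what makes the image closed. Lemma~\ref{l:Engelen} then yields $X\cong Y$.

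The pair $(X,Z)$ is the delicate one, because $Z$ is only dense and co-dense in $Y$ (by Lemma~\ref{lem}, $\mathcal A_1(\mathbb Q,\{q\})$ is dense in $Y$), hence $Z$ is not closed in $X$ and the shortcut used for $Y$ is unavailable. Here the key observation I would prove is that each closed piece $Z_j$ is itself homeomorphic to $X$. Choosing the base sets $B_i$ with $q\notin B_i$ (enough to cover $Z$, since the accumulation point of any $A\in Z$ differs from $q$), an element of $\mathcal D_{i,k}:=\{A\in Z:1\le|A\setminus B_i|\le k\}$ decomposes uniquely as $A=(A\cap B_i)\cup\{q\}\cup S$ with $A\cap B_i\in\mathcal A_1(B_i)$ and $S$ a finite subset of $(\mathbb Q\setminus B_i)\setminus\{q\}$ of size $\le k-1$; the resulting map $A\mapsto(A\cap B_i,S)$ is a homeomorphism $\mathcal D_{i,k}\cong\mathcal A_1(B_i)\times\mathcal F_{\le k-1}((\mathbb Q\setminus B_i)\setminus\{q\})\cong X\times\mathbb Q\cong X$ (simply $\cong X$ when $k=1$), where the last identifications use that $\mathcal F_{\le k-1}(\mathbb Q)$ is a nonempty countable metrizable space without isolated points (so $\cong\mathbb Q$) together with Proposition~\ref{p:prod}.

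With this in hand both containment conditions of Lemma~\ref{l:Engelen} for $(X,Z)$ become instances of copying $X$: every clopen $V\subset X$ contains a closed nowhere dense copy of $X$ (Lemma~\ref{l:copy}, after adjoining one extra fixed isolated point to guarantee nowhere density), hence a closed nowhere dense copy of each $Z_j\cong X$; and every clopen $W'\subset Z$ contains a closed nowhere dense copy of $X$ by the same filler construction as above (the built elements have $q$ isolated and accumulation point in $V_0$, so they lie in $Z$), hence closed copies of all the pieces $X_j$. Lemma~\ref{l:Engelen} then gives $X\cong Z$, completing the proof. The main obstacle throughout is precisely the closedness requirement in Lemma~\ref{l:Engelen}: for $Z$ it cannot be met by embedding $Z$ wholesale, and the decomposition into pieces each homeomorphic to $X$ (which crucially rests on $X\times\mathbb Q\cong X$) is what circumvents it.
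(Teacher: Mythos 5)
Your proof is correct in substance and runs on the same engine as the paper's: van Engelen's Lemma~\ref{l:Engelen} applied to decompositions into closed nowhere dense pieces, with the required copies produced by the filler-point device of Lemma~\ref{l:copy}. The difference lies in the decompositions. The paper writes $\mathcal A_1(\mathbb Q)=\bigcup_p\mathcal A_1(\mathbb Q)^{\{p\}}$ and $\mathcal A_1(\mathbb Q)^{\{q\}}=\bigcup_p\mathcal A_1(\mathbb Q)^{\{p,q\}}$ for the first homeomorphism, and for the second decomposes $Z=\mathcal A_1(\mathbb Q)^{\{q\}}\setminus\mathcal A_1(\mathbb Q,\{q\})$ into the sets $\mathcal B_k$ of compacta meeting a $\tfrac{\sqrt2}{k}$-neighbourhood of $q$ only in $q$; these are closed in $Y=\mathcal A_1(\mathbb Q)^{\{q\}}$, so they embed directly into clopen subsets of $\mathcal A_1(\mathbb Q)$ and no identification of the pieces with $\mathcal A_1(\mathbb Q)$ is needed. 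You instead use the first-category pieces of Lemma~\ref{le} and, for $Z$, the sets $\mathcal D_{i,k}$, which you identify with $X=\mathcal A_1(\mathbb Q)$ via $X\times\mathbb Q\cong X$ (Proposition~\ref{p:prod}). That identification is a pleasant extra observation but is not actually necessary: since $q\notin B_i$, your $\mathcal D_{i,k}$ equals $\{A\in Y:|A\setminus B_i|\le k\}$ and is already closed in $Y$ (finiteness of $A\setminus B_i$ forces $A'\subset B_i$, so membership in $Z$ is automatic), so the closed nowhere dense copy of $Y$ that you build in each clopen $V\subset X$ for the first homeomorphism already carries closed nowhere dense copies of every $\mathcal D_{i,k}$ --- which is exactly how the paper handles its $\mathcal B_k$.

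One technical slip worth fixing: the map $A\mapsto(A\cap B_i,S)$ with $S=A\setminus(B_i\cup\{q\})$ is not a homeomorphism onto $\mathcal A_1(B_i)\times\mathcal F_{\le k-1}\bigl((\mathbb Q\setminus B_i)\setminus\{q\}\bigr)$ if the empty set is adjoined to the second factor as an isolated point: for $s_n\to q$ with $s_n\in\mathbb Q\setminus B_i$ one has $C\cup\{q\}\cup\{s_n\}\to C\cup\{q\}$ in $\mathcal D_{i,k}$, while $\{s_n\}\not\to\emptyset$. Take the second coordinate to be $A\setminus B_i$ instead, landing in $\{T\in\mathcal F(\mathbb Q\setminus B_i):q\in T,\ |T|\le k\}$, which for $k\ge2$ is countable, metrizable and dense-in-itself, hence homeomorphic to $\mathbb Q$; with this adjustment the rest of your argument goes through.
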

\begin{proof}
We have:
$$\mathcal A_1(\mathbb Q)^{\{q\}}=\bigcup_{p\in\mathbb Q\setminus\{q\}} \mathcal A_1(\mathbb Q)^{\{p,q\}}\quad\text{and}\quad \mathcal A_1(\mathbb Q)= \bigcup_{p\in\mathbb Q}\mathcal A_1(\mathbb Q)^{\{p\}}.$$
Each $\mathcal A_1(\mathbb Q)^{\{p,q\}}$ is closed and nowhere dense in  $\mathcal A_1(\mathbb Q)^{\{q\}}$. Similarly, each $\mathcal A_1(\mathbb Q)^{\{p\}}$ is closed and nowhere dense in  $\mathcal A_1(\mathbb Q)$.
If $\mathcal U= \langle U_1,\dots U_m\rangle\cap \mathcal A_1(\mathbb Q)^{\{q\}}$ is nonempty for nonempty disjoint clopen subsets $U_i$ of $\mathbb Q$ and $q\in U_1$, then, as in~\eqref{eq:copy},
$$\{A\cup\{u_2,\dots,u_m\}: A\in \mathcal A_1(U_1) \cap  \mathcal A_1(\mathbb Q)^{\{q\}}\}$$
 is a closed and nowhere dense copy  of $\mathcal A_1(\mathbb Q)^{\{p\}}$ in
$\mathcal U$.
 Analogously, each nonempty clopen set $\langle U_1,\dots U_m\rangle\cap\mathcal A_1(\mathbb Q)$ in $\mathcal A_1(\mathbb Q)$ contains a closed nowhere dense copy of
$\mathcal A_1(\mathbb Q)^{\{p,q\}}$. Now apply Lemma~~\ref{l:Engelen}.

To prove the second equivalence,  represent $\mathcal A_1(\mathbb Q)^{\{q\}}\setminus \mathcal A_1(\mathbb Q,\{q\})$  as a union $\bigcup_{k\in\mathbb N} \mathcal B_k$, where
$$\mathcal B_k= \bigl\{A\in \mathcal A_1(\mathbb Q)^{\{q\}}: A\subset \{q\}\cup \bigl(\mathbb Q\setminus \bigl(q-\frac{\sqrt{2}}{k}, q+\frac{\sqrt{2}}{k}\bigr)\bigr)\bigr\}.
$$
One can easily check that each $\mathcal B_k$ is closed, nowhere dense in $\mathcal A_1(\mathbb Q)^{\{q\}}\setminus \mathcal A_1(\mathbb Q,\{q\})$ as well as it can be embedded as a closed nowhere dense subset in each nonempty clopen subset of $\mathcal A_1(\mathbb Q)$. Conversely, each nonempty clopen subset of $\mathcal A_1(\mathbb Q)^{\{q\}}\setminus \mathcal A_1(\mathbb Q,\{q\})$ contains a closed nowhere dense copy of $\mathcal A_1(\mathbb Q)^{\{p\}}$.

\end{proof}

  A map  $\mathcal A_1(\mathbb Q,\{0\})\times \mathbb Q \to \mathcal A_1(\mathbb Q)$  given by the translation $(A,q)\mapsto A+q$ is a continuous bijection (it is not a homeomorphism, though).  Hence, by Theorem~\ref{Ksigma1}, Lemma~\ref{lem} and Proposition~\ref{p:q}, we get

\begin{corollary}
$\mathcal A_1(\mathbb Q)$ is a one-to-one continuous image of $\mathbb Q^\w$. Equivalently,  $\cl\left(\mathcal A_1(\mathbb Q,\{q\}\right)$ is a one-to-one continuous image of $\mathcal A_1(\mathbb Q,\{q\})$.
 \end{corollary}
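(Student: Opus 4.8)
The plan is to read both assertions off the translation bijection mentioned just before the statement, together with the structural results already established. First I would fix the map
$$t\colon \A_1(\Q,\{0\})\times\Q\to\A_1(\Q),\qquad t(A,q)=A+q,$$
and check that it is a continuous bijection. Continuity is clear, since translation by a vector is a homeomorphism of $\Q$ and the induced operation on $\K(\Q)$ depends continuously on that vector in the Vietoris topology. For surjectivity, note that any $A\in\A_1(\Q)$ satisfies $A'=\{p\}$ for a single rational $p$ (as $|A'|=1$ and $A\subset\Q$); then $A-p\in\A_1(\Q,\{0\})$ and $A=t(A-p,p)$. For injectivity, observe that $(A+q)'=\{q\}$ whenever $A'=\{0\}$, so the second coordinate $q$ is recovered as the unique accumulation point of $t(A,q)$, after which $A$ is recovered by translating back. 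The only point worth flagging is that $t$ is genuinely not a homeomorphism, but this is irrelevant to the corollary.

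Next I would identify the domain. Applying Theorem~\ref{Ksigma1}(1) to the dense-in-itself, $0$-dimensional $\sigma$-compact space $\Q$ with $F=\{0\}$ gives $\A_1(\Q,\{0\})\cong\Q^\w$; since a countable product of copies of $\Q$ indexed by any countably infinite set is again $\Q^\w$, the extra factor is absorbed, so $\A_1(\Q,\{0\})\times\Q\cong\Q^\w\times\Q\cong\Q^\w$. Composing $t$ with such a homeomorphism exhibits $\A_1(\Q)$ as a one-to-one continuous image of $\Q^\w$, which is the first assertion.

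Finally, the ``equivalently'' clause is just a transport of the first assertion along homeomorphisms supplied by the other cited results. By Lemma~\ref{lem} we have $\cl\bigl(\A_1(\Q,\{q\})\bigr)=\A_1(\Q)^{\{q\}}$, and by Proposition~\ref{p:q} the latter is homeomorphic to $\A_1(\Q)$; on the other side, Theorem~\ref{Ksigma1}(1) gives $\A_1(\Q,\{q\})\cong\Q^\w$. Hence the statement that $\cl\bigl(\A_1(\Q,\{q\})\bigr)$ is a one-to-one continuous image of $\A_1(\Q,\{q\})$ becomes, under these homeomorphisms, exactly the statement that $\A_1(\Q)$ is a one-to-one continuous image of $\Q^\w$, so the two formulations are equivalent and both hold. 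I do not anticipate a genuine obstacle here, since all substantive content is already carried by Theorem~\ref{Ksigma1}(1) and Proposition~\ref{p:q}; the only step demanding a little care is the bijectivity of $t$, specifically the observation that a member of $\A_1(\Q)$ has a single well-defined rational accumulation point, which simultaneously recovers the translation vector and the normalized set.
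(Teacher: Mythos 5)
Your proposal is correct and follows essentially the same route as the paper: the paper likewise exhibits the translation map $(A,q)\mapsto A+q$ as a continuous bijection from $\mathcal A_1(\mathbb Q,\{0\})\times\mathbb Q$ onto $\mathcal A_1(\mathbb Q)$ and then invokes Theorem~\ref{Ksigma1}, Lemma~\ref{lem} and Proposition~\ref{p:q}, exactly the ingredients you use. You merely spell out the bijectivity check and the absorption $\mathbb Q^\w\times\mathbb Q\cong\mathbb Q^\w$, which the paper leaves implicit.
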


\section{Preliminaries related to strongly universal and absorbing sets}
 From now on, all spaces are assumed to be metric separable and all maps  continuous.

We  recall a basic terminology and facts related to absorbing sets. The reader is referred to~\cite{BGM, BC,  BRZ, M}  for more  details.

The standard Hilbert cube $\I^\w$ ($\I=[0,1]$) is considered  with  the metric $$d(x,y)=\sum_{k\in\w} \frac{|x_k-y_k|}{2^{k}}.$$
A map  $f:X\to Y$ is  \emph{approximated  arbitrarily closely} by  maps  with property $\mathcal P$ if for  any open cover $\mathcal U$ of $Y$ there is a map  $g:X\to Y$ with property $\mathcal P$ such that $f$ is  $\mathcal  U$-close to $g$, i.e., for each $x\in X$ there is $U\in \mathcal U$ containing  $\{f(x),g(x)\}$.

 A closed subset $B\subset X$ is a (\emph{strong}) $Z$-{\it set} in $X$ if the identity map of $X$ can be approximated arbitrarily closely by   maps $f:X\to X$ such that  $B\cap f(X) =\emptyset$ ($B\cap \cl(f(X)) =\emptyset$).

 An embedding $f:X\to Y$ is a $Z$-embedding if $f(X)$ is a $Z$-set in $Y$. A countable union of (compact)  $Z$-sets in $X$ will be called a ($\sigma$-compact) $\sigma Z$-{\it set} in $X$.

 A subset $A\subset Y$ is \emph{homotopy dense} in $Y$ if  there is a deformation $H:Y\times [0,1]\to Y$ such that $H(Y \times (0,1])\subset A$ (a deformation \emph{through} $A$).

\

\begin{fact}~\cite[Lemma 2.6]{C1}\label{f6}
If $M$ is an ANR, a subset $X\subset M$ is homotopy dense in $M$ and $Z$  is a strong $Z$-set in $M$, then $Z\cap X$ is a strong $Z$-set in $X$.
\end{fact}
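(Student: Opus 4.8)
The plan is to verify directly the definition of a strong $Z$-set. First note that $Z\cap X$ is closed in $X$ because $Z$ is closed in $M$, so the only real issue is the approximation property. Fix the deformation $H\colon M\times[0,1]\to M$ witnessing that $X$ is homotopy dense, i.e. $H_0=\id_M$ and $H(M\times(0,1])\subset X$, and fix a metric on $M$. Given a prescribed degree of closeness (an open cover of $X$, or equivalently a continuous majorant $\e\colon X\to(0,\infty)$), I want to produce a map $g\colon X\to X$ that is $\e$-close to $\id_X$ with $\cl(g(X))\cap Z=\emptyset$; since $g(X)\subset X$ this yields $\cl_X(g(X))\cap(Z\cap X)=\emptyset$, which is exactly what is required. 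The idea is to first push $\id_M$ off $Z$ using that $Z$ is a strong $Z$-set in $M$, and then use $H$ to drag the result back into $X$ while keeping it off $Z$.

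Concretely, I would first apply the strong $Z$-set property of $Z$ in $M$ to obtain a map $f\colon M\to M$, as close to $\id_M$ as desired, with $C:=\cl(f(M))$ disjoint from $Z$. Since $C$ and $Z$ are disjoint closed subsets of the metric, hence normal, space $M$, there is an open set $W$ with $C\subset W\subset\cl(W)\subset M\setminus Z$. The set $O=\{(y,s)\in M\times[0,1]:H(y,s)\in W\}$ is open and contains $C\times\{0\}$ because $H(y,0)=y\in C\subset W$; using paracompactness of $M$ I can choose a continuous function $\sigma\colon M\to(0,1]$ so small that $H(y,\sigma(y))\in W$ and $H(y,\sigma(y))$ is as close to $y$ as I wish, for every $y\in C$. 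Now set $g(x)=H(f(x),\sigma(f(x)))$ for $x\in X$. Because $\sigma>0$ and $H(M\times(0,1])\subset X$, the map $g$ sends $X$ into $X$; because $f(x)\in C$ we have $g(x)\in W$; and $g$ is $\e$-close to $\id_X$ since $f$ is close to $\id_M$ while the $H$-push moves points arbitrarily little.

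Finally, $g(X)\subset W$ gives $\cl(g(X))\subset\cl(W)\subset M\setminus Z$, so $\cl_X(g(X))\cap(Z\cap X)=\emptyset$; as the prescribed closeness was arbitrary, $Z\cap X$ is a strong $Z$-set in $X$. I expect the main obstacle to be precisely the \emph{strong} part, namely forcing the \emph{closure} of $g(X)$ to miss $Z$ rather than only $g(X)$ itself: in a noncompact ambient space a perturbed image can accumulate on $Z$ while avoiding it pointwise. This is exactly what the normal separation by $W$ (with $\cl(W)\cap Z=\emptyset$) together with the confinement $g(X)\subset W$ is designed to defeat. The remaining points---extending the control $\e$ from $X$ to $M$, translating between open covers and continuous majorants, and the partition-of-unity construction of $\sigma$---are routine metric bookkeeping, and the ANR hypothesis on $M$ (which also makes the homotopy-dense $X$ an ANR) is what guarantees that these approximation notions behave as expected.
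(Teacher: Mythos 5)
The paper does not actually prove this statement; it quotes it verbatim from Cauty \cite[Lemma 2.6]{C1}, so your attempt can only be measured against what a correct proof must contain. Your overall scheme --- push $\id_M$ off $Z$ into $C=\cl(f(M))$, separate $C$ from $Z$ by an open $W$ with $\cl(W)\cap Z=\emptyset$, then drag $C$ into $X\cap W$ with the homotopy $H$ --- is the natural one, and the part you flag as the main obstacle (forcing the \emph{closure} of $g(X)$ to miss $Z$) is in fact handled correctly by the confinement $g(X)\subset W$. The genuine gap is in the step you dismiss as ``routine metric bookkeeping'': the approximation control. A strong $Z$-set in $X$ requires $g$ to be $\mathcal U$-close to $\id_X$ for an \emph{arbitrary} open cover $\mathcal U$ of $X$, equivalently $d(g(x),x)<\e(x)$ for an arbitrary continuous $\e\colon X\to(0,\infty)$. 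But your $g$ is assembled from a map $f\colon M\to M$ supplied by the strong $Z$-set property of $Z$ \emph{in $M$}, so the only displacement bound you can impose on $f$ is $d(f(y),y)<\delta(y)$ for some continuous $\delta\colon M\to(0,\infty)$, and a continuous positive function on $X$ need not admit a continuous positive minorant on $M$. Indeed, homotopy density forces $X$ to be dense in $M$, and in every nontrivial application $X$ is a proper dense subset; picking $z\in M\setminus X$ and $\e(x)=d(x,z)$, every continuous $\delta\colon M\to(0,\infty)$ satisfies $\delta(x)>\e(x)$ for $x\in X$ near $z$. For such covers $\mathcal U$ your construction cannot be shown $\mathcal U$-close to $\id_X$; as written it only proves the statement for the weaker, uniform notion of approximation (constant $\e>0$), which is not the definition used in the paper.

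This is precisely where the content of the lemma lies, and it can be repaired, but not by extending $\e$ to $M$. One way: replace $M$ by the open set $W_\e=\bigcup_n W_n$, where $W_n$ is open in $M$ with $W_n\cap X=\{x\in X:\e(x)>2^{-n}\}$. Then $X\subset W_\e$, the same $H$ witnesses that $X$ is homotopy dense in $W_\e$ (since $H(W_\e\times(0,1])\subset X\subset W_\e$), and a partition of unity subordinated to $\{W_n\}$ produces a continuous $\tilde\e\colon W_\e\to(0,\infty)$ with $\tilde\e\le\e$ on $X$; your argument then runs inside $W_\e$ --- \emph{provided} one first knows that $Z\cap W_\e$ is a strong $Z$-set in the open subspace $W_\e$, which is itself a nontrivial fact with exactly the same cover-refinement difficulty. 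You should either carry out such a localization or cite the corresponding restriction property for strong $Z$-sets; without it the proof is incomplete.
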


 Let
$M$ be an  absolute neighborhood retract (ANR).
 It is  known that
\begin{itemize}
\item
$B$ is a $Z$-set in $M$ if and only if $M\setminus B$ is homotopy dense in $M$ (see \cite[Corollary 3.3]{Tor}),
\item
 if $M$ is completely metrizable and $B$ is a $\sigma Z$-set in $M$,  then   $M\setminus B$ is homotopy dense in $M$ (\cite[Exercise 3, p. 31]{BRZ}),
\item
 if $A$ is homotopy dense in $M$ then $A$ is an ANR (an absolute retract (AR) if $M$ is an AR) (see~\cite[Theorem 4.1.6]{M}).
\end{itemize}

A space $X$ has the \emph{strong discrete approximation property} (SDAP) if any  map $f:\bigoplus_{n\in\N} \I^n\to X$ from the topological sum of finite-dimensional cubes can be approximated  arbitrarily closely by maps $g:\bigoplus_{n\in\N} \I^n\to X$ such that the family $\{g(\I^n): n\in\N\}$ is discrete.

\begin{fact}\cite[Proposition 1.7]{BM},~\cite[1.4.1.]{BRZ}\label{f7}
If $M$ is an ANR with SDAP or $M$ is locally compact then every $Z$-set in $M$ is a strong  $Z$-set in $M$.
\end{fact}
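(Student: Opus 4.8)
The statement combines two cases and is standard in the theory of absorbing sets; here is the argument I would reconstruct. Every strong $Z$-set is a $Z$-set, so only the converse is at issue: given a $Z$-set $B\subset M$ and an open cover $\mathcal U$ of $M$, I must produce a map $g:M\to M$ that is $\mathcal U$-close to $\id_M$ and satisfies $\cl(g(M))\cap B=\emptyset$. The whole plan hinges on one elementary remark: if the image $g(M)$ can be written as the union of a \emph{discrete} family $\{C_i\}_{i\in\N}$ of compacta with each $C_i\cap B=\emptyset$, then, a discrete family of closed sets having closed union, $g(M)$ is itself closed and disjoint from $B$, whence $\cl(g(M))\cap B=\emptyset$ for free. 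Thus the problem reduces to approximating $\id_M$ by a map whose image splits into a discrete family of compacta, each avoiding $B$.

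To create such a family I would first exploit that $M$ is a separable ANR: for a sufficiently fine cover there are a locally finite simplicial complex $K$ and maps $M\xrightarrow{p}|K|\xrightarrow{q}M$ with $q\circ p$ being $\mathcal U$-close to $\id_M$. Restricting $q\circ p$ to the (preimages of the) simplices of $K$ presents a map of the form $\bigoplus_i \I^{n_i}\to M$ whose cell-images cover $M$; this is exactly the kind of map to which the SDAP applies. To move these images off $B$ I would use that, $B$ being a $Z$-set, $M\setminus B$ is homotopy dense in $M$, so there is a deformation $H:M\times[0,1]\to M$ with $H_0=\id_M$ and $H_t(M)\subset M\setminus B$ for all $t\in(0,1]$.

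In the SDAP case, I would run these two tools together: apply the SDAP to the cell map to make the family of cell-images discrete, and compose with $H_t$ for a small $t>0$ to push each (compact) cell-image into $M\setminus B$, choosing $t$ small enough that $\mathcal U$-closeness survives and the perturbation preserves discreteness. The resulting $g$ then has $g(M)=\bigcup_i g(\I^{n_i})$, a discrete family of compacta disjoint from $B$, and the remark above finishes the case. In the locally compact case the SDAP is replaced by local compactness: when $M$ is compact the claim is immediate, since every map $g:M\to M$ has compact, hence closed, image, so a $Z$-set witness is already a strong $Z$-set witness; for noncompact locally compact $M$ I would exhaust $M=\bigcup_n K_n$ by compacta and use local compactness to arrange the images of the compact pieces into a locally finite family of compacta, each pushed off $B$ by the same deformation $H$, again reducing to the remark.

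The main obstacle is to carry out the SDAP approximation and the push-off-$B$ deformation \emph{simultaneously}: making the cell-images discrete must not reintroduce contact with $B$, and pushing off $B$ must not destroy either the discreteness of the family or the $\mathcal U$-closeness to $\id_M$. Interleaving the two through a single control function (letting the deformation parameter vary over $M$ and shrink near $B$) and passing to a limit is the technical heart of the proof; by comparison, the verification that a discrete family of compacta missing $B$ has closed union missing $B$ is elementary but is the structural observation that makes the whole scheme work.
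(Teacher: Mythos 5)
The paper does not prove this statement; it is quoted verbatim from \cite[Proposition 1.7]{BM} and \cite[1.4.1]{BRZ}, so there is no in-paper argument to compare against. Your reconstruction follows the standard strategy of those sources: reduce to producing an approximation of $\id_M$ whose image is a locally finite union of compacta missing $B$ (so that the union is already closed), obtain the compacta from a dominating locally finite polyhedron, and push them off $B$ via the homotopy density of $M\setminus B$. The reduction itself and the locally compact case are essentially fine, although for the noncompact locally compact case the increasing exhaustion $\{K_n\}$ cannot itself give a locally finite family of images (it is increasing); you should either pass to the annuli $K_{n+1}\setminus \inte K_n$ or, more cleanly, observe that any map $\mathcal U$-close to $\id_M$ for a locally finite cover $\mathcal U$ by relatively compact sets is proper and hence has closed image.

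The genuine gap is in the SDAP case, at exactly the step you defer. You propose to apply SDAP to the map $\bigoplus_i \I^{n_i}\to M$ obtained by restricting $q$ to the simplices of $K$ so as to make the family of cell-images \emph{discrete}. This cannot work as stated, for two reasons. First, SDAP perturbs each cell map independently, so the approximated maps need not agree on shared faces and do not reassemble into a continuous map on $|K|$ (hence not into a map $M\to M$). Second, even granting continuity, adjacent simplices have intersecting images, so the family of images of \emph{all} simplices can never be discrete; the correct target is a \emph{locally finite} family, which still makes the union closed. The actual proof in \cite{BM} and \cite{BRZ} repairs both points by an induction over the skeleta of $K$, using a relative (rel subcomplex) version of the discrete approximation property available in ANRs with SDAP, so that at each stage the new top-dimensional cells are perturbed keeping their boundaries fixed, their images are kept off $B$ by the deformation through $M\setminus B$, and the accumulated family of cell-images stays locally finite. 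Identifying this skeleton-wise relative construction is the missing idea; the limit/interleaving difficulty you flag is real but secondary to it.
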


We will also need
\begin{fact}\label{f8}
If $X$ is a homotopy dense subset of a locally compact ANR $M$ and there are $Z$-sets $Z_i$ in $M$ such that $X\subset \bigcup_{i\in\N}Z_i$, then
$X$ has SDAP.
\end{fact}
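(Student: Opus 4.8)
The plan is to use the two preceding Facts to transfer the ambient $Z$-set structure \emph{into} $X$, turning the hypothesis into the statement that $X$ is an increasing union of its own strong $Z$-sets, and then to prove SDAP by an approximation argument whose only genuinely delicate point is forcing the approximating images to accumulate in $M\setminus X$.

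First I would carry out the reduction. Since $M$ is locally compact, Fact~\ref{f7} upgrades each $Z_i$ to a \emph{strong} $Z$-set in $M$; intersecting with a compact exhaustion of $M$ (a closed subset of a $Z$-set is again a $Z$-set) I may assume each $Z_i$ has compact closure, and passing to the finite unions $A_i=Z_1\cup\dots\cup Z_i$ (finite unions of strong $Z$-sets are strong $Z$-sets) I obtain an increasing sequence of strong $Z$-sets with $X\subseteq\bigcup_i A_i$. As $X$ is homotopy dense in the ANR $M$, it is itself an ANR, and by Fact~\ref{f6} each $X_i:=A_i\cap X$ is a strong $Z$-set in $X$, with $X=\bigcup_i X_i$ an increasing union. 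Note also that each $A_i$, being a strong $Z$-set, is nowhere dense in $M$, so $M\setminus X\supseteq M\setminus\bigcup_i A_i$ is dense in $M$. The task now reduces to deducing SDAP from this filtration.

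Next, given $f\colon\bigoplus_{n\in\N}\I^n\to X$ and an open cover $\mathcal U$ of $X$ (refined using a metric on $M$), I would build $g$ piece by piece so as to guarantee two properties: that the compacta $\cl_M(g(\I^n))$ are pairwise disjoint, and that the accumulation set $L:=\bigcap_{N}\cl_M\bigl(\bigcup_{n\ge N}g(\I^n)\bigr)$ is disjoint from $X$. The clean payoff of the second property is where local compactness enters decisively: given any $x\in X\subseteq M\setminus L$, choose an $M$-neighbourhood $V\ni x$ with $\cl_M V$ compact and $\cl_M V\cap L=\emptyset$; then only finitely many $g(\I^n)$ can meet $\cl_M V$, for otherwise an infinite selection of points would subconverge to a point of $\cl_M V\cap L$. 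Thus $\{g(\I^n)\}$ is locally finite in $X$, and being also closed and pairwise disjoint it is discrete, which is exactly SDAP.

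The main obstacle is arranging $L\cap X=\emptyset$ while keeping $g$ $\mathcal U$-close to $f$. One cannot read discreteness off the filtration $\{X_i\}$ directly, since each $X_i$ is nowhere dense in $X$: a compact piece may hug $X_m$ arbitrarily closely from the outside, so merely pushing $g(\I^n)$ off $X_m$ does not prevent accumulation at a point of $X_m\subseteq X$. The point is rather that when infinitely many pieces $f(\I^n)$ cluster near some $x\in X$, the strong $Z$-set approximations of $\id_X$ (whose images have closures missing a prescribed $X_m$) let me nudge these pieces, within their $\mathcal U$-elements, into the \emph{dense gaps} $M\setminus X$, so that they accumulate at nearby points of $M\setminus X$ rather than at $x$ itself. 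Carrying this out for every cluster simultaneously — choosing the escape targets and the radii of the nudges so that the individual closures stay pairwise disjoint and no point of $X$ is left as an accumulation point — is an inductive general-position argument over the finite-dimensional cubes $\I^n$, and it is the step I expect to require the most care.
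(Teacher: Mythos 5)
Your reduction is exactly the paper's: upgrade each $Z_i$ to a strong $Z$-set in $M$ via Fact~\ref{f7} (using local compactness), intersect with $X$ and invoke Fact~\ref{f6} to get strong $Z$-sets $X_i$ in the ANR $X$ (an ANR because it is homotopy dense in $M$), so that $X$ is a countable union of its own strong $Z$-sets. At precisely this point the paper stops and cites \cite[Theorem 1.4.10]{BRZ}, which states verbatim that an ANR representable as a countable union of strong $Z$-sets in itself has SDAP. So the first half of your argument coincides with the intended proof, and if you simply quoted that theorem you would be done.

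Instead you attempt to reprove the quoted theorem, and that attempt is where the proposal falls short of a proof. The strategy you describe --- build $g$ so that the accumulation set $L$ of $\bigcup_{n\ge N}g(\I^n)$ misses $X$, then use local compactness of $M$ to convert this into local finiteness --- is sound, and you correctly identify the real difficulty: individual pieces can be pushed off a fixed $X_m$ and still accumulate at a point of $X_m$, so one must control closures of \emph{tails of the union}, with indices $i_n\to\infty$ and moves shrinking fast enough that $\cl\bigl(\bigcup_{n\ge N}g(\I^n)\bigr)$ eventually misses each $X_i$. But you then defer exactly this step (``an inductive general-position argument \dots the step I expect to require the most care''), and that step \emph{is} the content of the theorem being cited; nothing in your sketch actually carries it out. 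As written, the argument is complete only up to the reduction; the remainder should either be replaced by the reference to \cite[Theorem 1.4.10]{BRZ} or supplied in full. (Minor points: the detour through a compact exhaustion to make the $Z_i$ have compact closure is unnecessary for the reduction, and discreteness in $X$ also requires noting that distinct pieces can be separated --- pairwise disjointness of closed sets alone does not give discreteness without the local finiteness you establish, so keep both halves of that argument.)
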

The above fact can be easily derived from Facts~\ref{f7},~\ref{f6} and~\cite[Theorem 1.4.10]{BRZ} which says that each ANR $X$ that can be represented as a union of countably many strong $Z$-sets in $X$ has SDAP.

\

The famous Toruńczyk's theorem~\cite{Tor1} says that a  space $X$ is an $\\\R^\w$-manifold if and only if $X$ is a Polish ANR with SDAP.

The following theorem was proved by the first author~\cite{BRZ}.
\begin{theorem}\label{B1}
A space $X$ is an ANR with SDAP if and only if $X$ is homeomorphic to a homotopy dense subset of an $\R^\w$-manifold.
\end{theorem}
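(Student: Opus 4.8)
The plan is to prove the two implications separately, using Toru\'nczyk's theorem~\cite{Tor1} as the bridge: since that theorem identifies $\R^\w$-manifolds with Polish ANRs having SDAP, the whole statement amounts to transporting the pair of properties ``ANR $+$ SDAP'' across a homotopy dense inclusion, in both directions.

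\emph{Sufficiency.} Suppose $X$ is homotopy dense in an $\R^\w$-manifold $M$. By Toru\'nczyk's theorem $M$ is a Polish ANR with SDAP. Since a homotopy dense subset of an ANR is again an ANR (recall the last of the three bullet points preceding Fact~\ref{f6}), $X$ is an ANR. To verify SDAP I would fix $f\colon A\to X$ with $A=\bigoplus_{n\in\N}\I^n$ and an open cover $\mathcal U$ of $X$, view $f$ as a map into $M$, extend $\mathcal U$ to an open cover of $M$, and use SDAP of $M$ to approximate $f$ by some $g\colon A\to M$ for which $\{g(\I^n):n\in\N\}$ is discrete in $M$. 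Using collectionwise normality of $M$, enclose its members in a discrete family of open sets $W_n\supset g(\I^n)$. Let $H\colon M\times[0,1]\to M$ be a deformation through $X$, which exists since $X$ is homotopy dense. For each $n$ \emph{separately} I choose $t_n>0$ so small that $H(\,\cdot\,,t_n)$ carries the compactum $g(\I^n)$ into $W_n$ and stays within the tolerance prescribed by $\mathcal U$; the decisive point is that, $A$ being a topological sum, the maps $x\mapsto H(g(x),t_n)$ on the summands $\I^n$ glue to one continuous map $g'\colon A\to X$. Its image family lies in $\{W_n\}$, hence is discrete in $M$ and a fortiori in $X$, and $g'$ is $\mathcal U$-close to $f$, so $X$ has SDAP.

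\emph{Necessity.} Suppose $X$ is an ANR with SDAP. The first step is to embed $X$ as a homotopy dense subset of a Polish ANR $M$: I would realize $X$ inside $\R^\w$, take a $G_\delta$-set $M$ with $X\subset M\subset\R^\w$, and arrange, via a neighbourhood retraction and the homotopy extension property of ANRs, that $M$ is an ANR in which $X$ is homotopy dense. The second step is to show that $M$ itself has SDAP. Here I would start from a map $f\colon A\to M$, deform it into $X$, apply SDAP of $X$ to obtain a family discrete in $X$, and then perturb it within $M$ so that it becomes discrete in $M$; this perturbation is performed with the $Z$-set machinery of Facts~\ref{f6}--\ref{f8} together with homotopy density, which let the images be pushed off the accumulation set $M\setminus X$. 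Once $M$ is a Polish ANR with SDAP, Toru\'nczyk's theorem yields that $M$ is an $\R^\w$-manifold, with $X$ homotopy dense in it, as required.

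The main obstacle lies in the necessity direction. Building the homotopy dense Polish ANR completion $M$ is already delicate, but the genuinely hard point is the transfer of SDAP to $M$: a family of compacta discrete in the dense subset $X$ need \emph{not} be discrete in the ambient space $M$, because its members may accumulate on points of $M\setminus X$. Upgrading discreteness from $X$ to $M$ is precisely where homotopy density and the $Z$-set manipulations of Facts~\ref{f6}--\ref{f8} are indispensable, and this is the step I expect to demand the most care.
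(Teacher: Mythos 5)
The paper gives no proof of Theorem~\ref{B1}; it is quoted from~\cite{BRZ}. Your sufficiency argument is essentially the standard one and is sound, modulo the small point that a cover of $X$ extends only to a cover of an open neighbourhood $W\supseteq X$ in $M$ rather than of all of $M$; since $W$ is again an $\R^\w$-manifold containing $X$ homotopy densely, nothing is lost there.

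The necessity direction, however, has a genuine gap, and it sits exactly at the step you single out as hard. Your plan is: embed $X$ homotopy densely into some Polish ANR $M$, then upgrade SDAP from $X$ to $M$, then invoke Toru\'nczyk's theorem. But SDAP does not pass from a homotopy dense subset to the ambient Polish ANR, and no perturbation argument can make it do so. Take $X=(0,1)^\w\subseteq\R^\w$. Then $X\cong\R^\w$ has SDAP, and its closure $M=\I^\w$ is a $G_\delta$ ANR in $\R^\w$ containing $X$ as a homotopy dense subset (deform by $H(x,t)=(1-t)x+t(\tfrac12,\tfrac12,\dots)$), so $M$ is a perfectly legitimate output of your first step. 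Yet $\I^\w$ is compact, hence admits no infinite discrete family of nonempty compacta and fails SDAP outright; in particular a family discrete in $X$ can never be ``perturbed within $M$'' to become discrete in $M$, whatever $Z$-set machinery from Facts~\ref{f6}--\ref{f8} one brings to bear. The obstruction is structural, not technical: the Polish ANR envelope must be \emph{constructed} so as to have SDAP (equivalently, to be an $\R^\w$-manifold) from the outset, which is what the argument in~\cite{BRZ} actually does; it cannot be obtained by completing first and transferring SDAP afterwards.
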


Let $\mathcal C$ be a topological class    of spaces. A space $X$ is \emph{strongly}  $\mathcal C$-\emph{universal} if for each $C\in \mathcal C$ and closed $B\subset C$, every map $f:C\to X$ which is a $Z$-embedding on $B$ can be approximated  arbitrarily closely by   $Z$-embeddings $g:C\to X$ such that $g\upharpoonright B= f\upharpoonright B$.

A space $X$ is called $\mathcal C$-\emph{absorbing} if
\begin{itemize}
\item
$X$ is an ANR with SDAP,
\item
$X=\bigcup_{n\in \N} X_n$, where each $X_n$ is a $Z$-set in $X$ and $X_n\in \C$,
\item
$X$ is strongly  $\mathcal C$-universal.
\end{itemize}

A fundamental theorem of M. Bestvina and J. Mogilski~\cite{BM} says that a $\C$-absorbing space is topologically unique up to a homotopy type. In particular,

\begin{theorem}\label{t:BM}
Any two $\C$-absorbing AR's are homeomorphic.
\end{theorem}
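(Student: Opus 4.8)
The plan is to reduce the assertion to an \emph{ambient} uniqueness statement inside a single model manifold and then run a back-and-forth argument driven by strong $\C$-universality. Let $X$ and $Y$ be $\C$-absorbing AR's. By definition each is an ANR with SDAP, so by Theorem~\ref{B1} I may regard $X$ (resp. $Y$) as a homotopy dense subset of an $\R^\w$-manifold $M_X$ (resp. $M_Y$). A homotopy dense inclusion is a homotopy equivalence, and since $X$ and $Y$ are AR's (hence contractible), the manifolds $M_X$ and $M_Y$ are contractible $\R^\w$-manifolds, so each is homeomorphic to $\R^\w$. Fixing such identifications, I may assume $X$ and $Y$ are homotopy dense subsets of one and the same copy $M=\R^\w$. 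It then suffices to produce a homeomorphism $h\colon M\to M$ with $h(X)=Y$, for its restriction is the desired homeomorphism $X\cong Y$.

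Next I would isolate the two structural features that make the sets absorbing: the skeleta $X=\bigcup_n X_n$ and $Y=\bigcup_n Y_n$, increasing unions of $Z$-sets belonging to $\C$, together with strong $\C$-universality of each. The technical core is an \emph{absorption lemma}: given a compact $Z$-set $K\subset M$ with $K\in\C$, a closed subset $B\subset K$ on which a given embedding already lands in the target set, and an open cover $\mathcal U$ of $M$, strong $\C$-universality lets me $\mathcal U$-approximate the map by a $Z$-embedding of $K$ into (say) $Y$ agreeing with the old one on $B$; since $X,Y$ are homotopy dense, such intrinsic $Z$-sets are $Z$-sets of $M$ as well, and the $Z$-set unknotting theorem in the $\R^\w$-manifold $M$ then upgrades this embedding to an ambient homeomorphism of $M$ supported near $K$. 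Here Fact~\ref{f7} (via SDAP) is what makes the relevant $Z$-sets \emph{strong}, so that unknotting applies.

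The back-and-forth itself builds homeomorphisms $g_0=\id,g_1,g_2,\dots$ of $M$ so that at even stages $g_{2k}$ pushes the skeletal piece $X_k$ into $Y$ while leaving the already-matched part essentially fixed, and at odd stages $g_{2k+1}^{-1}$ pushes $Y_k$ into $X$; each stage is produced by the absorption lemma applied to the image of the relevant skeletal $Z$-set. Choosing the successive approximations to shrink fast enough (the inductive convergence criterion for homeomorphisms of a complete ANR), the sequence $g_n$ converges to a homeomorphism $h=\lim_n g_n$, and the dovetailing of the two directions forces $h(X)\subset Y$ and $h^{-1}(Y)\subset X$, hence $h(X)=Y$.

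The main obstacle is exactly this convergence step: one must freeze each correction on an ever-growing compact portion of $M$ while retaining enough freedom to move the next skeletal piece, so that the infinite composition is a genuine homeomorphism rather than merely a continuous bijection, and so that in the limit \emph{all} of $X$ is carried onto \emph{all} of $Y$. Completeness of $M=\R^\w$, the strong $Z$-set condition coming from SDAP, and the precise unknotting estimates are what make this limit control possible. The AR hypothesis enters only through the opening reduction, ensuring the single model manifold is contractible so that no homotopy-theoretic obstruction can block the matching of the two skeleta.
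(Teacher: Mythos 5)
The paper does not actually prove this statement: it is quoted from Bestvina--Mogilski \cite{BM} (see also \cite{BRZ}), so there is no in-paper argument to compare against. Your sketch is, in substance, the standard proof from those sources: realize each space as a homotopy dense subset of an $\R^\w$-manifold via Theorem~\ref{B1}, use contractibility (this is where the AR hypothesis enters) together with the classification of $\R^\w$-manifolds by homotopy type to identify both ambient manifolds with $\R^\w$, and then run a back-and-forth absorption of the skeleta $X=\bigcup_n X_n$ and $Y=\bigcup_n Y_n$ driven by strong $\C$-universality, $Z$-set unknotting, and an inductive convergence criterion for infinite compositions of homeomorphisms. That is the right architecture. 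Two points are glossed over and would need care in a full write-up. First, the skeleta $X_n$ are $Z$-sets of $X$ and elements of $\C$, but in general they are neither compact nor closed in $M$, so your ``absorption lemma'' cannot be formulated only for compact $Z$-sets $K\subset M$ with $K\in\C$; one must invoke the intrinsic form of strong $\C$-universality (which admits arbitrary $C\in\C$ as domains) or first replace the images of the $X_n$ by suitable $Z$-sets of $M$, and one must justify that the relevant sets are (strong) $Z$-sets of the ambient manifold, which is where SDAP and Facts~\ref{f6}--\ref{f7} are really used. Second, the unknotting step needs the approximating $Z$-embeddings to agree with the already-frozen data rel the closed set $B$ and with prescribed control; this is exactly what strong $\C$-universality supplies, and you use it correctly. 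Modulo this bookkeeping, your proposal is a faithful reconstruction of the Bestvina--Mogilski uniqueness argument.
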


It is often more convenient to consider strongly universal pairs and absorbing pairs of spaces.

\textbf{From now on,    $\vec{\C}$ will denote a class of pairs $(K,C)$ such that $K$ is compact,  $C\subset K$ and $C\in \C$.}

A pair of spaces $(M,X)$ ($X\subset M$) is called
\begin{itemize}
\item
 \emph{strongly} $\vec{\C}$-\emph{universal} (some authors prefer to say $X$ is \emph{strongly} $\C$-\emph{universal in} $M$) if for each pair $(K,C)\in\vec{\C}$ and each closed $B\subset K$ every map $f:K\to M$ which is a $Z$-embedding on $B$ and satisfies $(f\upharpoonright B)^{-1}(X)=B\cap C$ can be approximated  arbitrarily closely by   $Z$-embeddings $g:K\to M$ such that $g\upharpoonright B= f\upharpoonright B$ and $g^{-1}(X)=C$.
\end{itemize}

\begin{remarks}\label{rem} In the above definition,
\begin{enumerate}
\item
if $M$ is an ANR, then  pairs $(K,C)$ can be replaced  by pairs $(\I^\w,C)$~\cite[Proposition 3.3]{BGM} ;
\item
if $M$ is an $\R^\w$- or $\I^\w$-manifold, then map $f$ can be replaced by an embedding~\cite[1.1.21, 1.1.26]{BRZ}
\end{enumerate}
\end{remarks}

Proving strong universality of pairs is usually cumbersome. An easier property is the \emph{preuniversality} which is  verified  as  a first step.

A pair $(M,X)$ is
\begin{itemize}
\item
$\vec{\C}$-\emph{preuniversal} if for any pair $(K,C)\in\vec{\C}$ there exists a map $f:K\to M$ such that $f^{-1}(X)=C$;
\item {\em everywhere $\vec{\C}$-preuniversal} if for any nonempty open set $U\subseteq X$ and pair $(K,C)\in\vec{\C}$ there exists a map $f:K\to M$ such that $f^{-1}(X)=C$.
\end{itemize}

\textbf{Henceforth, we  restrict our attention to  a Borel or projective class $\C\neq \mathbf \Pi^0_2$ containing all compacta}.

We gather several  general facts on strongly  $\vec{\C}$-universal pairs.
\begin{fact}\emph{(}\cite[Corollary 4.4]{BGM}\label{fact}
If  $M$ is an ANR (AR) and $(M,X)$ is  strongly $\vec{\C}$-universal, then $X$ and $M\setminus X$ are homotopy dense  in $M$ ANR's (AR's).
\end{fact}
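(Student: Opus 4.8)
The plan is to reduce the statement to two homotopy-density claims and then quote the ANR facts gathered just before Fact~\ref{f6}. Precisely, the third of those bulleted properties asserts that a homotopy dense subset of an ANR (AR) $M$ is itself an ANR (AR); hence, once I know that both $X$ and $M\setminus X$ are homotopy dense in $M$, the conclusion that they are ANR's, and AR's when $M$ is an AR, is immediate. So the entire content of the statement is the homotopy density of $X$ and of $M\setminus X$, and I would treat the two cases symmetrically.

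First I would extract from strong $\vec{\C}$-universality a pair of approximation properties. Since $\C$ contains all compacta, for every compactum $K$ both pairs $(K,K)$ and $(K,\emptyset)$ lie in $\vec{\C}$ (the empty set is compact, so $\emptyset\in\C$). Applying the definition of strong $\vec{\C}$-universality with the closed set $B=\emptyset$, for which all hypotheses on $f\upharpoonright B$ are vacuous, gives: every map $f:K\to M$ can be approximated arbitrarily closely by $Z$-embeddings $g:K\to M$ with $g^{-1}(X)=C$. With $C=K$ this forces $g(K)\subseteq X$, and with $C=\emptyset$ it forces $g(K)\subseteq M\setminus X$. Thus every map of a compactum into $M$ can be approximated arbitrarily closely both by maps into $X$ and by maps into $M\setminus X$; applying this to constant maps already shows that $X$ and $M\setminus X$ are dense in $M$.

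The remaining and hardest step is to upgrade these approximation properties to genuine deformations, that is, to build, for each $A\in\{X,\,M\setminus X\}$, a homotopy $H:M\times[0,1]\to M$ with $H_0=\id_M$ and $H_t(M)\subseteq A$ for $t>0$. This is the main obstacle, because $M$ is assumed only to be an ANR --- not compact, not locally compact, and without SDAP --- so the construction must be carried out over a possibly noncompact domain. The route I would follow is the classical one from the theory of homotopy-negligible sets: it suffices to show that for every open cover $\mathcal U$ of $M$ there is a map $h:M\to A$ that is $\mathcal U$-close to $\id_M$, because close maps into the ANR $M$ are joined by a small homotopy, and one then concatenates a rapidly shrinking sequence of such homotopies to obtain $H$ on $M\times(0,1]$, continuously extended by $\id_M$ at $t=0$. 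To produce such an $h$ I would invoke ANR domination: choose a locally finite simplicial complex $N$ together with maps $M\xrightarrow{\alpha}|N|\xrightarrow{\beta}M$ with $\beta\circ\alpha$ being $\mathcal U$-close to $\id_M$, and then modify $\beta$ to a map $\beta':|N|\to A$ skeleton by skeleton: on each simplex, a compactum, I apply the approximation-by-maps-into-$A$ property just established, extending across successive skeleta by the local contractibility of the ANR $M$ and the homotopy extension property, with the $Z$-embedding character of the approximants supplying the control needed to keep the pieces close and compatible. Then $h=\beta'\circ\alpha$ is the desired map, and the same construction with $M\setminus X$ in place of $X$ yields the homotopy density of $M\setminus X$, completing the proof.
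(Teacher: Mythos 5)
The paper itself gives no proof of this Fact: it is quoted verbatim from \cite[Corollary 4.4]{BGM}. Your first two steps reproduce exactly the argument that lies behind that citation. The reduction to homotopy density via the bullet ``a homotopy dense subset of an ANR (AR) is an ANR (AR)'' is the intended one, and applying strong $\vec{\C}$-universality to the pairs $(K,K)$ and $(K,\emptyset)$ with $B=\emptyset$ (both legitimate, since $\C$ contains all compacta) correctly yields that every map of a compactum into $M$ --- and, after refining the cover so that $\mathcal U$-close maps stay inside a given open set, into any open $U\subseteq M$ --- can be approximated arbitrarily closely by maps into $X$ and by maps into $M\setminus X$. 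Up to that point the proposal is sound.

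The gap is in your final step, where you try to upgrade this approximation property to a genuine deformation by hand. Two of your constructions produce maps into $M$ rather than into $A$: (i) the small homotopies joining successive approximations $h_n,h_{n+1}\colon M\to A$ are supplied by the ANR property of $M$ and take values in $M$, so the concatenated homotopy $H$ need not satisfy $H_t(M)\subseteq A$ for $t>0$; (ii) in the nerve argument, once the vertices of $N$ have been pushed into $A$, extending $\beta'$ over the $1$-skeleton ``by local contractibility of $M$ and the homotopy extension property'' again lands in $M$, and the absolute approximation property established in step 2 gives no means of re-approximating a simplex \emph{rel} its already-constructed boundary --- that would require the relative form of strong universality with $B=\partial\sigma$, which is unavailable because the restriction of the map to $\partial\sigma$ is not a $Z$-embedding; the ``$Z$-embedding character of the approximants'' supplies no such control. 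What you are attempting to re-derive is precisely Toru\'nczyk's theorem on locally homotopy negligible sets \cite{Tor}: for an ANR $M$ and $A\subseteq M$, if every map $\I^n\to U$ of a finite-dimensional cube into an open $U\subseteq M$ is approximable arbitrarily closely by maps into $U\cap A$, then $A$ is homotopy dense in $M$. Citing that theorem (which is how \cite{BGM} concludes) closes the argument; the naive skeleton-by-skeleton and concatenation proof of it, as written, does not go through.
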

\begin{fact}\cite[Corollary 6.2]{BGM}\label{fact1}
If  $M$ is an ANR, $Y\subset M$ is homotopy dense in $M$ and   $(Y,X)$ is  strongly $\vec{\C}$-universal, then  $(M,X)$ is  strongly $\vec{\C}$-universal.
\end{fact}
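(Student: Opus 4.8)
The plan is to verify, directly from the definition, that $(M,X)$ is strongly $\vec{\C}$-universal, transporting each instance of the problem into $Y$, solving it there via the assumed strong $\vec{\C}$-universality of $(Y,X)$, and transporting the solution back to $M$. So I would fix a pair $(K,C)\in\vec{\C}$, a closed set $B\subset K$, a map $f:K\to M$ that is a $Z$-embedding on $B$ with $(f\upharpoonright B)^{-1}(X)=B\cap C$, and an open cover $\mathcal U$ of $M$; the task is to produce a $Z$-embedding $g:K\to M$ that is $\mathcal U$-close to $f$, satisfies $g\upharpoonright B=f\upharpoonright B$, and has $g^{-1}(X)=C$.

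Two consequences of homotopy density drive the transport. First, since $Y$ is homotopy dense in $M$ there is a deformation $H\colon M\times[0,1]\to M$ with $H_0=\id_M$ and $H(M\times(0,1])\subset Y$; choosing a function $\lambda\colon K\to[0,1]$ with $\lambda^{-1}(0)=B$ and $\lambda$ sufficiently small, the map $x\mapsto H(f(x),\lambda(x))$ is $\mathcal U$-close to $f$, agrees with $f$ on $B$, and sends $K\setminus B$ into $Y$. Thus $f$ can be pushed into $Y$ while being held fixed on $B$ and kept close. Second, a \emph{compact} $Z$-set $A$ of $Y$ is automatically a $Z$-set of $M$: given a cover of $M$, one first approximates $\id_M$ by a near-identity map $M\to Y$ (again using $H$) and then postcomposes with a near-identity map $Y\to Y\setminus A$ witnessing that $A$ is a $Z$-set in $Y$, obtaining a near-identity map $M\to M\setminus A$. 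Because $K$ is compact, this lets every $Z$-embedding into $Y$ be read as a $Z$-embedding into $M$. Finally, since $X\subset Y$, the preimage condition $g^{-1}(X)=C$ is insensitive to whether $g$ is viewed as a map into $Y$ or into $M$.

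With these tools in hand I would first push $f$ off $B$ to a map $f_1$ as above, and then invoke the strong $\vec{\C}$-universality of $(Y,X)$ to approximate $f_1$ by a $Z$-embedding into $Y$ that realizes the exact preimage $(\,\cdot\,)^{-1}(X)=C$ while leaving the prescribed boundary data untouched. By the first observation the approximation stays $\mathcal U$-close, by the second it is a $Z$-embedding in $M$, and the preimage condition carries over verbatim. Gluing this interior solution to the unchanged data on $B$ then yields the required $g\colon K\to M$.

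The hard part will be the boundary reconciliation. The conclusion demands $g\upharpoonright B=f\upharpoonright B$ \emph{exactly}, yet $f(B)$ need not lie in $Y$, so $f$ cannot literally serve as boundary data for an application of strong $\vec{\C}$-universality of $(Y,X)$, whose maps take values in $Y$. Resolving this forces one to work relative to $B$, pushing only off $B$ and then gluing the fixed boundary piece to the $(Y,X)$-universal interior piece so that the result remains a single $Z$-embedding in $M$ with the correct preimage. The technical heart is therefore a $Z$-set unknotting / homotopy-extension argument that corrects, near $B$, the perturbation introduced by the push back to the original $f\upharpoonright B$, while simultaneously preserving closeness, injectivity, the $Z$-embedding property and the identity $g^{-1}(X)=C$; it is exactly here that the full apparatus of~\cite{BGM} is needed and that the various approximations must be made compatible.
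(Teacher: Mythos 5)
The paper does not prove this statement at all: it is quoted verbatim from \cite[Corollary 6.2]{BGM}, so there is no internal argument to compare yours against. Judged on its own terms, your proposal correctly sets up the reduction and your two transport observations are sound: a deformation $H$ with $H(M\times(0,1])\subset Y$ does let you push $f$ into $Y$ relative to $B$ while staying $\mathcal U$-close, and a compact $Z$-set of a homotopy dense $Y$ is indeed a $Z$-set of $M$ (compactness giving closedness in $M$, and composing a near-identity $M\to Y$ with a near-identity $Y\to Y\setminus A$ giving the required instability). The preimage remark is also fine since $X\subset Y$.

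The genuine gap is the step you label ``boundary reconciliation,'' and it cannot be waved through: the strong $\vec{\C}$-universality of $(Y,X)$ only accepts input maps $K\to Y$ whose restriction to the chosen closed set is a $Z$-embedding \emph{into $Y$}, whereas $f(B)$ need not meet $Y$ at all. So the hypothesis on $(Y,X)$ can never be invoked with $B$ itself as the boundary set, and ``gluing the fixed boundary piece to the $(Y,X)$-universal interior piece'' is not a single application of the hypothesis but an inductive/limit construction: one must exhaust $K\setminus B$ by closed sets $K_1\subset K_2\subset\cdots$, apply $(Y,X)$-universality on each $K_n$ rel $K_{n-1}$ with control that tightens as one approaches $B$, verify that the limit map is injective --- in particular that the interior image avoids $f(B)$, which is not automatic since $f(B)$ may intersect $Y$ and even $X$, and which also threatens the identity $g^{-1}(X)=C$ on $K\setminus B$ --- and verify that the resulting compact image, a countable union of $Z$-sets together with $f(B)$, is again a $Z$-set in $M$. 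Your text explicitly defers all of this to ``the full apparatus of~\cite{BGM}''; but that deferred step \emph{is} the theorem, so as written the proposal is an accurate plan of attack rather than a proof.
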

\begin{fact}\cite[Lemma 7.1]{BGM}\label{fact2}
If $M$ is an ANR, $(M,X)$ is  strongly $\vec{\C}$-universal and $U$ is an nonempty open subset of $E$, then $(U,X\cap U)$ is  strongly $\vec{\C}$-universal.
\end{fact}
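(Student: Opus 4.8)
The plan is to unwind the definition of strong $\vec{\C}$-universality for the pair $(U,X\cap U)$ and reduce each instance of it to the corresponding instance for $(M,X)$, exploiting the fact that a compact image stays inside the open set $U$ once the approximation is fine enough. (I read ``open subset of $E$'' as ``open subset of $M$''.)

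First I would fix the data: a pair $(K,C)\in\vec{\C}$, a closed set $B\subseteq K$, a map $f\colon K\to U$ that is a $Z$-embedding on $B$ and satisfies $(f\upharpoonright B)^{-1}(X\cap U)=B\cap C$, together with an open cover $\mathcal W$ of $U$. Since $K$ is compact, $f(K)$ is a compact subset of the open set $U\subseteq M$, so I can choose $\varepsilon>0$ with $B(f(K),\varepsilon)\subseteq U$ and then pass to an open cover $\mathcal V$ of $M$ that refines $\mathcal W$ near $f(K)$ and whose members meeting $f(K)$ have diameter $<\varepsilon$ and lie in $U$. With this choice, any map $g\colon K\to M$ that is $\mathcal V$-close to $f$ automatically takes values in $U$ and is $\mathcal W$-close to $f$ as a map into $U$.

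The crux is transferring the $Z$-set condition across the inclusion $U\hookrightarrow M$. Here I would use the localization principle that, for an ANR $M$ and an open $U\subseteq M$, a \emph{compact} set $A\subseteq U$ is a $Z$-set in $U$ if and only if it is a $Z$-set in $M$; equivalently, since for an ANR the $Z$-sets are exactly the closed sets with homotopy dense complement, $U\setminus A$ is homotopy dense in $U$ iff $M\setminus A$ is homotopy dense in $M$ for such $A$. (The nontrivial implication is proved by patching a map realizing the $Z$-set property of $A$ in $U$ with the identity on $M$ across a collar $\overline{W_1}\subseteq W_2\subseteq U$ around $A$, using that $M$ is an ANR.) Applying this to the compact set $f(B)$, the map $f$, viewed as a map $K\to M$, is a $Z$-embedding on $B$ in $M$, and $(f\upharpoonright B)^{-1}(X)=(f\upharpoonright B)^{-1}(X\cap U)=B\cap C$ because $f(B)\subseteq U$. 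Strong $\vec{\C}$-universality of $(M,X)$ then produces a $Z$-embedding $g\colon K\to M$ that is $\mathcal V$-close to $f$, agrees with $f$ on $B$, and satisfies $g^{-1}(X)=C$.

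It remains to read off the conclusion inside $U$. By the choice of $\mathcal V$ we have $g(K)\subseteq U$, so $g$ is a map $K\to U$ that is $\mathcal W$-close to $f$ and coincides with $f$ on $B$; the compact set $g(K)\subseteq U$ is a $Z$-set in $M$, hence a $Z$-set in $U$ by the same localization, so $g\colon K\to U$ is a $Z$-embedding. Finally $g^{-1}(X\cap U)=g^{-1}(X)\cap g^{-1}(U)=C\cap K=C$ since $g(K)\subseteq U$. Thus $g$ witnesses strong $\vec{\C}$-universality of $(U,X\cap U)$. The only genuinely technical ingredient, and the step I expect to be the main obstacle, is the localization of the $Z$-set property for compact sets across the open inclusion $U\subseteq M$; everything else is bookkeeping with covers so that compactness keeps the approximating maps inside $U$.
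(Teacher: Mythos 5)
The paper does not prove this statement itself---it is quoted directly from \cite[Lemma 7.1]{BGM}---so there is no internal proof to compare against; your argument is correct and is essentially the standard one: refine the given cover of $U$ so that, by compactness of $K$, any sufficiently close approximation of $f$ stays inside $U$, and transfer the $Z$-set condition back and forth between $U$ and $M$ using that for a compact subset of an open subset of an ANR the $Z$-set property is local (equivalently, homotopy density of the complement restricts to and extends from open subsets). The only step needing real care---the two-way localization applied to the compact sets $f(B)$ and $g(K)$---is precisely the one you isolate, and your collar-patching sketch of it is the right argument.
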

\begin{fact}\cite[Proposition 7.2]{BGM}\label{fact3}
If  $M$ is an ANR, $\mathcal U$ is an open cover of $M$ and $(U,X\cap U)$ is  strongly $\vec{\C}$-universal for every $U\in \mathcal U$, then $(M,X)$ is  strongly $\vec{\C}$-universal.
\end{fact}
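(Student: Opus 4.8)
The plan is to establish this local-to-global principle by a finite inductive absorption over a suitable decomposition of the domain, using the local strong universality on each chart as the engine and the locality of the $Z$-set property in ANR's to globalize. Fix a pair $(K,C)\in\vec{\C}$, a closed set $B\subset K$, a map $f:K\to M$ that is a $Z$-embedding on $B$ with $(f\upharpoonright B)^{-1}(X)=B\cap C$, and an open cover $\mathcal V$ of $M$ measuring the required accuracy; passing to a refinement, I may assume $\mathcal V$ star-refines $\mathcal U$. By Remarks~\ref{rem} one may even take $K=\I^\w$, but the argument works for any compactum. Since $K$ is compact and $f$ is continuous, $\{f^{-1}(U):U\in\mathcal U\}$ is an open cover of $K$, so I choose a finite closed cover $K=K_1\cup\dots\cup K_m$ fine enough that each $f(K_i)$ lies in some member $U_i\in\mathcal U$. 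Writing $L_i=K_1\cup\dots\cup K_i$ (with $L_0=\emptyset$), each set $K_i\cap(B\cup L_{i-1})$ is closed in $K_i$.

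I would then build maps $f=f_0,f_1,\dots,f_m=:g$ so that, for every $i$: (a)~$f_i\upharpoonright B=f$; (b)~$f_i$ agrees with $f_{i-1}$ off $K_i$ and $f_i(K_i)\subset U_i$; (c)~$f_i\upharpoonright K_j$ is a $Z$-embedding into $U_j$ for all $j\le i$, whence $f_i(K_j)$ is a compact $Z$-set of $M$; and (d)~$f_i^{-1}(X)\cap L_i=C\cap L_i$. At step $i$ the already-processed part is $B'=K_i\cap(B\cup L_{i-1})$, closed in $K_i$. On $B'$ the map $f_{i-1}$ is a $Z$-embedding into $M$, since $f_{i-1}(B')$ is a finite union of the $Z$-sets $f(B)$ and $f_{i-1}(K_j)$, $j<i$; as $f_{i-1}(B')\subset U_i$, restriction to the open set $U_i$ keeps it a $Z$-embedding into $U_i$, and $(f_{i-1}\upharpoonright B')^{-1}(X\cap U_i)=B'\cap C$. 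Because $(K_i,C\cap K_i)\in\vec{\C}$ (note $C\cap K_i\in\C$, as $\C$ is closed under intersections with closed sets) and $(U_i,X\cap U_i)$ is strongly $\vec{\C}$-universal by hypothesis, I approximate $f_{i-1}\upharpoonright K_i:K_i\to U_i$ by a $Z$-embedding $e_i:K_i\to U_i$ with $e_i\upharpoonright B'=f_{i-1}\upharpoonright B'$ and $e_i^{-1}(X\cap U_i)=C\cap K_i$, and put $f_i=f_{i-1}$ off $K_i$ and $f_i=e_i$ on $K_i$. Since $e_i(K_i)$ is a compact $Z$-set of the open set $U_i$ and is closed in $M$, the locality of $Z$-sets in ANR's makes it a $Z$-set of $M$, so (c) and the remaining conditions persist.

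After $m$ steps $L_m=K$, hence $g=f_m$ satisfies $g\upharpoonright B=f$, $g^{-1}(X)=C$, and $g(K)=\bigcup_{j=1}^m g(K_j)$ is a finite union of $Z$-sets of $M$, so is itself a $Z$-set. It remains to secure accuracy and global injectivity. Accuracy follows by making the approximation at step $i$ fine enough that the cumulative displacement stays inside the star-refinement $\mathcal V$. Global injectivity is arranged within the inductive step: before finalizing $e_i$, I push $e_i(K_i\setminus B')$ off the $Z$-set $f_{i-1}(L_{i-1})\cap U_i$ of $U_i$ by an arbitrarily small homotopy rel $B'$, which exists precisely because that set is a $Z$-set of $U_i$; using that both $X$ and $M\setminus X$ are homotopy dense in $M$ (Fact~\ref{fact}), this push-off can be taken to respect the partition of $K_i$ into $C\cap K_i$ and its complement, so the preimage condition survives. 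Thus $g$ is a one-to-one map of a compactum, i.e.\ an embedding with $Z$-set image, as required.

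The hard part will be exactly this simultaneous bookkeeping in the inductive step: the local modification must stay (i)~compatible with the frozen, already-processed region on the overlaps, (ii)~injective against all previously placed pieces, (iii)~a $Z$-embedding whose $Z$-set property globalizes from the chart $U_i$ to $M$, and (iv)~correct on the preimage of $X$ --- all while remaining within the prescribed accuracy. The inductive ``freeze what is already done'' device is what converts overlap-compatibility into the single hypothesis of strong universality with a prescribed closed set $B'$, and the locality of the $Z$-set property in ANR's together with the two-sided homotopy density of Fact~\ref{fact} is what promotes the purely local $Z$-embedding and preimage data to the global pair $(M,X)$. Checking that the small push-offs can be carried out respecting $X$ without destroying the $Z$-embedding is the technical heart of the argument.
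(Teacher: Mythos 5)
The paper does not actually prove this statement: it is quoted from Baars--Gladdines--van Mill \cite[Proposition 7.2]{BGM}, so the only comparison available is with the standard proof of that cited result. Your overall architecture --- a finite closed cover of $K$ subordinate to $f^{-1}(\mathcal U)$, inductive absorption chart by chart with the already-processed part frozen as the closed set $B'$, and localization of the $Z$-set property in ANR's to promote $Z$-embeddings from $U_i$ to $M$ --- is exactly the standard scheme, and the bookkeeping in (a)--(d) is set up correctly.

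The gap is in the two items you yourself defer to the end, and as written they do not go through. First, the push-off is impossible as stated: you propose to move $e_i(K_i\setminus B')$ off the $Z$-set $f_{i-1}(L_{i-1})\cap U_i$ \emph{rel} $B'$, but $e_i(B')=f_{i-1}(B')$ lies inside that very $Z$-set, so images of points of $K_i\setminus B'$ near $B'$ cannot be separated from it; what you actually need is disjointness from $f_{i-1}\bigl((B\cup L_{i-1})\setminus K_i\bigr)$ only, and that requires a genuine relative general-position argument (or a strengthened form of strong universality in which the approximating $Z$-embedding is required in advance to meet a prescribed $Z$-set only in $f(B')$). Second, and more seriously, the claim that this perturbation ``can be taken to respect the partition of $K_i$ into $C\cap K_i$ and its complement'' because $X$ and $M\setminus X$ are both homotopy dense is not a proof: an arbitrarily small homotopy generically destroys the exact equality $e_i^{-1}(X)=C\cap K_i$ in both directions, and preserving a prescribed preimage under perturbation is precisely the content of strong universality rather than a consequence of homotopy density. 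Until the injectivity of $f_i$ against the previously placed pieces is secured by a mechanism compatible with the preimage condition, you cannot conclude that $g=f_m$ is an embedding, so the key conclusion that $(M,X)$ is strongly $\vec{\C}$-universal is not yet established.
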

\begin{fact}\cite[Theorem 9.5]{BGM}\label{fact4}
 If  $M$ is an ANR, $(M,X)$ is  strongly $\vec{\C}$-universal and $A$ is a $Z$-set in $M$, then $(M,X\cup B)$ is strongly $\vec{\C}$-universal for every subset $B\subset A$.
\end{fact}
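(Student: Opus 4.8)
The plan is to verify the defining property of strong $\vec{\C}$-universality for the pair $(M,X\cup B)$ directly, reducing everything to the strong $\vec{\C}$-universality of $(M,X)$ plus the fact that the $Z$-set $A$ is negligible. To keep notation clear I rename the closed subset of the source appearing in the definition as $L$ (so as not to clash with the set $B\subseteq A$). Thus I fix a pair $(K,C)\in\vec{\C}$, a closed set $L\subseteq K$, a map $f\colon K\to M$ that is a $Z$-embedding on $L$ with $(f\upharpoonright L)^{-1}(X\cup B)=L\cap C$, and an open cover $\mathcal U$; the goal is a $Z$-embedding $g$ that is $\mathcal U$-close to $f$, agrees with $f$ on $L$, and satisfies $g^{-1}(X\cup B)=C$. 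The first thing I would record is that on $L$ there is nothing to arrange: since $g\upharpoonright L=f\upharpoonright L$, the hypothesis already gives $g^{-1}(X\cup B)\cap L=L\cap C$. Hence all the work happens over $K\setminus L$, and the desired identity will follow from the two inclusions $g(C\setminus L)\subseteq X$ and $g\big((K\setminus C)\setminus L\big)\cap(X\cup B)=\emptyset$.

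The mechanism that disposes of $B$ is that $B\subseteq A$ with $A$ a $Z$-set. By the $Z$-set criterion recalled after Fact~\ref{f6}, $M\setminus A$ is homotopy dense in $M$, and on the open ANR $M\setminus A$ the sets $X$ and $X\cup B$ coincide (because $B\cap(M\setminus A)=\emptyset$); by Fact~\ref{fact2} the pair $(M\setminus A,\,X\setminus A)=(M\setminus A,\,(X\cup B)\setminus A)$ is strongly $\vec{\C}$-universal. The strategy is therefore, first, to push $f$ off $A$ over $K\setminus L$ while fixing it on $L$: using the homotopy $h\colon M\times\I\to M$ with $h_0=\id$ and $h_t(M)\subseteq M\setminus A$ for $t>0$ provided by homotopy density, and a function $\lambda\colon K\to\I$ with $\lambda^{-1}(0)=L$, the map $f_1(x)=h_{\lambda(x)}(f(x))$ is $\mathcal U$-close to $f$, equals $f$ on $L$, and sends $K\setminus L$ into $M\setminus A$. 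The crucial uniform consequence is that any $Z$-embedding $g$ obtained from $f_1$ with $g\upharpoonright L=f\upharpoonright L$ and $g(K\setminus L)\cap A=\emptyset$ automatically satisfies $g^{-1}(A)\subseteq L$, hence $g^{-1}(B)\subseteq (f\upharpoonright L)^{-1}(B)\subseteq L\cap C\subseteq C$ \emph{for every} $B\subseteq A$. So it remains only to adjust $f_1$ over $K\setminus L$, keeping it off $A$ and fixed on $L$, into a global $Z$-embedding $g$ with $g^{-1}(X)\cap(K\setminus L)=C\setminus L$; this is the genuine strong-universality content of $(M,X)$, which over $K\setminus L$ takes place inside $M\setminus A$ (where $(M\setminus A,X\setminus A)$ is strongly universal by Fact~\ref{fact2}, and which can be globalized by Fact~\ref{fact3}) performed relative to the frontier values prescribed by $f_1$. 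Granting such a $g$, one checks $g^{-1}(X)=C\setminus R$ and $g^{-1}(B)=(f\upharpoonright L)^{-1}(B)\supseteq R$, where $R=(f\upharpoonright L)^{-1}(B\setminus X)$, so that $g^{-1}(X\cup B)=(C\setminus R)\cup(f\upharpoonright L)^{-1}(B)=C$.

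The main obstacle I anticipate is precisely the reconciliation of the fixed values $f\upharpoonright L$ with the invocation of $(M,X)$-universality: on $L$ the map $f$ may genuinely meet $B\subseteq A$ on the set $R\subseteq L\cap C$, so the naive input hypothesis $(f\upharpoonright L)^{-1}(X)=L\cap C$ required to quote strong universality of $(M,X)$ \emph{fails} on $R$, and $R$ need not be closed, so one cannot simply pass to a smaller target set. The way I would get around this is the observation made above that I never need the exact equality $g^{-1}(X)=C$; the deficit $R$ is exactly compensated by $g^{-1}(B)$, so only the one-sided control $g^{-1}(X)\subseteq C$ together with $g(C\setminus L)\subseteq X$ is required, and both are produced by a \emph{relative} strong-universality extension over $K\setminus L$ that leaves $f\upharpoonright L$ untouched. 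The delicate point is thus to carry out the off-$A$ push and the $Z$-embedding adjustment as a single controlled approximation compatible across the frontier of $L$ — a standard but careful application of the general-position machinery underlying Facts~\ref{fact2} and~\ref{fact3} — after which the bookkeeping of preimages yields $g^{-1}(X\cup B)=C$ as above.
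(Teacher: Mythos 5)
The paper does not prove this statement: it is quoted verbatim from \cite[Theorem 9.5]{BGM}, so there is no internal argument to compare yours against. Your reduction and bookkeeping are correct as far as they go: setting $R=(f\upharpoonright L)^{-1}(B\setminus X)$ and $C'=C\setminus R$, one indeed has $L\cap C'=(f\upharpoonright L)^{-1}(X)$, so $f$ becomes a legitimate input for the strong $\vec{\C}$-universality of $(M,X)$ with respect to the pair $(K,C')$, and \emph{if} the resulting $Z$-embedding $g$ also satisfies $g(K\setminus L)\cap A=\emptyset$, then $g^{-1}(B)=(f\upharpoonright L)^{-1}(B)$ and $g^{-1}(X\cup B)=(C\setminus R)\cup(f\upharpoonright L)^{-1}(B)=C$, exactly as you compute. (A secondary point you should still verify is that $(K,C')\in\vec{\C}$, i.e.\ that $(C\setminus L)\cup(f\upharpoonright L)^{-1}(X)$ belongs to $\C$; this uses closure properties of the Borel/projective class $\C$ together with information about the complexity of $X$ on compacta that is not literally among the hypotheses of the Fact.)

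The genuine gap is the step you defer to ``general-position machinery'': producing a single $Z$-embedding $g$, rel $L$ and close to $f$, satisfying \emph{both} $g^{-1}(X)=C'$ \emph{and} $g(K\setminus L)\cap A=\emptyset$. Doing the push off $A$ first and then quoting strong universality of $(M,X)$ does not suffice, because the approximating $Z$-embedding supplied by the definition may re-enter $A$ over $K\setminus L$, and then $g^{-1}(B)\setminus L$ is completely uncontrolled --- $B$ is an \emph{arbitrary} subset of $A$, so no descriptive-set-theoretic argument can absorb it into $C$; doing the push afterwards destroys the equality $g^{-1}(X)=C'$. The Facts you invoke do not close this loop: Fact~\ref{fact2} gives strong universality of $(M\setminus A, X\setminus A)$, but the prescribed values $f\upharpoonright L$ may lie in $A$, so the approximation cannot be carried out inside $M\setminus A$; and Fact~\ref{fact3} requires an open cover of all of $M$, which $\{M\setminus A\}$ is not --- precisely because of the frontier behaviour near $L$ that you flag. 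This simultaneous ``avoidance refinement'' of strong universality is the actual content of \cite[Theorem 9.5]{BGM}; it is obtained there by an inductive, geometrically convergent approximation over an exhaustion of $K\setminus L$ by compacta, not by formal manipulation of the universality axioms. Your write-up correctly isolates where the difficulty sits, but it asserts rather than supplies that construction.
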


\begin{fact}~\cite[Theorem 3.1]{BC}\label{f5}
Suppose $M$ is an ANR, a subset $X\subset M$ has SDAP, $X$ is homotopy dense in $M$ and the pair $(M,X)$ is strongly $\vec{\C}$-absorbing. Then $X$ is strongly $\C$-absorbing.
\end{fact}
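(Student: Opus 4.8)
The plan is to verify, one at a time, the three defining conditions of a $\C$-absorbing space for $X$: that $X$ is an ANR with SDAP, that $X=\bigcup_n X_n$ with each $X_n\in\C$ a $Z$-set in $X$, and that $X$ is strongly $\C$-universal. I read the hypothesis that $(M,X)$ is strongly $\vec{\C}$-absorbing as providing both the strong $\vec{\C}$-universality of the pair and a representation $X=\bigcup_n X_n$ with $X_n\in\C$ a (strong) $Z$-set in $M$. The first condition is then immediate: $X$ has SDAP by assumption, and being homotopy dense in the ANR $M$ it is itself an ANR (a homotopy dense subset of an ANR is an ANR, recalled in the bullets before Fact~\ref{f6}). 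Moreover, $M$ inherits SDAP from its homotopy dense subset $X$ (push a test map $\bigoplus_{n}\I^n\to M$ into $X$ by the homotopy density deformation and then apply the SDAP of $X$), so by Fact~\ref{f7} every $Z$-set of $M$ — in particular every compact one — is automatically a strong $Z$-set.

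For the second condition I start from the decomposition $X=\bigcup_n X_n$ with $X_n\in\C$ a strong $Z$-set in $M$ (promoting ordinary $Z$-sets to strong ones by the remark just made if the pair supplies only the former). Since $X$ is homotopy dense in $M$, Fact~\ref{f6} turns each $X_n=X_n\cap X$ into a strong $Z$-set in $X$, in particular a $Z$-set in $X$ lying in $\C$, which is exactly condition two.

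The core of the argument is the third condition, the passage from strong universality of the pair to strong universality of the space. By Fact~\ref{fact}, $M\setminus X$ is also homotopy dense in $M$. Fix $C\in\C$, a closed $B\subseteq C$, and a map $f:C\to X$ that is a $Z$-embedding on $B$; I must approximate $f$ by $Z$-embeddings $g:C\to X$ agreeing with $f$ on $B$. Embed $C$ as a subspace of $K=\I^\w$, so that $(K,C)\in\vec{\C}$ and, by Remark~\ref{rem}(1), the pair version of universality applies with the model cube $K$. Using that $M$ is an ANR together with the homotopy density of $M\setminus X$, extend $f$ to a map $\bar f:K\to M$ with $\bar f^{-1}(X)=C$, by sending the complement $K\setminus C$ into the homotopy dense set $M\setminus X$. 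Applying strong $\vec{\C}$-universality of $(M,X)$ to $\bar f$ yields a $Z$-embedding $\bar g:K\to M$, arbitrarily close to $\bar f$, with $\bar g\upharpoonright B=\bar f\upharpoonright B=f\upharpoonright B$ and $\bar g^{-1}(X)=C$. Restricting, $g:=\bar g\upharpoonright C$ maps $C$ into $X$; since $\bar g$ is injective with $\bar g^{-1}(X)=C$, we get $g(C)=\bar g(K)\cap X$, and as $\bar g(K)$ is a compact, hence strong, $Z$-set in $M$, Fact~\ref{f6} makes $g(C)$ a strong $Z$-set in $X$. Thus $g$ is a $Z$-embedding of $C$ into $X$ agreeing with $f$ on $B$ and close to $f$, which is precisely what strong $\C$-universality demands.

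I expect the extension step to be the main obstacle. Producing $\bar f:K\to M$ with the exact preimage condition $\bar f^{-1}(X)=C$ takes care, and when $M$ is merely an ANR (not an AR) a single global extension over $\I^\w$ need not exist; one then localizes, covering $M$ by open ARs, preserving strong universality of the pair on each chart by Fact~\ref{fact2} and reassembling by Fact~\ref{fact3}. The second delicate point is matching the boundary data across the two formulations — the fixed closed set $B$ (closed in $C$ for the space version but closed in the compact $K$ for the pair version) and the interchange of the two notions of $Z$-embedding (into $X$ versus into $M$). Both are handled by repeatedly invoking the homotopy density of $X$ and of $M\setminus X$ together with the SDAP of $X$, which give enough room to move between $Z$-sets of $X$ and strong $Z$-sets of $M$.
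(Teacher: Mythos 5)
The paper does not prove this statement at all---it is imported verbatim from \cite[Theorem 3.1]{BC}---so there is no internal argument to compare yours against; the content of that cited theorem is precisely the part of your proposal that remains open. Your first two verifications are essentially fine, modulo one reading issue: the definition of a $\vec{\C}$-absorbing pair supplies $X\in\C$ together with a $\sigma$-compact $\sigma Z$-set $\bigcup_n Z_n\supset X$ in $M$, not a ready-made decomposition $X=\bigcup_n X_n$ with $X_n\in\C$; you must set $X_n=X\cap Z_n$ and use that the Borel/projective classes under consideration are closed-hereditary to get $X_n\in\C$, after which Facts~\ref{f7} and~\ref{f6} do give that each $X_n$ is a $Z$-set in $X$ as you say.

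The genuine gap is in the third condition. Your plan hinges on extending $f:C\to X$ to $\bar f:K\to M$, but $C$ is in general a dense, non-closed subset of $K=\I^\w$ (the typical situation for $C\in\mathbf\Pi^0_3$), and the ANR extension property only applies to maps defined on \emph{closed} subsets; a continuous map on a dense $F_{\sigma\delta}$ subset of $\I^\w$ need not extend continuously to even one additional point of $K$. So $\bar f$ need not exist, with or without the preimage condition, and localizing over AR charts via Facts~\ref{fact2} and~\ref{fact3} does not help, because the obstruction is the non-closedness of $C$ in $K$, not the failure of $M$ to be an AR. (Incidentally, the pair definition only requires $(\bar f\upharpoonright B)^{-1}(X)=B\cap C$ of the input map, so demanding $\bar f^{-1}(X)=C$ is an unnecessary extra burden---but mere existence of an extension is already the problem.) The two further mismatches you flag are also real and unresolved: $B$ is closed in $C$ but generally not in $K$, so it is not admissible as the closed set of the pair definition and the behaviour of any extension on $\cl(B)\setminus B$ is uncontrolled; and $f\upharpoonright B$ is a $Z$-embedding into $X$, whereas the pair definition needs a $Z$-embedding into $M$, and a non-compact $Z$-set of $X$ need not have closure a $Z$-set of $M$. ``Repeatedly invoking homotopy density and SDAP'' does not fill these holes; the actual proof in \cite{BC} first uses SDAP to replace $f$ by approximations factoring through suitable towers of compacta before the pair universality can be applied, which is why the statement is quoted here as a Fact rather than proved. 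The one step of your core argument that is correct as written is the last: once a $Z$-embedding $\bar g:K\to M$ with $\bar g^{-1}(X)=C$ is in hand, $g(C)=\bar g(K)\cap X$ is a strong $Z$-set in $X$ by Facts~\ref{f7} and~\ref{f6}.
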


A pair $(M,X)$ is $\vec{\C}$-\emph{absorbing} (or $X$ is a $\C$-\emph{absorber in} $M$), if
\begin{itemize}
\item
$X\in \C$,
\item
$(M,X)$ is strongly $\vec{\C}$-universal,
\item
$X$ is contained in a $\sigma$-compact $\sigma Z$-set in $M$.
\end{itemize}

\

A fundamental theorem on absorbing pairs is the following.
\begin{theorem}~\cite[Corollary 10.8]{BGM}\label{t:fund}
If $M_i$ is an $\R^\w$- or $\I^\w$-manifold and pairs $(M_i,X_i)$ are $\vec{\C}$-\emph{absorbing}, $i=1,2$, then $X_1\cong X_2$ if and only if $X_1$ and $X_2$ are homotopically equivalent; in particular, if $X_1$ and $X_2$ are AR's, then $X_1\cong X_2$. If $M_i$ is an AR for $i=1,2$, then  $(M_1,X_1)\cong (M_2,X_2)$ under a homeomorphism $h$ of pairs (i.e., $h(M_1)=M_2$ and $h(X_1)=X_2$).
\end{theorem}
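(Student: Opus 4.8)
The plan is to deduce the theorem from the uniqueness theory for absorbing sets, first reducing the pair statement to an ambient-free statement about $X_i$ and then upgrading to a homeomorphism of pairs by a back-and-forth construction. For the reduction, note that since each $(M_i,X_i)$ is $\vec{\C}$-absorbing with $M_i$ a manifold, $X_i$ is homotopy dense in $M_i$ and has SDAP, so Fact~\ref{f5} shows that each $X_i$ is strongly $\C$-absorbing; together with $X_i\in\C$ and the $\sigma$-compact $\sigma Z$-set containing $X_i$, each $X_i$ is a $\C$-absorbing space. The implication ``$X_1\cong X_2\Rightarrow X_1$ and $X_2$ are homotopically equivalent'' is trivial. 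For the converse, when the $M_i$ are AR's each $X_i$ is homotopy dense in an AR, hence an AR, hence contractible, so Theorem~\ref{t:BM} yields $X_1\cong X_2$ directly; in general a homotopy equivalence between $\C$-absorbing spaces is promoted to a homeomorphism by the full Bestvina--Mogilski uniqueness theorem~\cite{BM}, of which Theorem~\ref{t:BM} is the AR case.

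The genuine content is the pair statement, where I would build a homeomorphism $h:M_1\to M_2$ with $h(X_1)=X_2$. Assuming the two manifolds have the same type (both $\cong\I^\w$ or both $\cong\R^\w$, as is forced for a pair homeomorphism to exist), I would begin with a homeomorphism $h_0:M_1\to M_2$ of the ambient manifolds obtained from their classification, and then correct it in countably many steps. Writing $X_1\subset\bigcup_n Z^1_n$ and $X_2\subset\bigcup_n Z^2_n$ with increasing compact $Z$-sets (available since each $X_i$ lies in a $\sigma$-compact $\sigma Z$-set, and each $Z$-set is a strong $Z$-set by Fact~\ref{f7}), I would produce homeomorphisms $h_n:M_1\to M_2$ that converge while alternately arranging $h_n(X_1\cap Z^1_k)\subset X_2$ at odd stages and $h_n^{-1}(X_2\cap Z^2_k)\subset X_1$ at even stages, for ever larger $k$, each time keeping $h_{n-1}$ fixed on a growing compactum and making the new correction small.

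Each correction is the one-step lemma carrying the argument: given the current homeomorphism and the compact piece $C$ of $X_1$ to be pushed in, strong $\vec{\C}$-universality of $(M_2,X_2)$ lets me approximate the relevant restriction (with the already-controlled part held fixed) by a $Z$-embedding $g$ with $g^{-1}(X_2)=C$, and the $Z$-set unknotting theorem in the manifold $M_2$ (\cite{BRZ}) realizes the change of $Z$-embeddings as an ambient homeomorphism of $M_2$ close to the identity, whose composition with the previous map gives $h_n$. I expect the main obstacle to be the bookkeeping that forces both $(h_n)$ and $(h_n^{-1})$ to converge to mutually inverse continuous maps: this is handled in the standard way by performing the $n$-th correction at a scale below the modulus of uniform continuity of the maps built so far on the fixed compacta, so that the limit $h=\lim_n h_n$ is a genuine homeomorphism of pairs with $h(X_1)=X_2$.
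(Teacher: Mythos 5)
This theorem is imported from \cite[Corollary 10.8]{BGM} and the paper gives no proof of its own, so there is nothing internal to compare against; your sketch is, in substance, the standard argument behind the cited result and is sound. The reduction to an ambient-free statement is correctly assembled from the paper's toolkit (Fact~\ref{fact} for homotopy density, Theorem~\ref{B1} or Fact~\ref{f8} for SDAP of $X_i$, Facts~\ref{f6}--\ref{f7} to turn the covering $Z$-sets of $M_i$ into $Z$-sets of $X_i$, Fact~\ref{f5} for strong $\C$-universality), and the pair statement is indeed proved by the back-and-forth you describe: apply strong $\vec{\C}$-universality of $(M_{3-i},X_{3-i})$ to the compact pairs $(Z^i_k,Z^i_k\cap X_i)\in\vec{\C}$, realize the resulting $Z$-embeddings ambiently by $Z$-set unknotting, and control convergence of both $h_n$ and $h_n^{-1}$ by shrinking the correction scales. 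You are also right to flag that the final clause only makes sense when $M_1$ and $M_2$ are ARs of the same type (both $\cong\I^\w$ or both $\cong\R^\w$); that hypothesis is implicit in the statement as quoted.
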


 Standard $\overrightarrow{\mathbf \Pi^0_3}$-absorbing pairs are $(\R^\w, c_0)$ and $(\I^\w,\hat{c_0})$, where
$\hat{c_0}=c_0\cap \I^\w$.
\cite{DMM}.
 More examples of $\overrightarrow{\mathbf \Pi^0_3}$-absorbing pairs can be found in~\cite{C2,CDGM,DMM,DR1, GM, GM1, KS, M}.

The Hurewicz set $\mathcal{H}(\I)$ is a $\mathbf \Pi^1_1$-absorber in $\K(\I)$~\cite[1.4.]{C1}.

\

\section{Strongly universal sets in Lawson semilattices}
A {\em topological semilattice} is a topological space $X$ endowed with a continuous commutative, associative operation $*:X\times X\to X$ such that $x*x=x$ for all $x\in X$.

For subsets $A$, $B$ of a semilattice $X$ denote $A*B:=\{a*b: a\in A, b\in B\}$. A subset $A$ of $X$ is a \emph{subsemillatice} if  $A*A\subset A$.
  A topological semilattice $X$ is called {\em Lawson} if it has a base of the topology consisting of subsemilattices.
A subsemilattice $A$ of  $X$ is a \emph{coideal} if $(X\setminus A)*X\subset X\setminus A$.

\begin{examples}\label{ex:L}

\begin{enumerate}
Natural examples of Lawson semilattices are
\item
Euclidean or Hilbert cubes with $(x_i)*(y_i):=(\max\{x_i,y_i\})$,
\item
 Vietoris hyperspaces $\K(X)$ with $A*B:=A\cup B$.
 \item
  Hyperspaces $\F(X)$, $\A_\omega(X)$, $\A_{\w+1}(X)$ and $\mathcal{H}(X)$  are coideals in $\K(X)$.
 \end{enumerate}
\end{examples}

\

A subset $X$ of a  space $M$ is called {\em locally path-connected in $M$} if for any point $x\in M$ and neighborhood $U_x\subseteq M$ of $x$ there exists a neighborhood $V_x\subseteq M$ of $x$ such that for any points $y,z\in V_x\cap X$ there exists a continuous map $\gamma:[0,1]\to U_x\cap X$ such that $\gamma(0)=y$ and $\gamma(1)=z$. Locally path-connected in $M$ subsets $X$ are also called  $LC^0$ in $M$.

A  space $X$ is {\em locally path-connected} ($LC^0$) if $X$ is $LC^0$ in $X$. If $X$ is locally path-connected in $M$, then $X$ is locally path-connected  but not conversely.

\

The following useful result was proved by W. Kubi\'s, K. Sakai and M. Yaguchi in~\cite{KSY}.

\begin{theorem}\label{t:KSY} If $X$ is a dense locally path-conected (and connected) subsemilattice in a Lawson semilattice $M$, then $M$ and $X$ are ANR's (AR's) and $X$ is homotopy dense in $M$.
\end{theorem}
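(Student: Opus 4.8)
The plan is to separate the assertion into two independent goals and then combine them: (i) that the ambient semilattice $M$ is an ANR, and an AR when it is connected; and (ii) that $X$ is homotopy dense in $M$. Granting both, the final clause that $X$ is an ANR (AR) is immediate from the fact quoted earlier that a homotopy dense subset of an ANR (AR) is itself an ANR (AR). Throughout, the operation $*$ is the engine of the construction: its continuity together with idempotency ($x*x=x$) converts the purely qualitative hypothesis that $X$ be locally path-connected in $M$ into the coherent, continuously parametrized homotopies needed for ANR structure, while the closedness $X*X\subset X$ is what forces the constructed images to remain inside $X$.

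The technical core I would isolate first is a filling lemma: \emph{inside any subsemilattice neighborhood $W$, every small map $S^{k-1}\to X\cap W$ bounds a small map $\I^k\to X\cap W$, with the filling depending continuously on the data.} For $k=1$ this is exactly the local path data supplied by the hypothesis (points of $X$ joined by paths in $X$); for $k\ge 2$ I would build the filling by \emph{coning with the operation}, sending a simplex whose vertices map to $x_0,\dots,x_k\in X\cap W$ into the finite subsemilattice they generate, taking iterated joins governed by barycentric coordinates. Because $x_0*\cdots*x_k$ and all its partial joins lie in $X$ (as $X*X\subset X$) and in $W$ (as $W$ is a subsemilattice), the whole filling stays in $X\cap W$; continuity follows from continuity of $*$. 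This lemma simultaneously upgrades ``locally path-connected in $M$'' to ``LC$^{n}$ in $M$ for every $n$'' and provides, near the diagonal of $M\times M$, an equiconnecting homotopy. Invoking the standard criterion that a metrizable locally equiconnected space is an ANR then gives that $M$ is an ANR; when $X$, hence $M=\cl(X)$, is connected, the local equiconnection is promoted to a global one (equivalently, $M$ is contracted using $*$ together with the path structure), so $M$ is an AR.

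For homotopy density I would use the approximation criterion: a dense subset of an ANR is homotopy dense once every map of a cube $\I^n\to M$ can be approximated arbitrarily closely by maps into the subset. Given such a map, triangulate $\I^n$ so finely that each simplex is carried into a single subsemilattice neighborhood $W$ (possible since $M$ is Lawson), push the images of the vertices into $X$ by density, and extend across the skeleta by the filling lemma of the previous paragraph, the operation keeping all values in $X\cap W$. The resulting nearby map lands in $X$, so $X$ is homotopy dense in $M$; combined with the ANR (AR) conclusion for $M$ this yields that $X$ is an ANR (AR) as well.

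The hard part is not any single construction but the \emph{coherence}: merely having connecting paths does not produce ANR structure, and a pointwise selection of paths or fillings is discontinuous and need not stay in the prescribed small neighborhood. The semilattice operation is precisely what repairs this, since $(y,z)\mapsto y*z$ is a continuous selection fixing the diagonal that can be used to blend individual paths and to cone over higher cells coherently. Carrying out this blending so that every intermediate value remains inside both the chosen subsemilattice neighborhood and (for the density statement) inside $X$, and then globalizing the local homotopies over all of $M$, is the step where essentially all the effort is concentrated.
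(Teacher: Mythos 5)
The paper does not prove this statement at all: it is quoted verbatim from Kubi\'s--Sakai--Yaguchi \cite{KSY}, so your attempt can only be measured against that external proof. Your overall architecture (show $M$ is an ANR, show $X$ is homotopy dense, then invoke the fact that a homotopy dense subset of an ANR (AR) is an ANR (AR)) is the right one, and the guiding idea --- use the continuity and idempotency of $*$ together with the subsemilattice base to turn local path data into coherent, dimension-independent fillings --- is genuinely the engine of the actual proof. But two of your load-bearing steps are broken as stated.

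First, the higher-dimensional filling. You propose to send a $k$-simplex with vertices at $x_0,\dots,x_k\in X\cap W$ ``into the finite subsemilattice they generate, taking iterated joins governed by barycentric coordinates.'' The subsemilattice generated by finitely many points is the finite set of all joins $x_{i_1}*\cdots*x_{i_m}$; a finite subset of a metric space is discrete, so the only continuous maps from a connected simplex into it are constant, and in particular no such map can restrict to the prescribed (nonconstant) edge paths on the boundary. The correct construction must blend the connecting \emph{paths} with the operation (e.g.\ joining reparametrized edge paths), and arranging that these blends agree on common faces, stay inside the given subsemilattice neighborhood, and remain small \emph{uniformly in $k$} is precisely the technical content of the Kubi\'s--Sakai--Yaguchi argument (and of the analogous Lemma~\ref{l:4} in this paper); your last paragraph names this difficulty but the recipe you give does not resolve it. Second, your passage from the filling lemma to ``$M$ is an ANR'' invokes ``the standard criterion that a metrizable locally equiconnected space is an ANR.'' No such theorem is available: whether every metrizable LEC space is an ANR is a long-standing open problem. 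The criterion you actually need is the Lefschetz--Dugundji full-realization characterization of ANRs (or an equivalent, such as Toru\'nczyk's characterization of locally homotopy negligible complements for the density half), and it is exactly here that the subsemilattice neighborhoods earn their keep, by supplying a single refinement that works for partial realizations of complexes of \emph{all} dimensions. Until the filling construction is repaired and the ANR criterion is replaced by a true one, the proof does not go through.
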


The next theorem is an important  special case of  a more general result recently proved by the first author~\cite[Theorem 9]{B2}.

\begin{theorem}\label{t:B2}
Let $M$ be a Lawson semilattice and $X$ be a dense  in $M$ coideal which is $LC^0$ in $M$. If our  class $\C$ is $\mathbf \Pi^0_2$-hereditary (i.e. for each $C\in\C$, any $G_\delta$-subset of $C$ belongs to $\C$), then the following conditions are equivalent:
\begin{enumerate}
\item
the pair $(M,X)$ is strongly  $\vec{\C}$-universal;
\item
$(M,X)$ is everywhere  $\vec{\C}$-preuniversal.

\noindent  If $M$ is a Polish space and $X$ has SDAP, then conditions (1) and (2) are equivalent to
\item
$X$ is strongly $\C$-universal.
\end{enumerate}
\end{theorem}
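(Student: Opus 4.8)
The plan is to establish the two routine implications directly and to locate the whole difficulty in $(2)\Rightarrow(1)$. Before anything else, observe that the hypotheses already put us in a favorable setting: since $X$ is a dense coideal (hence a subsemilattice) that is $LC^0$ in $M$, it is in particular locally path-connected, so Theorem~\ref{t:KSY} applies and tells us that $M$ and $X$ are ANR's and that $X$ is homotopy dense in $M$. Thus all of the ANR-based facts are available. The implication $(1)\Rightarrow(2)$ is then immediate: applying strong $\vec{\C}$-universality with $B=\emptyset$ produces, for every $(K,C)\in\vec{\C}$, a $Z$-embedding $g\colon K\to M$ with $g^{-1}(X)=C$, which is preuniversality; and since by Fact~\ref{fact2}, for every nonempty open $U\subseteq M$ the pair $(U,X\cap U)$ is again strongly $\vec{\C}$-universal, the same argument localized to a basic neighborhood yields the ``everywhere'' strengthening.

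For the equivalence of $(1)$ and $(3)$ under the extra assumptions that $M$ is Polish and $X$ has SDAP, I would appeal to the standard correspondence between universality of the pair and of the space: for a homotopy dense subset $X$ with SDAP of a Polish ANR $M$, the pair $(M,X)$ is strongly $\vec{\C}$-universal if and only if the space $X$ is strongly $\C$-universal, by the general theory of~\cite{BGM,BRZ} (the transition from the space to the pair is made along the homotopy density, in the spirit of Fact~\ref{fact} and Fact~\ref{fact2}). Given Theorem~\ref{t:KSY}, this adjoins $(3)$ to the chain with no further work.

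The heart of the matter is $(2)\Rightarrow(1)$. Fix $(K,C)\in\vec{\C}$, a closed $B\subseteq K$, and $f\colon K\to M$ that is a $Z$-embedding on $B$ with $(f\upharpoonright B)^{-1}(X)=B\cap C$; we seek a nearby $Z$-embedding $g$ with $g\upharpoonright B=f\upharpoonright B$ and $g^{-1}(X)=C$. The decisive mechanism is the \emph{coideal identity}: for any maps $g_1,\dots,g_k\colon K\to M$, the pointwise join $g=g_1*\cdots*g_k$ satisfies $g^{-1}(X)=\bigcap_{i} g_i^{-1}(X)$, because $X$ being a subsemilattice forces a join of $X$-values back into $X$, while $X$ being a coideal forces the join out of $X$ as soon as one factor leaves $X$. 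The guiding idea is therefore to produce, via preuniversality, an auxiliary map $\phi$ with $\phi^{-1}(X)=C$ and to set $g=f*\phi$, so that $g^{-1}(X)=f^{-1}(X)\cap C$ and the problem collapses to arranging $f(C)\subseteq X$. The Lawson property, providing a base of subsemilattices, keeps each join inside a prescribed neighborhood, so that the resulting $g$ remains $\mathcal U$-close to $f$; and $LC^0$-ness in $M$ furnishes the short paths needed to interpolate the local models continuously and to preserve $g\upharpoonright B=f\upharpoonright B$.

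I expect the genuine obstacle to be that $C$ is a general $\C$-set, typically \emph{not} closed, so one cannot simply homotope $f$ into $X$ over $C$ in one stroke. This is precisely where \emph{everywhere} preuniversality (rather than mere preuniversality) is indispensable: one captures $C$ from inside $X$ piece by piece along a refining sequence of fine covers, joining the successive local captures by $*$, with the coideal identity keeping the running preimage equal to the intersection of the captured pieces and $\mathbf\Pi^0_2$-heredity of $\C$ ensuring that the pieces cut out at each stage still belong to $\C$. A final general-position adjustment, using the manifold structure of $M$ (equivalently SDAP, together with Remarks~\ref{rem} and the homotopy density supplied by Fact~\ref{fact}), upgrades the limit map to a $Z$-embedding without disturbing either $g\upharpoonright B$ or the preimage. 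Assembling these ingredients into a convergent inductive scheme is exactly the content of~\cite[Theorem 9]{B2}, so the cleanest completion is to verify that our data---$M$ a Lawson semilattice, $X$ a dense $LC^0$ coideal, and $\C$ a $\mathbf\Pi^0_2$-hereditary class containing all compacta---match the hypotheses of that theorem and to quote it.
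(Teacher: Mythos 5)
The paper offers no proof of this theorem at all: it is stated as ``an important special case of a more general result'' and is justified solely by the citation of \cite[Theorem 9]{B2}, which is exactly where your argument also ends up --- you verify that $M$ is a Lawson semilattice, $X$ a dense $LC^0$ coideal, and $\C$ a $\mathbf\Pi^0_2$-hereditary class, and then quote that theorem. Your surrounding discussion (the coideal identity $g^{-1}(X)=\bigcap_i g_i^{-1}(X)$ for joins, the role of everywhere preuniversality, and the reduction of $(1)\Leftrightarrow(3)$ to Fact~\ref{f5} and homotopy density) is a reasonable gloss on what that external result does, but the substance of your proof coincides with the paper's: both rest entirely on \cite[Theorem 9]{B2}.
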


\section{Two standard Borel absorbers in $\I^\omega$}

Consider the following  subsets of $\I^\w$:
$$
\begin{aligned}
&\Sigma_2=\{(x_i)_{i\in\w}\in \I^\w:\exists n\;\forall m\ge n\;\;(x_m=0)\};\\
&\Pi_3=\{(x_i)_{i\in\w}\in \I^\w:\forall n\;\exists k\;\forall m\ge k\;\;(x_{2^n(2m+1)}=0)\};\\
&\Sigma_4=\{(x_i)_{i\in\w}\in \I^\w:\exists n_0\;\forall n\ge n_0\;\exists m_0\;\forall m\ge m_0\;\;(x_{2^n(2m+1)}=0)\}.
\end{aligned}
$$

The sets $\Pi_3$ and $\Sigma_4$ belong to classes $\Pi_3$ and $\mathbf\Sigma^0_4 $, respectively, and are connected  analogs of $P_3$ and $S_4$ used in  the proof of Theorem~\ref{p1}.

As an application of Theorem~\ref{t:B2}, we are  going to show that the pairs  $(\I^\w,\Pi_3)$, $(\I^\w,\Sigma_4)$ are
absorbing for Borel classes            $\mathbf\Pi^0_3$ and $\mathbf\Sigma^0_4 $, respectively.

\begin{lemma}\label{l:1}
\begin{enumerate}
\item The pair $(\I^\w,\Sigma_2)$ is everywhere $\overrightarrow{\mathbf\Sigma^0_2}$-preuniversal;
\item The pair $(\I^\w,\Pi_3)$ is everywhere $\overrightarrow{\mathbf\Pi^0_3}$-preuniversal;
\item The pair $(\I^\w,\Sigma_4)$ is everywhere $\overrightarrow{\mathbf\Sigma^0_4 }$-preuniversal.
\end{enumerate}
\end{lemma}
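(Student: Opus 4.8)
The plan is to produce, in all three cases, the required reduction map out of real‑valued ``distance coordinates'' $y\mapsto\min\{1,\dist(y,Z)\}$, which are continuous on an arbitrary compact domain $K$. This is exactly why one must work with the connected sets $\Sigma_2,\Pi_3,\Sigma_4\subset\I^\w$ rather than with $P_3,S_4\subset\{0,1\}^\w$: a continuous map of a (possibly connected) compactum into the totally disconnected $\{0,1\}^\w$ would be useless, whereas interval‑valued coordinates vanishing exactly on a prescribed closed set give us full freedom. First I would dispose of the word ``everywhere''. A basic open box $U\subset\I^\w$ constrains only finitely many coordinates; since membership in each of $\Sigma_2,\Pi_3,\Sigma_4$ is unaffected by altering finitely many coordinates (they express ``eventually zero'', ``every column eventually zero'' and ``all but finitely many columns eventually zero''), I would build the reduction using only the cofinitely many free coordinates and set the constrained ones to constants inside the prescribed intervals. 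Each column $\{2^n(2m+1):m\in\w\}$ loses only finitely many indices while infinitely many columns survive intact, so the combinatorial structure used below is preserved and the resulting map sends $K$ into $U$; thus the everywhere versions follow from the plain ones.

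For (1), given $(K,C)$ with $C\in\mathbf\Sigma^0_2$, I write $C=\bigcup_m F_m$ with $F_m$ compact and increasing and set $f(y)=(\min\{1,\dist(y,F_m)\})_{m\in\w}$; then $f(y)\in\Sigma_2$ iff $y\in F_m$ for all large $m$, i.e.\ iff $y\in C$. For (2), given $C\in\mathbf\Pi^0_3$, I write $C=\bigcap_nC_n$ with $C_n\in\mathbf\Sigma^0_2$, apply (1) to obtain maps $f_n$ with $f_n^{-1}(\Sigma_2)=C_n$, and assemble them as the consecutive columns of a single map $f\colon K\to(\I^\w)^\w\cong\I^\w$ (continuity being automatic in the product). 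Since $x\in\Pi_3$ iff every column of $x$ is eventually zero, $f^{-1}(\Pi_3)=\bigcap_nf_n^{-1}(\Sigma_2)=C$.

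For (3) the same device, with closed sets $Z_{n,m}\subset K$ and $f(y)_{2^n(2m+1)}=\min\{1,\dist(y,Z_{n,m})\}$, gives
$$f^{-1}(\Sigma_4)=\{y:\forall^\infty n\ (y\in Q_n)\}=\liminf_n Q_n,\qquad Q_n:=\liminf_m Z_{n,m}\in\mathbf\Sigma^0_2,$$
and conversely every $F_\sigma$ set $Q_n$ is realized by choosing $Z_{n,m}$ to be an increasing closed exhaustion of it. Hence (3) is \emph{equivalent} to the purely descriptive statement that every $C\in\mathbf\Sigma^0_4$ can be written as $\liminf_n Q_n$ with $Q_n\in\mathbf\Sigma^0_2$, which is precisely the combinatorial content of the $\mathbf\Sigma^0_4$‑completeness of $S_4$ (\cite[Exercise (23.6)]{Ke}).

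I expect this representation to be the main obstacle. The naive attempt — writing $C=\bigcup_iC_i$ with $C_i\in\mathbf\Pi^0_3$ increasing, realizing $y\in C_i$ in the $i$‑th block of columns via (2), and then enumerating all columns — fails: a single ``bad row'' $i$ with $y\notin C_i$ contributes infinitely many columns that are not eventually zero, so $\liminf$ over a flat enumeration of columns computes $\{y:\forall i\ (y\in C_i)\}$ rather than $\{y:\forall^\infty i\ (y\in C_i)\}=C$. The genuine point is that the $\Sigma_4$ condition must forgive finitely many entire bad rows, not finitely many bad columns, and no bijective reindexing of columns achieves this (nor does testing each row only to a bounded depth, since then a point may thread just past every test). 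I would therefore construct the closed sets $Z_{n,m}$ following the scheduling of the completeness proof for $S_4$: the rows $C_i$ are probed at increasing depth through a nested/reset scheme that charges only one ``failing'' column to each row at which $y$ is first detected outside $C_i$, so that $y\in C$ leaves only finitely many columns not eventually zero while $y\notin C$ leaves infinitely many. Carrying out this allocation with \emph{fixed} closed sets $Z_{n,m}$ (so that $f$ is a bona fide continuous map, not a $y$‑dependent procedure) is the technical heart; once the $Z_{n,m}$ are in place, continuity and the identity $f^{-1}(\Sigma_4)=C$ are routine, and the localization of the first paragraph upgrades preuniversality to everywhere‑preuniversality.
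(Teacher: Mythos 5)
Parts (1) and (2) of your argument, and your localization of the word ``everywhere'', are correct and coincide with the paper's proof: the paper likewise takes the diagonal product of functions vanishing exactly on an increasing closed exhaustion, assembles the maps for a decreasing sequence of $F_\sigma$-sets along the columns $\{2^n(2m+1):m\in\w\}$, and handles a basic open box by a map of $\I^\w$ into it that alters only finitely many coordinates, to which membership in $\Sigma_2$, $\Pi_3$, $\Sigma_4$ is insensitive.

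The issue is part (3). Your reduction of (3) to the statement that every $C\in\mathbf\Sigma^0_4(K)$ can be written as $\liminf_n Q_n$ with $Q_n\in\mathbf\Sigma^0_2$ is exactly right, and your diagnosis that the naive decomposition $C=\bigcup_i C_i$ with $C_i\in\mathbf\Pi^0_3$ increasing does not by itself yield such a representation is also correct. But you then leave the representation unestablished. You first attribute it to the $\mathbf\Sigma^0_4$-completeness of $S_4$ (\cite[Exercise (23.6)]{Ke}), which does not give it directly: completeness provides reductions from \emph{zero-dimensional} Polish spaces, and is itself normally derived from the $\liminf$ representation rather than the other way around. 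You then declare the construction of the sets $Z_{n,m}$ to be ``the technical heart'' and stop. That is the one genuine gap, and it is closed by a citation rather than by a scheduling construction: the fact that every $F_{\sigma\delta\sigma}$-set $C$ in a compact metrizable space can be written as $C=\bigcup_{m\in\w}\bigcap_{n=m}^\infty C_n$ with $C_n$ of type $F_\sigma$ is \cite[23.5(i)]{Ke}, and this is precisely how the paper's proof of (3) proceeds; once that result is invoked, your formula $f^{-1}(\Sigma_4)=\liminf_n Q_n$ finishes the argument exactly as in the paper.
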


\begin{proof} We will first prove that the pairs are preuniversal.
 For every $m\in\w$ let $\pr_m:\I^\w\to \I$, $\pr_m:x\mapsto x(m)$, be the coordinate projection.
\smallskip

1. Given a compact metrizable space $K$ and an $F_\sigma$-set $C\subset K$, write $C$ as the union $C=\bigcup_{n\in\w}C_n$ of an increasing sequence of closed sets $C_n$ in $K$. For every $n\in\w$ choose a continuous function $f_n:K\to\I$ such that $f_n^{-1}(0)=C_n$. Consider the diagonal product $f=(f_n)_{n\in\w}:K\to \I^\w$ and observe that $f^{-1}(\Sigma_2)=\bigcup_{n\in\w}C_n=C$.
\smallskip

2. Given a compact metrizable space $K$ and an $F_{\sigma\delta}$-set $C\subset K$, write $C$ as the intersection $C=\bigcap_{n\in\w}C_n$ of a decreasing sequence $(C_n)_{n\in\w}$ of $F_\sigma$-sets in $K$. By the preceding item, for every $n\in\w$ there exists a continuous map $f_n:K\to\I^\w$ such that $f_n^{-1}(\Sigma_2)=C_n$. Let $g_0:K\to\{0\}\subset\I$ be the constant function and, for every $k\in\N$, let $g_k=\pr_m\circ f_n$ where $n,m\in\w$ are unique numbers such that $k=2^n(2m+1)$.   Consider the diagonal product $g=(g_k)_{k\in\w}:K\to\I^\w$ and observe that $g^{-1}(\Pi_3)=\bigcap_{n\in\w}C_n=C$.
\smallskip

3. Let $C$ be any $F_{\sigma\delta\sigma}$-set in a compact metrizable space $K$.  By \cite[23.5(i)]{Ke}, there exists a sequence $(C_n)_{n\in\w}$ of $F_\sigma$-sets $C_n$ in $K$ such that $C=\bigcup_{m\in\w}\bigcap_{n=m}^\infty C_n$. By the first item, for every $n\in\w$ there exists a continuous function  $f_n:K\to \I^\w$ such that $f_n^{-1}(\Sigma_2)=C_n$.  Let $g_0:K\to\{0\}\subset\I$ be the constant function and, for every $k\in\N$, let $g_k=\pr_m\circ f_n$ where $n,m\in\w$ are unique numbers such that $k=2^n(2m+1)$.   Consider the diagonal product $g=(g_k)_{k\in\w}:K\to\I^\w$ and
observe that $g^{-1}(\Sigma_4)=\bigcup_{m\in\w}\bigcap_{n=m}^\infty C_n=C$.
\smallskip

In order to  see that the pairs are everywhere preuniversal, fix an open basic set $U=U_0\times\dots\times U_n\times \I\times\I\dots$ and apply an embedding $h:\I^\w\to U$ which is linear on each of the first $n+1$ coordinates and the identity on  the others. Observe that $h$ sends each of the sets    $\Sigma_2$, $\Pi_3$, $\Sigma_4$ into itself and use their preuniversality in $\I^\w$.

\end{proof}

Recall that $\I^\w$ is a Lawson semilattice (Examples~\ref{ex:L}).
\begin{lemma}\label{l:2}
The sets    $\Sigma_2$, $\Pi_3$, $\Sigma_4$ are dense coideals in $\I^\w$ and are $LC^0$ in $\I^\w$.
\end{lemma}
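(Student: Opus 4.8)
The plan is to treat the three assertions—density, the coideal property, and $LC^0$ in $\I^\w$—separately, reducing each to an elementary observation about the coordinatewise structure of $\Sigma_2$, $\Pi_3$, $\Sigma_4$. Throughout I would use that the quantifiers defining each set involve a threshold (the witness of an $\exists$), so that statements about two points can be combined by taking the larger threshold in each coordinate block.

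First I would dispose of density. Note that $\Sigma_2=\{x\in\I^\w:\exists n\ \forall m\ge n\ (x_m=0)\}$ is contained in both $\Pi_3$ and $\Sigma_4$: if $x_m=0$ for all $m\ge N$, then for $\Pi_3$ one chooses, given $n$, an index $k$ with $2^n(2k+1)\ge N$, and for $\Sigma_4$ one takes $n_0=0$ and argues in the same way. Since truncating an arbitrary $x\in\I^\w$ to its first $N$ coordinates and filling the rest with zeros produces a point of $\Sigma_2$ at distance $\le 2^{1-N}$ from $x$ (because $\sum_{m\ge N}|x_m|/2^m\le\sum_{m\ge N}2^{-m}=2^{1-N}$), the set $\Sigma_2$ is dense in $\I^\w$, and hence so are $\Pi_3$ and $\Sigma_4$.

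For the coideal property I would first verify that each set is a subsemilattice, i.e. closed under the coordinatewise maximum, and then that each is downward closed in the product order. Closure under $*$ follows by taking, for two given points, the larger of the two thresholds in each defining quantifier ($\max\{N_1,N_2\}$ for $\Sigma_2$, $\max\{k_1,k_2\}$ for $\Pi_3$, and the analogous maxima for $\Sigma_4$). Downward closure is immediate, since each defining condition only asks that certain coordinates vanish from some index on, and $0\le y_i\le x_i=0$ forces $y_i=0$. Downward closure of $A$ is precisely the statement that $\I^\w\setminus A$ is upward closed under $*$, i.e. $(\I^\w\setminus A)*\I^\w\subseteq\I^\w\setminus A$; together with the subsemilattice property this shows each of $\Sigma_2$, $\Pi_3$, $\Sigma_4$ is a coideal.

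The one point requiring the key idea is $LC^0$ in $\I^\w$, and here I would exploit convexity. Each of the three sets is convex: for $x,y\in A$ and $t\in[0,1]$, the point $tx+(1-t)y$ vanishes at every coordinate where both $x$ and $y$ vanish, and the same threshold-max argument shows that the coordinates required to be zero are simultaneously zero from some index on. Since the metric on $\I^\w$ is induced by the norm $\|u\|=\sum_{k\in\w}|u_k|/2^k$, the space $\I^\w$ has a neighborhood basis of convex sets. Thus, given $x\in\I^\w$ and a neighborhood $U_x$, I would pick a convex neighborhood $V_x\subseteq U_x$ of $x$; for any $y,z\in V_x\cap A$ the straight-line path $\gamma(t)=(1-t)y+tz$ lies in $V_x$ by convexity of $V_x$ and in $A$ by convexity of $A$, so $\gamma([0,1])\subseteq V_x\cap A\subseteq U_x\cap A$, which is exactly $LC^0$ in $\I^\w$. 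The hard part is really just recognizing that convexity of the three sets makes local path-connectedness automatic; everything else is bookkeeping with the quantifier thresholds.
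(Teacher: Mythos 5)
Your proof is correct and follows essentially the same route as the paper: the paper also dismisses density and the coideal property as evident and establishes $LC^0$ by running a straight-line segment between two points of $U\cap A$ inside a basic convex neighborhood, which is exactly your convexity argument. Your version merely makes explicit the threshold-maximum bookkeeping (for density, the semilattice/downward-closure properties, and convexity of the three sets) that the paper leaves to the reader.
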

\begin{proof}
The first two properties are evident. Let $A\in\{\Sigma_2, \Pi_3, \Sigma_4\}$.  To see that $A$ is $LC^0$ in $\I^\w$, consider an open basic set $U=U_0\times\dots\times U_n\times \I\times\I\dots$   in $\I^\w$, where  $U_i$ is connected open in $\I$ for each $i\le n$, and choose arbitrary distinct points $(a_i)_{i\in\w}, (b_i)_{i\in\w}\in U\cap A$.
Denote $\Gamma=\{i:a_i\neq b_i\}$. There is a segment $\gamma(t)=(x_i(t))_{i\in\Gamma}\subset \I^{\Gamma}$ from $(a_i)_{i\in\Gamma}$ to  $(b_i)_{i\in\Gamma}$, $t\in\I$. Put $\bar{\gamma}(t)= (\bar{x}_i(t))_{i\in\w}$, where
$$\bar{x}_i(t)=\left\{
                 \begin{array}{ll}
                   x_i(t), & \hbox{\text{if $i\in\Gamma$};} \\
                   a_i=b_i, & \hbox{otherwise.}
                 \end{array}
               \right.$$
Then $\bar{\gamma}(t)$ is a segment from $(a_i)_{i\in\w}$ to $(b_i)_{i\in\w}$ in $U\cap A$.
\end{proof}

\begin{lemma}\label{l:3}
The sets    $\Sigma_2$, $\Pi_3$, $\Sigma_4$ are contained in a $\sigma$-compact $\sigma Z$-set in $\I^\w$.
\end{lemma}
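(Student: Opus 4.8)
The plan is to exhibit a single $\sigma$-compact $\sigma Z$-set in $\I^\w$ that simultaneously contains all three sets $\Sigma_2$, $\Pi_3$ and $\Sigma_4$, namely the set of all sequences having at least one zero coordinate. For each $i\in\w$ put $Z_i=\{(x_k)_{k\in\w}\in\I^\w:x_i=0\}$ and let $Z=\bigcup_{i\in\w}Z_i$. First I would verify that $Z$ is a $\sigma$-compact $\sigma Z$-set. Each $Z_i$ is a face of the Hilbert cube, hence compact; to see that it is a $Z$-set I would invoke the characterization recalled earlier, namely that a closed set $B$ is a $Z$-set in an ANR $M$ if and only if $M\setminus B$ is homotopy dense in $M$ (\cite[Corollary 3.3]{Tor}). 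The deformation $H:\I^\w\times[0,1]\to\I^\w$ that replaces the $i$-th coordinate $x_i$ by $(1-t)x_i+t$ and leaves all other coordinates fixed satisfies $H(\cdot,0)=\id$ and $H(\I^\w\times(0,1])\subset\{x:x_i>0\}=\I^\w\setminus Z_i$, so $\I^\w\setminus Z_i$ is homotopy dense and $Z_i$ is a compact $Z$-set. Consequently $Z=\bigcup_{i\in\w}Z_i$ is a $\sigma$-compact $\sigma Z$-set.

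It then remains to establish the three inclusions $\Sigma_2,\Pi_3,\Sigma_4\subseteq Z$, which reduces to observing that every sequence in each of these sets has some coordinate equal to $0$. For $x\in\Sigma_2$ this is immediate, since $x$ is eventually $0$. For $x\in\Pi_3$, instantiating the defining condition at $n=0$ yields some $k$ with $x_{2m+1}=0$ for all $m\ge k$, so in particular $x_{2k+1}=0$. For $x\in\Sigma_4$, the witnessing exponent $n_0$ provides $m_0$ with $x_{2^{n_0}(2m+1)}=0$ for all $m\ge m_0$, so $x_{2^{n_0}(2m_0+1)}=0$. In each case $x\in Z_i$ for some $i$, whence $x\in Z$, completing the proof.

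Since the argument is elementary, there is no serious obstacle; the only point requiring a little care is the verification that each coordinate face $Z_i$ is genuinely a $Z$-set rather than merely compact, and the explicit push-off deformation above handles this uniformly via the homotopy-density criterion. I would emphasize that the single set $Z$ works for all three classes precisely because the defining quantifier strings of $\Sigma_2$, $\Pi_3$ and $\Sigma_4$ each force at least one vanishing coordinate; no finer information about the index blocks $\{2^n(2m+1):m\in\w\}$ is needed for this particular containment.
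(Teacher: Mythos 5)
Your proof is correct and takes essentially the same route as the paper: cover all three sets by a countable union of compact coordinate faces, each a $Z$-set (the paper cites homotopy density of the pseudo-interior $(0,1)^\w$ where you give the explicit push-off homotopy), and observe that each of the three defining quantifier strings forces at least one vanishing coordinate. Incidentally, your faces $Z_i=\{x\in\I^\w:x_i=0\}$ repair a small slip in the paper, whose sets $X_i=\{(x_n)\in\I^\w: x_n=0 \text{ for } n\le i\}$ as printed have union equal to $\{x:x_0=0\}$, which does not contain $\Sigma_4$.
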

\begin{proof}
This follows from the inclusions
$$ \Sigma_2\subset \Pi_3\subset \Sigma_4\subset \bigcup_{i\in\w} X_i,$$
where $X_i=\{(x_n)\in\I^\w: \text{$x_n=0$ for $n\le i$}\,\}$,
and from the fact that the pseudo-interior $(0,1)^\w$ is homotopy dense in the Hilbert cube $\I^\w$.
\end{proof}

Now,  Theorem~\ref{t:B2}, Lemmas~\ref{l:1},~\ref{l:2},~\ref{l:3} and Fact~\ref{f8} imply the following corollary.
\begin{corollary}\label{cor:1}
 \begin{enumerate}
\item
The pair $(\I^\w,\Pi_3)$ is $\overrightarrow{\mathbf\Pi^0_3}$-absorbing and $\Pi_3$ is $\mathbf\Pi^0_3$-absorbing.
\item
The pair $(\I^\w,\Sigma_4)$ is $\overrightarrow{\mathbf\Sigma^0_4}$-absorbing and $\Sigma_4$ is $\mathbf\Sigma^0_4$-absorbing.
\end{enumerate}
\end{corollary}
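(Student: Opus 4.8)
The plan is to verify, for each of the pairs $(\I^\w,\Pi_3)$ and $(\I^\w,\Sigma_4)$, the three defining conditions of an absorbing pair, and then to transfer absorption from the pair to the underlying space. I will carry this out for $\Pi_3$ with $\C=\mathbf\Pi^0_3$; the case of $\Sigma_4$ with $\C=\mathbf\Sigma^0_4$ is word-for-word the same, invoking part (3) of Lemma~\ref{l:1} instead of part (2).

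First I would check the hypotheses of Theorem~\ref{t:B2}. The cube $\I^\w$ is a Lawson semilattice (Examples~\ref{ex:L}) and a compact, hence locally compact, AR, while by Lemma~\ref{l:2} the set $\Pi_3$ is a dense coideal in $\I^\w$ that is $LC^0$ in $\I^\w$. The class $\mathbf\Pi^0_3$ contains all compacta, differs from $\mathbf\Pi^0_2$, and is $\mathbf\Pi^0_2$-hereditary, since a $G_\delta$-subset of an absolute $\mathbf\Pi^0_3$-space is the intersection of a $\mathbf\Pi^0_2$-set with a $\mathbf\Pi^0_3$-set and thus again absolute $\mathbf\Pi^0_3$. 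Therefore conditions (1) and (2) of Theorem~\ref{t:B2} are equivalent for $(\I^\w,\Pi_3)$. Lemma~\ref{l:1}(2) supplies condition (2), namely everywhere $\overrightarrow{\mathbf\Pi^0_3}$-preuniversality, so condition (1) holds: the pair $(\I^\w,\Pi_3)$ is strongly $\overrightarrow{\mathbf\Pi^0_3}$-universal.

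Next I would read off the definition of a $\overrightarrow{\mathbf\Pi^0_3}$-absorbing pair. Strong universality is now established; $\Pi_3$ lies in $\mathbf\Pi^0_3$; and by Lemma~\ref{l:3} it is contained in the $\sigma$-compact $\sigma Z$-set $\bigcup_i X_i$ of $\I^\w$. These are precisely the three requirements, so $(\I^\w,\Pi_3)$ is $\overrightarrow{\mathbf\Pi^0_3}$-absorbing, giving the first assertion of (1). To reach the second assertion I would pass to the space: by Fact~\ref{fact} strong universality of the pair already makes $\Pi_3$ a homotopy dense AR in $\I^\w$, whereupon Fact~\ref{f8} (using that $\I^\w$ is a locally compact ANR and that $\Pi_3\subset\bigcup_i X_i$ with the $X_i$ being $Z$-sets) shows $\Pi_3$ has SDAP. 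As $\I^\w$ is Polish and $\Pi_3$ has SDAP, Fact~\ref{f5} transfers absorption from the pair to the space, so $\Pi_3$ is $\mathbf\Pi^0_3$-absorbing; alternatively, condition (3) of Theorem~\ref{t:B2} gives strong $\mathbf\Pi^0_3$-universality of $\Pi_3$, and $\Pi_3=\bigcup_i(\Pi_3\cap X_i)$ displays it as a countable union of $Z$-sets in $\Pi_3$ (by Facts~\ref{f7} and~\ref{f6}), each closed in $\Pi_3$ and hence in $\mathbf\Pi^0_3$.

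The point needing the most care is the homotopy density of $\Pi_3$ in $\I^\w$, on which both the SDAP argument and the $Z$-set decomposition rest. I would obtain it from Fact~\ref{fact} once strong universality of the pair is known; it also follows directly from Theorem~\ref{t:KSY}, because $\Pi_3$ is a dense, $LC^0$ subsemilattice of the Lawson semilattice $\I^\w$ and is connected, being star-shaped at the origin (if $x\in\Pi_3$ and $t\in\I$ then $t\,x\in\Pi_3$, so $t\mapsto t\,x$ joins $x$ to the origin inside $\Pi_3$).
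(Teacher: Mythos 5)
Your proof is correct and follows essentially the same route as the paper, whose proof of this corollary is precisely the one-line combination of Theorem~\ref{t:B2} with Lemmas~\ref{l:1}--\ref{l:3} and Fact~\ref{f8}; you have merely spelled out the details (the $\mathbf\Pi^0_2$-hereditariness check, the passage from pair-absorption to space-absorption via Facts~\ref{fact}, \ref{f5}--\ref{f8}, and the connectedness of $\Pi_3$ needed for Theorem~\ref{t:KSY}), all of which are accurate.
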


\section{A(N)R properties of $\A_n(X)$ ($n\le\w+1$) and $\mathcal H(X)$}

The following lemma is a slight generalization of~\cite[Lemma 3.2]{CuN}.
\begin{lemma}\label{l:4}
If a subspace $X\subset M$ is  $LC^0$ in $M$, then $\F(X)$ is   $LC^0$ in $\K(M)$.
\end{lemma}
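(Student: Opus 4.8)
The plan is to verify the defining local path-connectedness property directly from the definition, lifting a witnessing neighborhood from $M$ to $\K(M)$ via the Vietoris topology. Fix $\mathcal K \in \K(M)$ and a basic Vietoris neighborhood $\mathcal U = \langle U_1,\dots,U_k\rangle$ of $\mathcal K$, where each $U_j$ is open in $M$ and $\mathcal K \subset \bigcup_j U_j$ with $\mathcal K \cap U_j \neq \emptyset$. I want to produce a smaller Vietoris neighborhood $\mathcal V$ of $\mathcal K$ so that any two finite sets in $\mathcal V \cap \F(X)$ can be joined by a path inside $\mathcal U \cap \F(X)$. First I would cover the compactum $\mathcal K$ by finitely many points $x_1,\dots,x_p \in \mathcal K$ and, using that $X$ is $LC^0$ in $M$, choose for each $x_i$ a neighborhood $V_{x_i}$ inside the appropriate $U_j$ so that any two points of $V_{x_i} \cap X$ are joined by a path in $U_j \cap X$. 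Shrinking and re-indexing, I would arrange the $V_{x_i}$ to refine $\{U_1,\dots,U_k\}$ and to cover $\mathcal K$, then set $\mathcal V = \langle V_{x_1},\dots,V_{x_p}\rangle$ intersected with the requirement that each original $U_j$ is still met.

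Next I would construct the actual path between two finite sets $A,B \in \mathcal V \cap \F(X)$. The natural idea is to move points of $A$ to points of $B$ inside the local path-connected cells. Concretely, since both $A$ and $B$ lie in $\mathcal V$, every point of $A$ and every point of $B$ sits in some $V_{x_i} \cap X$, and within that cell the pairs of points are joinable by paths $\gamma$ in $(U_{j(i)} \cap X)$. The finite-set path is then obtained by taking unions: if $A = \{a_1,\dots,a_r\}$ and I select for each $a_s$ a path to a chosen target point of $B$ in the same cell (and symmetrically cover all of $B$), the map $t \mapsto \{\gamma_1(t),\dots\} \cup (\text{fixed points of } B)$ is continuous into $\F(X)$ because the union operation $\K(M)\times\K(M)\to\K(M)$, $(C,D)\mapsto C\cup D$, is continuous and restricts to $\F(X)$. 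The key point to check is that throughout the deformation the finite set stays inside $\langle U_1,\dots,U_k\rangle$: each moving point stays in its cell $V_{x_i}\subset U_{j(i)}$, hence in $\bigcup_j U_j$, and the ``meets every $U_j$'' condition is preserved by keeping at least one endpoint anchored in each $U_j$.

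The technical subtlety I expect to be the main obstacle is bookkeeping the combinatorics of which points move to which targets so that the condition $\gamma(t) \cap U_j \neq \emptyset$ for all $j$ and all $t$ is never violated. A point of $A$ and its target in $B$ may lie in a common cell $V_{x_i}$ but in order to preserve the ``hitting'' condition for each $U_j$ I must ensure that for every index $j$ there is at least one point of the moving family that remains inside $U_j$ at all times; I would arrange this by, for each $j$, fixing a pair $(a,b)$ with $a,b\in U_j\cap X$ and joining them by a path that stays in $U_j\cap X$ (possible since $U_j$ itself can be taken as the ambient neighborhood in the $LC^0$ condition after refining $\mathcal V$). The rest of the points of $A$ can then be deformed freely, and their paths appended via union without harming the covering condition. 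Once these two conditions—staying inside $\bigcup_j U_j$ and meeting each $U_j$—are maintained along the concatenated path, continuity of $t\mapsto\bigcup_s\gamma_s(t)$ into $\K(M)$ with image in $\F(X)\cap\mathcal U$ gives the desired local path-connectedness of $\F(X)$ in $\K(M)$, completing the proof.
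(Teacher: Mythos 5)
Your proposal follows essentially the same route as the paper: refine the Vietoris neighborhood $\langle U_1,\dots,U_k\rangle$ via compactness of $K$ and the $LC^0$ cells $V_i$ (each contained in some $U_j$, with points of $V_i\cap X$ joinable inside $U_j\cap X$), then join two finite sets by a union of point-paths within the cells while preserving the covering conditions. The paper handles the bookkeeping you worry about with a single device---a surjection $s:A\to B$ (assuming $|A|\ge |B|$) and the path $t\mapsto\bigcup_{a\in A}\gamma_{a,s(a)}(t)$---which automatically starts at $A$, ends at $B$, and keeps every $U_j$ met; note that your phrase ``$\cup$ (fixed points of $B$)'' should be dropped, since adjoining points of $B$ for all $t$ would prevent the path from starting at $A$.
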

\begin{proof}
 Let $U$ be an open in $M$ neighborhood of a point $x\in M$. There is an open in $M$ neighborhood $V\subset U$ of $x$ such that any two points $a,b\in V\cap X$ can be joined by a path in $U\cap X$.
\begin{claim}\label{c:1}
For each finite sets $A,B\subset V\cap X$ there is a path $$\gamma:\I\to \langle U\rangle \cap\F(X)\quad\text{such that $\gamma(0)=A$ and $\gamma(1)=B$}.$$
\end{claim}
Indeed, suppose $|A|\ge |B|$, choose a surjection $s:A\to B$ and paths $\gamma_{a,s(a)}:\I\to \langle U\rangle \cap\F(X)$ such that   $\gamma_{a,s(a)}(0)=a$ and $\gamma_{a,s(a)}(1)=s(a)$. Then $\gamma(t):=\bigcup_{a\in A}\gamma_{a,s(a)}(t)$ is  the required path.

\

Now, let $\langle U_1,\dots,U_k\rangle$ be a basic open set in the Vietoris topology in $\mathcal K(M)$ and $K\in \langle U_1,\dots,U_k\rangle$.

By the assumption and  compactness of $K$, one can find open in $M$ sets $V_1,\ldots, V_m$ such that
$K\in \langle V_1,\dots,V_m\rangle \subset \langle U_1,\dots,U_k\rangle,$ each $V_i$ is contained in some  $U_j$ and
and any two points $a,b\in V_i\cap X$ can be joined by a path in  $U_j\cap X$ for each $j$ such that $V_i\subset U_j$. Let $A,B\in \langle V_1,\dots,V_m\rangle \cap\F(X)$.
Using Claim~\ref{c:1}, one constructs inductively a path from $A$ to $B$ in $\langle U_1,\dots,U_k\rangle\cap \F(X)$.
\end{proof}

Clearly, if $X$ is dense in $M$ then $ \F(X)$ is  dense in  $\K(M)$ and if $X$ is connected, then $ \F(X)$ is connected either. Lemma~\ref{l:4} and Theorem~\ref{t:KSY} applied to $ \F(X)\subset \K(M)$ yield the following lemma.
\begin{lemma}\label{l:5}
If $X\subset M$ is dense and  $LC^0$ in $M$ (and connected), then $ \F(X)$ and $\K(X)$  are homotopy dense in  $\K(M)$ and the hyperspaces  $ \F(X)$, $\K(X)$ and $\K(M)$ are ANR's (AR's).
\end{lemma}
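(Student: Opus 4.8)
The plan is to deduce Lemma~\ref{l:5} directly from Lemma~\ref{l:4} and Theorem~\ref{t:KSY}, with the preliminary density/connectedness remarks handled by hand. First I would verify the two elementary claims stated just before the lemma: if $X$ is dense in $M$, then $\F(X)$ is dense in $\K(M)$ (given any basic Vietoris neighborhood $\langle U_1,\dots,U_k\rangle$ of a $K\in\K(M)$, pick one point of $X$ in each $U_i$ by density; their union is a finite set in $X$ lying in the neighborhood); and if $X$ is connected then $\F(X)$ is connected, since the union map $X^n\to \F(X)$ sending $(x_1,\dots,x_n)\mapsto\{x_1,\dots,x_n\}$ is continuous with connected domain, and $\F(X)=\bigcup_n \mathrm{image}$ is an increasing union of connected sets all containing the singletons, hence connected.

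Next I would assemble the hypotheses needed to apply Theorem~\ref{t:KSY} to the inclusion $\F(X)\subset\K(M)$. By Examples~\ref{ex:L}, $\K(M)$ is a Lawson semilattice under $A*B:=A\cup B$, and $\F(X)$ is a subsemilattice of it (a union of two finite subsets of $X$ is again a finite subset of $X$). By Lemma~\ref{l:4}, since $X$ is $LC^0$ in $M$, the hyperspace $\F(X)$ is $LC^0$ in $\K(M)$, so in particular $\F(X)$ is locally path-connected; and by the density claim just established $\F(X)$ is dense in $\K(M)$. Thus $\F(X)$ is a dense, locally path-connected (and, when $X$ is connected, connected) subsemilattice of the Lawson semilattice $\K(M)$. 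Theorem~\ref{t:KSY} then immediately gives that both $\K(M)$ and $\F(X)$ are ANR's (AR's in the connected case) and that $\F(X)$ is homotopy dense in $\K(M)$.

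Finally I would transfer these conclusions from $\F(X)$ to $\K(X)$. The containment $\F(X)\subset\K(X)\subset\K(M)$ shows that $\K(X)$, sitting between a homotopy-dense subset and the ambient space, is itself homotopy dense in $\K(M)$: any deformation $H:\K(M)\times[0,1]\to\K(M)$ with $H(\K(M)\times(0,1])\subset\F(X)\subset\K(X)$ witnesses homotopy density of $\K(X)$ as well. Then, invoking the general principle recorded in the excerpt that a homotopy dense subset of an ANR (AR) is itself an ANR (AR), we conclude that $\K(X)$ is an ANR (AR). Since $\K(M)$ is already known to be an ANR (AR), all three hyperspaces have the asserted A(N)R property.

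The only genuinely nontrivial input is Lemma~\ref{l:4}, which supplies the $LC^0$ property of $\F(X)$ in $\K(M)$; everything else in this step is bookkeeping. Accordingly the main obstacle is already discharged by the previously established lemma, and the expected difficulty here is merely confirming the semilattice and density hypotheses so that Theorem~\ref{t:KSY} applies cleanly and that the homotopy-density deformation for $\F(X)$ transfers to $\K(X)$.
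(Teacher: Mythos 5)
Your proposal is correct and follows essentially the same route as the paper, which likewise establishes density and connectedness of $\F(X)$ in $\K(M)$, invokes Lemma~\ref{l:4} for the $LC^0$ property, and applies Theorem~\ref{t:KSY}; your explicit transfer of homotopy density and the A(N)R property from $\F(X)$ to $\K(X)$ via the sandwich $\F(X)\subset\K(X)\subset\K(M)$ fills in a step the paper leaves implicit.
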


\begin{theorem}\label{t:6}
If a  subspace $X$ of a dense-in-itself space $M$ is dense and  $LC^0$ in $M$ (and connected), then
\begin{enumerate}
\item
$\A_\w(X)$, $\A_{\w+1}(X)$ and $\mathcal H(X)$ are  ANR's (AR's) which are $LC^0$ in $\K(M)$ and homotopy dense in  $\K(M)$;
\item
   $\A_n(X)$ is an ANR (AR) for each $n\in\N$.
\end{enumerate}
\end{theorem}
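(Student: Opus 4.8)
The plan is to treat the three hyperspaces in (1) uniformly and to handle (2) separately, because the decisive structural difference is that $\A_\w(X)$, $\A_{\w+1}(X)$ and $\mathcal H(X)$ are subsemilattices of $\K(M)$ under union, whereas $\A_n(X)$ is not (joining two sets with $\le n$ accumulation points can yield up to $2n$ of them).

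For part (1) write $\mathcal A$ for any of $\A_\w(X)$, $\A_{\w+1}(X)$, $\mathcal H(X)$. First I would verify that $\mathcal A$ is a \emph{dense subsemilattice} of the Lawson semilattice $(\K(M),\cup)$ (Examples~\ref{ex:L}): closure under union follows from $(A\cup B)'=A'\cup B'$ (for $\mathcal H$ one also uses that a union of two countable compacta is countable), and density holds because $M$, hence $X$, is dense-in-itself, so each basic Vietoris set $\langle U_1,\dots,U_k\rangle$ contains a convergent sequence $\{p\}\cup\{x_j\}\subset U_1\cap X$ together with points $u_i\in U_i\cap X$, an element already lying in $\A_1(X)\subset\mathcal A$. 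The substantial step is to show $\mathcal A$ is $LC^0$ in $\K(M)$: I would refine $\langle U_1,\dots,U_k\rangle$ to $\langle V_1,\dots,V_m\rangle$ exactly as in the proof of Lemma~\ref{l:4}, and for nearby $A,B\in\mathcal A$ connect them along $A\rightsquigarrow A\cup B\rightsquigarrow B$. Using that $X$ is $LC^0$ in $M$, drag each point of $B$ to a matching point of $A$ in the same $V_i$ by a path $\gamma_b$ in $X$, forming the path $s\mapsto A\cup\overline{\{\gamma_b(s):b\in B\}}$ that runs from $A\cup B$ to $A$ inside $\langle U_1,\dots,U_k\rangle\cap\mathcal A$ (a fixed-time image of $B$ has at most $|B'|$ accumulation points, so finiteness is preserved for $\A_\w$). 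Once $LC^0$ in $\K(M)$ is established, $\mathcal A$ is in particular $LC^0$, and Theorem~\ref{t:KSY} (with its $M$ taken to be $\K(M)$) yields that $\mathcal A$ is an ANR and is homotopy dense in $\K(M)$; for the AR case I would note that $X$ connected and $LC^0$ is path-connected, whence $\mathcal A$ is connected, and apply the parenthetical form of Theorem~\ref{t:KSY}.

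For part (2), since $\A_n(X)$ is not a subsemilattice, Theorem~\ref{t:KSY} is unavailable, so I would instead prove that $\A_n(X)$ is \emph{homotopy dense in} $\K(M)$ and then invoke the standard fact that a homotopy dense subset of an ANR (AR) is an ANR (AR), together with Lemma~\ref{l:5} which gives that $\K(M)$ is an ANR (AR). Because homotopy density passes to larger subsets and $\A_1(X)\subseteq\A_n(X)\subseteq\K(M)$, it suffices to produce a deformation $H\colon\K(M)\times[0,1]\to\K(M)$ with $H_0=\id$ and $H_t(\K(M))\subseteq\A_1(X)$ for $t>0$. Pointwise this is feasible—any compactum $K$ is Hausdorff-approximated by a finite $\e$-net in $X$ together with one tiny convergent sequence supplying a single accumulation point—so the real task is to organize these approximations into a continuous deformation recovering the identity at $t=0$; alternatively one may show $\A_n(X)$ is homotopy dense in the ANR $\A_\w(X)$ of part (1) by continuously dissolving all but $n$ of the (necessarily small) accumulation clusters of a nearby element.

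The main obstacle in both parts is the same continuity issue. In part (1) the paths $\gamma_b$ must be chosen to depend continuously, and compactly, on $b\in B$, i.e. one needs a continuous \emph{family} rather than the merely pointwise paths furnished by $LC^0$ in $M$; I expect to obtain this by first reducing $B$ to a finite net (handled verbatim by Lemma~\ref{l:4}) and treating the vanishing tail by induction on Cantor--Bendixson rank. In part (2) the obstruction is that the number of accumulation points is not even upper semicontinuous—arbitrarily close to a set in $\A_1(X)$ there are sets with many accumulation points—so there is no canonical choice of which clusters to retain; the heart of the argument is a scale-controlled construction that continuously truncates the spurious small clusters while keeping at most $n$ genuine ones and recovering $K$ as $t\to 0$.
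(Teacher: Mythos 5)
Your overall architecture for part (1) matches the paper's (dense subsemilattice of the Lawson semilattice $\K(M)$, verify $LC^0$ in $\K(M)$, invoke Theorem~\ref{t:KSY}), but the step where you verify $LC^0$-ness has a real gap. You propose to join $A\cup B$ to $A$ by dragging every point $b\in B$ along a path $\gamma_b$ in $X$; this requires the family $(\gamma_b)_{b\in B}$ to depend continuously on $b$ over an arbitrary countable compactum, and mere $LC^0$-ness of $X$ in $M$ gives paths pointwise with no such equicontinuity. Your suggested repair (finite net plus induction on Cantor--Bendixson rank) is not an argument. The paper never moves an infinite set along paths: it keeps $A_1$ unioned in as a fixed ``anchor'' throughout, uses the deformation $H$ of $\K(M)$ through $\F(X)$ supplied by Lemma~\ref{l:5} to slide $A_i$ onto a finite approximation $H(A_i,t_0)$, and then connects the two \emph{finite} sets by a path in $\F(X)$ via Lemma~\ref{l:4}. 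All intermediate sets are of the form (finite set)$\,\cup A_1$, so membership in $\A_\w(X)$ is automatic and no continuous family of paths over $B$ is ever needed.

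Part (2) is where the approach genuinely fails. You propose to prove that $\A_n(X)$ (indeed $\A_1(X)$) is homotopy dense in $\K(M)$ and deduce the ANR property from that. This is strictly stronger than the theorem's claim, and the construction you would need --- a deformation $H_t$ of $\K(M)$ landing in $\A_1(X)$ for $t>0$ --- runs squarely into the continuous-selection obstruction: attaching one small convergent sequence to a finite approximation of $K$ requires continuously selecting a point (or a bounded number of points) near $K$, and such selections on $\K(X)$ exist essentially only when $X$ is an arc (see the paper's Remark~\ref{rem0} and the reference to~\cite{KNY}); attaching a sequence at \emph{every} point of the finite approximation lands you in $\A_\w(X)$, not $\A_n(X)$. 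You acknowledge this (``no canonical choice of which clusters to retain'') but offer no resolution, so the proof is incomplete at its central point. The paper takes a different and weaker route that avoids selections entirely: it fixes a basic Vietoris set $\langle U_0,\dots,U_k\rangle$ with path-connected $U_i$, chooses \emph{once and for all} a point $x_0\in U_0$, an arc $h(\I)\subset U_0$ and points $s_i\in U_i$, and writes down an explicit homotopy contracting $\A_n(X)\cap\langle U_0,\dots,U_k\rangle$ in itself to the single convergent sequence $S=C_1\cup\{s_1,\dots,s_k\}$ (first fattening $Y$ by finite approximations, then contracting the finite part, then growing the fixed sequence $C_r$ along the fixed arc). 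Having a basis, closed under finite intersections, of homotopically trivial open sets, it concludes that $\A_n(X)$ is an ANR by~\cite[Corollary 4.2.18]{M}. If you want to complete your write-up you should switch to this local-contractibility criterion rather than attempt homotopy density of $\A_n(X)$ in $\K(M)$.
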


\begin{proof}
First, let us notice that for each $n\in\N\cup\{\w\}$  the hyperspace $\A_n(X)$ is dense in $\K(M)$. This follows easily from the fact that $\F(X)$ is dense in $\K(M)$, $M$ has no isolated points and there are nontrivial paths in $X$ in small neighborhoods of  points of $M$.

\smallskip

(1) Let $\mathcal U\subset \K(M)$ be an open neighborhood of $K\in \K(M)$. By Lemma~\ref{l:4}, there is an open $\mathcal V\subset \mathcal U$ containing $K$ such that any two $F_1,F_2\in  \F(X)\cap\mathcal V$ can be joined by a path in $\F(X)\cap\mathcal U$.

Let $A_1,A_2\in  \A_\w(X)\cap\mathcal V$. By Lemma~\ref{l:5}, there is a homotopy $$H:\K(M)\times\I\to \K(M),\quad H(Y,0)=Y,\quad H(Y,t)\in\F(X)$$ for each $Y$ and $t>0$.
Choose sufficiently small $0<t_0<1/2$  such that  $H(A_i,[0,t_0])\subset \mathcal V$, $i=1,2.$ Put $\gamma_1(t)=A_1\cup H(A_1,t)$ for $0\le t\le t_0$ and let $\gamma:[t_0,1-t_0]\to \F(X)\cap\mathcal U$ be a path such that $\gamma(t_0)=H(A_1,t_0)$ and $\gamma(1-t_0)=H(A_2,t_0)$.
Then $$\overline{\gamma}=\left\{
                           \begin{array}{ll}
                             \gamma_1(t), & \hbox{\text{if $0\le t\le t_0$};} \\
                             \gamma(t)\cup A_1, & \hbox{\text{if $t_0\le t\le 1- t_0$};} \\
                             H(A_2,t)\cup A_1, & \hbox{\text{if $1-t_0\le t\le 1$}}
                           \end{array}
                         \right.$$

is a path  in $\A_\w(X)\cap\mathcal U$ from $A_1$ to $A_1\cup A_2$. Similarly, there is a path in $\A_\w(X)\cap\mathcal U$ from $A_2$ to $A_1\cup A_2$. It means that the hyperspace $\A_\w(X)$ is $LC^0$ in $\K(M)$. The proof for $\A_{\w+1}(X)$ and $\mathcal H(X)$ is the same.

Since  $\A_\w(X)$ is a dense subsemilattice of $\K(M)$, we conclude by Theorem~\ref{t:KSY} that $\A_\w(X)$ is a homotopy dense in  $\K(M)$ ANR, which implies that also $\A_{\w+1}(X)$ and $\mathcal H(X)$ are homotopy dense ANR's.  If $X$ is connected then $\K(M)$ is AR (Lemma~\ref{l:5}), hence  all the hyperspaces are  AR's, as  homotopy dense subsets.

\smallskip

(2) Theorem~\ref{t:KSY} is not applicable to  hyperspaces $\A_n(X)$, $n\in\N$, for they are not subsemilattices of  $\K(M)$. Therefore, we provide a more direct argument.

Consider basic open sets $\langle U_0,U_1,\dots,U_k\rangle$ in the Vietoris topology in $\mathcal K(X)$, $k\ge 0,$ where $U_i$'s are open path-connected subsets of $X$.
 The sets  $\mathcal A_n(X)\cap \langle U_0,U_1,\dots,U_k\rangle$ form an open base in  $\mathcal A_n(X)$ which is closed under finite intersections. We are going to show  that each of them  is contractible in itself.

Choose  points $x_0\in U_0$ and $s_i\in U_i$ for each $0<i\le k$. Let  $$h:\I\to h(\I)\subset U_0\quad\text{be a homeomorphism such that $h(0)=x_0$}.$$  For $r\in \I$, let
$$C_r=h\left(\{0\}\cup \left\{\frac{r}{j}: j\in\mathbb N\right\}\right)\quad\text{and}\quad S=C_1\cup\{s_1,\dots, s_k\}.$$

The set $\F(X)\cap \langle U_0,\ldots,U_k\rangle$ is dense in the Lawson semilattice $\langle U_0,\ldots,U_k\rangle$ and it is $LC^0$ in  $\langle U_0,\ldots,U_k\rangle$, by Lemma~\ref{l:4}, so it is an ANR  homotopy dense in $\langle U_0,\ldots,U_k\rangle$, by Theorem~\ref{t:6}.
Hence, there is a homotopy $$H:\langle U_0,\ldots,U_k\rangle\times \I\to \langle U_0,\ldots,U_k\rangle\quad\text{through finite sets},$$ i.e.
$$H(K,0)=K \quad\text{and}\quad H(K,t)\in \mathcal F(X) \quad\text{for}\quad t>0.$$

 The subspace $U=\bigcup_{i=0}^k U_i$ is locally path-connected and $$\mathcal E=\mathcal F(X) \cap \langle U_0,U_1,\dots,U_k\rangle$$ is an expansion hyperspace in $U$ in the sense of~\cite{CuN}. Moreover, each element of $\mathcal E$ intersects each component of $U$.
Therefore $\mathcal E$ is an AR~\cite[Lemma 3.6]{CuN}, so it is contractible. Since  $\{x_0,s_1,\dots,s_k\}\in \mathcal E$, there is a homotopy $F: \mathcal E \times \I \to \mathcal E$ such that
$$F(Y,0)=Y \quad\text{and}\quad F(Y,1)=\{x_0,s_1,\dots,s_k\}.$$
Define a homotopy
$$G:\mathcal A_n(X)\cap \langle U_0,U_1,\dots,U_k\rangle \times \I\to \mathcal A_n(X)\cap \langle U_0,U_1,\dots,U_k\rangle,\quad\text{by}$$
\begin{multline*} G(Y,t)=\\
\left\{
  \begin{array}{ll}
    Y\cup H(Y,4t), & \hbox{for $t\in[0,1/4]$;} \\
    Y\cup F(H(Y,1),4(t-1/4)), & \hbox{for $t\in[1/4,1/2]$;} \\
    H(Y,4(t-1/2))\cup C_{4(t-1/2)}\cup \{s_1,\dots,s_k\}, & \hbox{for $t\in[1/2,3/4]$;} \\
    F(H(Y,1),4(t-3/4))\cup S, & \hbox{for $t\in[3/4,1]$.}
  \end{array}
\right.
\end{multline*}
Homotopy $G$ is a deformation which contracts $\mathcal A_n(X)\cap \langle U_0,U_1,\dots,U_k\rangle$ in itself to the point $S$.
In particular, if $X$ is connected then $\mathcal A_n(X)=\mathcal A_n(X)\cap \langle X\rangle$ is contractible.

Summarizing: $\mathcal A_n(X)$ is a  locally connected space with an open base closed under finite intersections, each of whose elements is connected and homotopically trivial. It means that $\mathcal A_n(X)$ is an ANR (see~\cite[Corollary 4.2.18]{M}); if $X$ is connected then $\mathcal A_n(X)$ is contractible, hence an AR.

\end{proof}

\section{Universality and absorbing properties of  $\A_\w(X)$, $\A_{\w+1}(X)$ and $\mathcal H(X)$}\label{s:abs}

\begin{lemma}\label{l:puI} The pair $(\K(\I),\A_\w(\I))$ is $\overrightarrow{\mathbf\Sigma^0_4 }$-preuniversal. The pairs $(\K(\I),\A_{\w+1}(\I))$ and
$(\K(\I),\mathcal H(\I))$ are $\overrightarrow{\mathbf\Pi^1_1}$-preuniversal.
\end{lemma}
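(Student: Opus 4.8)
The plan is to reduce each assertion to a single ``master'' reduction map, obtained by transporting the standard model sets of the Hilbert cube into $\K(\I)$ and precomposing with the (pre)universal maps supplied by Lemma~\ref{l:1}. For the first assertion I would construct a continuous map $\Phi:\I^\w\to\K(\I)$ with $\Phi^{-1}(\A_\w(\I))=\Sigma_4$, modelled on the map $\psi_1$ from the proof of Theorem~\ref{p1} but with the binary weights replaced by the continuous coordinates $x_i\in\I$. Concretely I set
$$\Phi(x)=\cl\bigl\{\,2^{-j}+2^{-(j+k)}\,x_{2^{j-1}(2k+1)}:j\ge 1,\ k\in\w\,\bigr\}\subset\I,$$
a compactum inside the fixed rank-$3$ set of Theorem~\ref{p1}. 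Continuity of $\Phi$ is routine, since the generating points depend Lipschitz-continuously on $x$ and their tails cluster uniformly near $\{0\}\cup\{2^{-j}:j\ge1\}$. The point $0$ always lies in $\Phi(x)'$, while $2^{-j}\in\Phi(x)'$ exactly when $x_{2^{j-1}(2k+1)}>0$ for infinitely many $k$; hence $\Phi(x)$ has finitely many accumulation points precisely when $\exists n_0\,\forall n\ge n_0\,\forall^\infty k\,(x_{2^{n}(2k+1)}=0)$, i.e. when $x\in\Sigma_4$. Composing with the map $g:K\to\I^\w$ from Lemma~\ref{l:1}(3), which satisfies $g^{-1}(\Sigma_4)=C$ for the prescribed $F_{\sigma\delta\sigma}$-set $C\subseteq K$, yields $f=\Phi\circ g$ with $f^{-1}(\A_\w(\I))=C$, establishing $\overrightarrow{\mathbf\Sigma^0_4}$-preuniversality.

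For the two coanalytic assertions I must, given a compactum $K$ and a $\mathbf\Pi^1_1$-set $C\subseteq K$, produce a continuous $f:K\to\K(\I)$ whose value is a countable compactum exactly when $k\in C$. Writing $A=K\setminus C$ as the result of the Suslin operation on a regular scheme $(U_s)_{s\in\N^{<\w}}$ of open sets (so $U_\emptyset=K$, $\cl U_{s\hat{\ }i}\subseteq U_s$, $\diam U_s\to0$ along branches and $A=\bigcup_{\alpha}\bigcap_n U_{\alpha|n}$), the ``approximate tree'' $T_k=\{s:k\in U_s\}$ is well-founded precisely when $k\in C$, since $k\in A\iff[T_k]\neq\emptyset$. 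I would choose continuous weights $\varphi_s:K\to\I$ with $\{\varphi_s>0\}=U_s$ and $\varphi_{s\hat{\ }i}\le\varphi_s$, and use them to place, inside a fixed Cantor scheme of intervals $I_{s,b}\subseteq\I$ indexed by a node $s$ together with a binary word $b$ of length $|s|$, points $p_{s,b}(k)$ defined recursively so that a child sits at the centre of its interval when its weight is $1$ and collapses onto its parent when the weight is $0$. Setting $f(k)=\cl\{p_{s,b}(k):s\in\N^{<\w},\,b\in2^{|s|}\}\cup E$, where $E$ is a fixed convergent sequence guaranteeing infiniteness (needed to land in $\A_{\w+1}(\I)$ rather than $\F(\I)$), only the nodes $s\in T_k$ contribute genuinely distinct points.

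The decisive point, which I expect to be the main obstacle, is to arrange that a single infinite branch of $T_k$ already forces $f(k)$ to be uncountable, so that ``countable'' coincides with ``well-founded'' rather than merely ``$[T_k]$ countable''. This is exactly the purpose of the binary coordinate $b$: along any branch $\alpha\in[T_k]$ all weights $\varphi_{\alpha|n}(k)$ are positive, so every binary split is realised and the limit points $q_{\alpha,\beta}$ $(\beta\in2^\w)$ form a Cantor set, whence $f(k)$ contains a perfect set; conversely, when $T_k$ is well-founded there are no branch-limits and $f(k)\cup E$ remains countable. I would then verify continuity of $k\mapsto f(k)$ in the Vietoris topology from the continuous dependence of the $p_{s,b}$ on $k$ together with the uniform decay $\diam I_{s,b}\le2^{-|s|}$, and check that the interval gaps are chosen large relative to the tails of the scheme so that distinct $\beta$ keep distinct limits. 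Since $f(k)\cup E$ is always infinite and is countable iff $k\in C$, we obtain $f^{-1}(\A_{\w+1}(\I))=f^{-1}(\mathcal H(\I))=C$, giving the required $\overrightarrow{\mathbf\Pi^1_1}$-preuniversality of both pairs.
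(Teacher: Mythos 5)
Your handling of the $\overrightarrow{\mathbf\Sigma^0_4}$ assertion is the paper's argument verbatim: the authors use exactly your map (their $\psi(x)=\cl\{2^{-(n+1)}+2^{-(n+m+1)}x_{2^n(2m+1)}:n,m\in\w\}$ is your $\Phi$ after the shift $j=n+1$), check $\psi^{-1}(\A_\w(\I))=\Sigma_4$, and compose with the $\overrightarrow{\mathbf\Sigma^0_4}$-preuniversality of $(\I^\w,\Sigma_4)$ supplied by Lemma~\ref{l:1} and Corollary~\ref{cor:1}. For the coanalytic assertions you take a genuinely different route: the paper disposes of them in one line by citing Cauty's theorem that $(\K(\I),\mathcal H(\I))$ is $\overrightarrow{\mathbf\Pi^1_1}$-absorbing (with the remark that his construction already yields strong universality of $(\K(\I),\A_{\w+1}(\I))$), whereas you rebuild the reduction from scratch: a regular Suslin scheme for $K\setminus C$, the tree $T_k$ well-founded iff $k\in C$, binary duplication of nodes so that a single infinite branch forces a perfect subset, and an auxiliary convergent sequence $E$ to keep $f(k)$ outside $\F(\I)$. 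This is the classical Hurewicz-type argument behind \cite[Theorem (27.5)]{Ke}, upgraded to a continuous reduction from an arbitrary compact pair; it buys self-containedness where the paper buys brevity by citation, and the correct key idea (well-foundedness, not countability of $[T_k]$, is what must be encoded) is explicitly present. Two of the details you defer are where the actual work lies and should not be treated as routine: (i) Hausdorff-continuity of $k\mapsto\cl\{p_{s,b}(k)\}$ together with distinctness of the limits $q_{\alpha,\beta}$ requires that the displacement introduced at level $m$ be dominated by the separation created at level $m-1$; since a child of small weight collapses toward its already displaced parent, a literally fixed interval scheme $I_{s,b}$ is not enough, and the level-$m$ geometry has to be rescaled by the $k$-dependent separations of the previous level (your monotonicity $\varphi_{s\widehat{\ }i}\le\varphi_s$ is precisely what makes a geometric safety factor available); (ii) countability of $f(k)$ when $T_k$ is well-founded is not merely the absence of branch limits, because infinitely many siblings must accumulate somewhere in $\I$, so the scheme must be arranged so that sibling-accumulation contributes only countably many points, and the countability of the closure is then proved by induction on the rank of $T_k$. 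With those points supplied, your proof is complete and independent of \cite{C1}.
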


\begin{proof} Consider the map
 $$\psi:\I^\w\to \K(\I),\quad \psi\bigl((x_n)_{n\in\w}\bigr) =
\cl\bigl(\{2^{-(n+1)}+2^{-(n+m+1)}x_{2^n(2m+1)}:n,m\in\w\}\bigr).$$
Observe that the preimage $\psi^{-1}(\A_\w(\I))=\Sigma_4$. Now the strong $\overrightarrow{\mathbf\Sigma^0_4 }$-universality of the pair $(\I^\w,\Sigma_4)$ (Corollary~\ref{cor:1}) implies the
 $\overrightarrow{\mathbf\Sigma^0_4 }$-preuniversality of the pair $(\K(\I),\A_\w(\I))$.

The preuniversality of $(\K(\I),\mathcal H(\I))$ follows directly from the R. Cauty's result that the pair is $\overrightarrow{\mathbf\Pi^1_1}$-absorbing~\cite{Cu1}. In fact,  the construction in~\cite{Cu1} shows that  $(\K(\I),\A_{\w+1}(\I))$ is strongly $\overrightarrow{\mathbf\Pi^1_1}$-universal.

\end{proof}

Lemma~\ref{l:puI} implies

\begin{lemma}\label{l:eu} Let  $X$ be a dense subspace of a space $M$ such that for any non-empty set $U\subset M$ the intersection $U\cap X$ contains a topological copy of the segment $\I$. Then the pair $(\K(M),\A_\w(X))$ is everywhere $\overrightarrow{\mathbf\Sigma^0_4}$-preuniversal and pairs $(\K(M),\A_{\w+1}(X))$ and
$(\K(M),\mathcal H(X))$ are everywhere $\overrightarrow{\mathbf\Pi^1_1}$-preuniversal.
\end{lemma}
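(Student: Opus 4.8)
The plan is to reduce the whole statement to the base case $X=M=\I$ settled in Lemma~\ref{l:puI}, by transporting the relevant test compacta into a small copy of $\I$ sitting inside an arbitrary Vietoris neighborhood and then reading off the derived‑set data. Fix a nonempty open $\mathcal U\subseteq\K(M)$. The hypothesis (every $U\cap X$ contains a copy of $\I$) forces $M$ to have no isolated points, and a standard refinement of any Vietoris‑basic neighborhood inside $\mathcal U$ produces a nonempty basic open set $\langle U_1,\dots,U_k\rangle\subseteq\mathcal U$ with the $U_i$ \emph{pairwise disjoint}: pick $K_1\in\mathcal U$ meeting the original entries, take one point of $K_1$ in each entry, and shrink to pairwise disjoint neighborhoods of the (finitely many, distinct) chosen points, each contained in the entries that already contain it. Thus it suffices, for a given pair $(K,C)$ in the appropriate class, to build a map $f:K\to\langle U_1,\dots,U_k\rangle$ whose preimage of the relevant hyperspace equals $C$.

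Using the hypothesis on $U_1$, fix a topological copy $J\subseteq U_1\cap X$ of $\I$ with a homeomorphism $\phi:\I\to J$, and choose points $u_i\in U_i\cap X$ for $2\le i\le k$ (possible since $X$ is dense). The map $\phi$ induces the embedding $\K(\phi):\K(\I)\to\K(J)\subseteq\K(M)$, $A\mapsto\phi(A)$. By Lemma~\ref{l:puI} there is a map $g:K\to\K(\I)$ with $g^{-1}(\A_\w(\I))=C$ in the $\overrightarrow{\mathbf\Sigma^0_4}$‑case, and with $g^{-1}(\A_{\w+1}(\I))=C$, respectively $g^{-1}(\mathcal H(\I))=C$, in the two $\overrightarrow{\mathbf\Pi^1_1}$‑cases. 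Define the continuous map
\[
f:K\to\K(M),\qquad f(x)=\phi(g(x))\cup\{u_2,\dots,u_k\}.
\]
Since $\phi(g(x))\subseteq J\subseteq U_1$ is nonempty and $u_i\in U_i$, the image lies in $\langle U_1,\dots,U_k\rangle\subseteq\mathcal U$, and $f(x)\subseteq X$ for every $x$.

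The crux is the derived‑set bookkeeping. Because the $U_i$ are pairwise disjoint, each added point $u_i$ ($i\ge 2$) is isolated in $f(x)$ and contributes nothing to the derived set; and since $\phi$ is a homeomorphism onto the closed set $J$, one has $f(x)'=\phi(g(x))'=\phi\bigl(g(x)'\bigr)$, so that $|f(x)'|=|g(x)'|$ and $f(x)$ is countable precisely when $g(x)$ is. Consequently $f(x)\in\A_\w(X)\iff g(x)\in\A_\w(\I)$, and likewise for $\A_{\w+1}$ and $\mathcal H$. Therefore $f^{-1}(\A_\w(X))=g^{-1}(\A_\w(\I))=C$, and analogously in the other two cases, which is exactly the asserted everywhere $\overrightarrow{\mathbf\Sigma^0_4}$‑ (resp. $\overrightarrow{\mathbf\Pi^1_1}$‑) preuniversality.

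The only genuinely delicate step is the localization: refining an arbitrary open $\mathcal U$ to a \emph{pairwise disjoint} basic Vietoris neighborhood, so that adjoining the fixed points $u_2,\dots,u_k$ neither pushes the image out of $\langle U_1,\dots,U_k\rangle$ nor perturbs the derived set. Once disjointness is arranged both consequences are automatic, so that refinement—resting on $M$ being dense‑in‑itself, itself a consequence of the $\I$‑copy hypothesis—is the linchpin; everything else is a direct transfer of Lemma~\ref{l:puI} through the embedding $\K(\phi)$.
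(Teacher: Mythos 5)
Your argument is correct and is exactly the reduction the authors intend: the paper states Lemma~\ref{l:eu} as an immediate consequence of Lemma~\ref{l:puI} without writing out a proof, and your transfer of the test map through a copy of $\I$ embedded in $U_1\cap X$, together with the refinement to a pairwise disjoint Vietoris-basic neighborhood and the observation that the adjoined isolated points $u_2,\dots,u_k$ do not affect the derived set, supplies precisely the omitted details. Nothing to correct.
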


\

\begin{theorem}\label{t:uni}
If  $X$ is a dense subset of a dense-in-itself space $M$ and $X$ is $LC^0$ in $M$, then
 the pair $(\K(M),\A_\w(X))$ is strongly $\overrightarrow{\mathbf\Sigma^0_4}$-universal. The pairs $(\K(M),\A_{\w+1}(X))$ and $(\K(M),\mathcal H(X))$ are strongly $\overrightarrow{\mathbf\Pi^1_1}$-universal.
\end{theorem}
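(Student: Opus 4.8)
The plan is to obtain strong universality from everywhere preuniversality by invoking the equivalence $(2)\Leftrightarrow(1)$ of Theorem~\ref{t:B2}. Concretely, I would apply that theorem with the Lawson semilattice taken to be $\K(M)$ and with the distinguished subset taken, in turn, to be $\A_\w(X)$, $\A_{\w+1}(X)$ and $\mathcal H(X)$. The whole argument then splits into two tasks: first, to produce everywhere $\overrightarrow{\mathbf\Sigma^0_4}$- (respectively $\overrightarrow{\mathbf\Pi^1_1}$-) preuniversality of the three pairs through Lemma~\ref{l:eu}; and second, to certify that the structural hypotheses of Theorem~\ref{t:B2} are met, namely that $\K(M)$ is Lawson, that each of the three hyperspaces is a dense coideal in $\K(M)$ which is $LC^0$ in $\K(M)$, and that the ambient Borel/projective class is $\mathbf\Pi^0_2$-hereditary.

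For the first task I would verify the standing hypothesis of Lemma~\ref{l:eu}, that $U\cap X$ contains a topological copy of $\I$ for every nonempty open $U\subseteq M$. Fix $x\in U\cap X$ and apply the definition of $LC^0$ with $U_x:=U$ to obtain a neighbourhood $V_x$ such that any two points of $V_x\cap X$ are joined by a path in $U\cap X$. Since $M$ is dense-in-itself and $X$ is dense in $M$, the set $V_x\cap X$ contains two distinct points, so $U\cap X$ carries a nonconstant path; its image is a nondegenerate Peano continuum, which is arcwise connected and therefore contains an arc lying in $U\cap X$. With this hypothesis secured, Lemma~\ref{l:eu} gives that $(\K(M),\A_\w(X))$ is everywhere $\overrightarrow{\mathbf\Sigma^0_4}$-preuniversal and that $(\K(M),\A_{\w+1}(X))$ and $(\K(M),\mathcal H(X))$ are everywhere $\overrightarrow{\mathbf\Pi^1_1}$-preuniversal, which is exactly condition $(2)$ of Theorem~\ref{t:B2} for the respective classes.

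For the second task I would record that $\K(M)$ is a Lawson semilattice and that $\A_\w(X)$, $\A_{\w+1}(X)$, $\mathcal H(X)$ are dense coideals (Examples~\ref{ex:L}) which are $LC^0$ in $\K(M)$ by Theorem~\ref{t:6}(1), their density being inherited from the homotopy density asserted there. It remains to check that $\mathbf\Sigma^0_4$ and $\mathbf\Pi^1_1$ are $\mathbf\Pi^0_2$-hereditary: a $G_\delta$ subset of an absolute additive class-$4$ set is again absolute $\mathbf\Sigma^0_4$ (being the intersection of a $\mathbf\Sigma^0_4$ set with a $\mathbf\Pi^0_2\subseteq\mathbf\Sigma^0_4$ set), and a $G_\delta$ subset of a coanalytic set is coanalytic. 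Feeding everywhere preuniversality into the implication $(2)\Rightarrow(1)$ of Theorem~\ref{t:B2} then delivers the strong $\overrightarrow{\mathbf\Sigma^0_4}$-universality of $(\K(M),\A_\w(X))$ and the strong $\overrightarrow{\mathbf\Pi^1_1}$-universality of $(\K(M),\A_{\w+1}(X))$ and $(\K(M),\mathcal H(X))$. I expect the one step carrying genuine content to be the arc-existence argument that unlocks Lemma~\ref{l:eu}: one must upgrade the purely local path-connectedness furnished by $LC^0$ to an honestly embedded copy of $\I$ inside $U\cap X$, and it is precisely here that density of $X$ and dense-in-itselfness of $M$ are combined with the classical fact that a nondegenerate path image in a metric space contains an arc; the remaining Lawson/coideal/$LC^0$ data and the $\mathbf\Pi^0_2$-heredity are bookkeeping drawn from Examples~\ref{ex:L} and Theorem~\ref{t:6}.
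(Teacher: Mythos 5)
Your proposal is correct and follows essentially the same route as the paper: both deduce the strong universality from Theorem~\ref{t:B2} via the everywhere preuniversality supplied by Lemma~\ref{l:eu}, together with the dense-coideal and $LC^0$ data from Examples~\ref{ex:L} and Theorem~\ref{t:6}. Your explicit verification that $U\cap X$ contains a copy of $\I$ (upgrading a nonconstant path to an embedded arc inside its Peano-continuum image) is a detail the paper leaves implicit, and you handle it correctly.
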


\begin{proof}
Recall that $\A_\w(X)$, $\A_{\w+1}(X)$ and $\mathcal H(X)$  are  dense coideals in the Lawson semilattice  $\K(M)$ (Examples~\ref{ex:L}) which  is $LC^0$ in $\K(M)$ (Theorem~\ref{t:6}).  In view of Lemma~\ref{l:eu},  the conclusion follows from Theorem~\ref{t:B2}.
\end{proof}

\

\begin{theorem}\label{t:abs}
Let $X$ be a dense $F_\sigma$-subset of a dense-in-itself   Polish space $M$ and assume $X$ is $LC^0$ in $M$.
\begin{enumerate}
\item If $M$ is locally compact, then  the pair $(\K(M),\A_\w(X))$ is  $\overrightarrow{\mathbf\Sigma^0_4}$-absorbing. The pairs $(\K(M),\A_{\w+1}(X))$ and $(\K(M),\mathcal H(X))$ are  $\overrightarrow{\mathbf\Pi^1_1}$-absorbing.
\item
If $M$ is $LC^0$ and nowhere locally compact (i.e., no point has a compact neighborhood), then $\A_\w(X)$ is $\mathbf\Sigma^0_4$-absorbing and sets $\A_{\w+1}(X)$, $\mathcal H(X)$ are $\mathbf\Pi^1_1$-absorbing.
\end{enumerate}

\end{theorem}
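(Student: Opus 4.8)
The plan is to prove Theorem~\ref{t:abs} by combining the strong universality already established in Theorem~\ref{t:uni} with the absorbing-pair machinery from Section~5, treating cases (1) and (2) separately according to whether $\K(M)$ is an $\I^\w$-manifold or an $\R^\w$-manifold. First I would verify that the ambient hyperspace $\K(M)$ is the right kind of manifold. In case (1), $M$ is a nondegenerate, connected (implicitly, via $LC^0$), locally compact, locally connected Polish space, so by the results recalled in the introduction $\K(M)$ is an $\I^\w$-manifold; in case (2), $M$ is Polish, nowhere locally compact and $LC^0$, so $\K(M)$ is a Polish ANR (Theorem~\ref{t:KSY} applied to the dense $LC^0$ subsemilattice situation) which is nowhere locally compact, and one argues it is an $\R^\w$-manifold via Toruńczyk's theorem after checking SDAP. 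This manifold identification is what lets us invoke the fundamental Theorem~\ref{t:fund} and the pair-to-space passage of Fact~\ref{f5}.

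Next I would assemble the three defining conditions of a $\vec{\C}$-absorbing pair for each of $\A_\w(X)$, $\A_{\w+1}(X)$ and $\mathcal H(X)$. The strong $\vec{\C}$-universality (with $\C=\mathbf\Sigma^0_4$ for $\A_\w(X)$ and $\C=\mathbf\Pi^1_1$ for the other two) is exactly the content of Theorem~\ref{t:uni}, so that condition is free. The membership condition $\A_\w(X)\in\mathbf\Sigma^0_4$ follows from the Borel-complexity computations of Section~2: since $X$ is a dense $F_\sigma$-subset of the Polish space $M$, the derived-set analysis gives $\A_\w(X)$ as $F_{\sigma\delta\sigma}$, hence in $\mathbf\Sigma^0_4$, while $\A_{\w+1}(X)$ and $\mathcal H(X)$ are coanalytic (the Hurewicz-set phenomenon), hence in $\mathbf\Pi^1_1$. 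The remaining condition is that each set be contained in a $\sigma$-compact $\sigma Z$-set in $\K(M)$. Here I would use that $X=\bigcup_i X_i$ is $F_\sigma$ with $X_i$ compact (after intersecting with an exhaustion), so that all three hyperspaces sit inside the $\sigma$-compact set $\{K\in\K(M): K\subset X\}=\K(X)$, and then argue each $\K(X_i)$-type piece is a $Z$-set using that its complement is homotopy dense (Theorem~\ref{t:6} gives that $\A_\w(X)$ etc. are homotopy dense in $\K(M)$, and a $\sigma$-compact set avoiding a homotopy dense ANR is a $\sigma Z$-set by Fact~\ref{f7}).

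With the pair $(\K(M),\A_\w(X))$ shown to be $\overrightarrow{\mathbf\Sigma^0_4}$-absorbing in both cases, part (1) of the theorem is complete for the $\I^\w$-manifold $M$, since absorbing for the \emph{pair} is precisely what is asserted there. For part (2), where $\K(M)$ is an $\R^\w$-manifold, I would pass from the absorbing pair to the absorbing \emph{set} via Fact~\ref{f5}: the hypotheses are that $\A_\w(X)$ has SDAP, is homotopy dense in the ANR $\K(M)$, and the pair is strongly absorbing. Homotopy density is Theorem~\ref{t:6}(1); strong absorbing is what we just built; SDAP follows from Fact~\ref{f8} once we note $\A_\w(X)$ is homotopy dense in the (now non-locally-compact) ANR and is covered by countably many $Z$-sets as above, or more directly from the $\R^\w$-manifold structure. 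The same argument handles $\A_{\w+1}(X)$ and $\mathcal H(X)$ with $\C=\mathbf\Pi^1_1$.

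The main obstacle I anticipate is the verification that the three hyperspaces are contained in a genuinely $\sigma$-compact $\sigma Z$-set, and relatedly the manifold identification in case (2). The delicate point is that $\A_\w(X)$ and $\mathcal H(X)$ consist of \emph{infinite} compacta, so containment in $\K(X)$ alone need not be $\sigma$-compact when $X$ is merely $F_\sigma$; I expect to control this by writing $X=\bigcup_i X_i$ with $X_i$ compact and showing that each $\{K\in\K(M):K\subset X_i\}=\K(X_i)$ is compact and a $Z$-set, so their union is the required $\sigma$-compact $\sigma Z$-set containing all of $\A_\w(X)\subset\K(X)=\bigcup_i\K(X_i)$. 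Establishing that each such $\K(X_i)$ is a $Z$-set reduces, via Fact~\ref{f7} and the homotopy density from Theorem~\ref{t:6}, to a routine but careful check. The nowhere-locally-compact case additionally requires confirming SDAP so that Toruńczyk's criterion yields the $\R^\w$-manifold structure; I would obtain SDAP from Fact~\ref{f8}, whose hypotheses (homotopy density plus a countable $Z$-set cover) are exactly the ingredients just discussed, so the whole argument closes on the single technical lemma about $\sigma$-compact $\sigma Z$-sets.
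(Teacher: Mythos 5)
There is a genuine gap in your verification of the third condition for an absorbing pair, namely that the hyperspaces are contained in a $\sigma$-compact $\sigma Z$-set in $\K(M)$. You propose to use $\{K\in\K(M):K\subset X\}=\K(X)=\bigcup_i\K(X_i)$ with $X_i$ compact, but this fails on three counts. First, the identity $\K(X)=\bigcup_i\K(X_i)$ is false: a compact subset of an $F_\sigma$ set need not lie in any single compact piece (e.g.\ $\{0\}\cup\{1/n:n\in\N\}\subset\Q=\bigcup_i F_i$ with $F_i$ finite). Second, $\K(X)$ is not $\sigma$-compact for a general $F_\sigma$ set $X$ ($\K(\Q)$ is a classical example of a coanalytic non-Borel subset of $\K(\R)$). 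Third, and most decisively, the theorem allows $X=M$ (e.g.\ $X=M=\I$), in which case $\K(X)=\K(M)$ is the ambient Hilbert cube and certainly not contained in a $\sigma Z$-set of itself; so no argument that covers all of $\K(X)$ by $Z$-sets can work. The covering must exploit that the members of $\mathcal H(X)$ are \emph{countable} compacta, not merely that they are subsets of $X$.

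The paper's proof does exactly this, following Cauty: every nonempty countable compactum has an isolated point, so $\mathcal H(X)\subset\bigcup_{k}Z_k$ where $Z_k=\{K\in\K(M):|K|\ge 2,\ \exists x\in K\ d(x,K\setminus\{x\})\ge 1/k\}$ for a suitable path-length metric $d$ on $X$. Each $Z_k$ is closed in $\K(M)$ (hence $\sigma$-compact when $M$ is locally compact), and is a $Z$-set because the composition of a deformation through $\K(X)$ with the expansion deformation $E_t(K)=\{x\in X:d(x,K)\le t\}$ pushes $\K(M)$ onto compacta with no isolated points, hence off $\bigcup_k Z_k$. This same family $Z_k$ is then reused in case (2) (intersected with the hyperspaces, via Facts~\ref{f6} and~\ref{f7}) to produce the $Z$-set decomposition required for an absorbing \emph{set}. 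The remainder of your outline --- quoting Theorem~\ref{t:uni} for strong universality, Corollary~\ref{Polish} and the coanalyticity of $\K(X)$ for class membership, the $\R^\w$-manifold structure of $\K(M)$ in case (2) together with Theorem~\ref{B1} and Fact~\ref{f5} --- matches the paper, but the argument does not close without replacing your $\sigma Z$-set covering by one that uses the isolated-point structure of countable compacta.
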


\begin{proof}
By Corollary~\ref{Polish}, the hyperspace $\A_\w(X)$ is an  $F_{\sigma\delta\sigma}$-subset of the Polish space $\K(M)$, hence $\A_{\w}(X)\in \mathbf\Sigma^0_4 $. By Theorem~\ref{t:uni},  $(\K(M),\A_\w(X))$ is strongly $\overrightarrow{\mathbf\Sigma^0_4}$-universal. The spaces $\A_{\w+1}(X)$ and $\mathcal H(X)$ belong to class $\mathbf\Pi^1_1$ if $X$ is Polish; if not, then by~\cite[(33.5)]{Ke} $\K(X)$ is in  $\mathbf\Pi^1_1$ and   the hyperspaces $\A_{\w+1}(X)= \A_{\w+1}(M) \cap \K(X)$ and $\mathcal H(X)= \mathcal H(M)\cap \K(X)$ also belong to  $\mathbf\Pi^1_1$ as intersections of a Polish space and $\mathbf\Pi^1_1$-sets.

Notice that $\A_\w(X)\subset \A_{\w+1}(X)\subset \mathcal H(X)$. Therefore in case (1), it remains to find  a $\sigma$-compact $\sigma Z$-subset of $\K(M)$ that covers $\mathcal H(X)$.
 We may use the following idea due to R. Cauty~\cite[Lemme 5.6]{C1}. We can assume  that the metric $\rho$  in $M$ is bounded by 1. The locally path-connected space $X$ admits an equivalent metric
$$d(x,y)=\left\{
           \begin{array}{ll}
             \inf\{\diam_\rho(C): \text{$C$ is a continuum in $X$ containing $x,y$}\},\\
 \hbox{\text{if such continuum $C$ exists};} \\
             1, \hbox{otherwise.}
           \end{array}
         \right.
$$
Define
$$
Z_k=\{K\in \K(M): |K|\ge 2\quad\text{and}\quad (\exists x\in K)\, d(x,K\setminus \{x\})\ge\frac{1}{k}\}.$$
Each $Z_k$ is a closed subset of $\K(M)$, thus it is $\sigma$-compact. In order to show that the sets  are  $Z$-sets in $\K(M)$, we apply to them a deformation $H_t:\K(M)\to \K(M)$ through $\K(X)$ (it exists by Lemma~\ref{l:5}) followed by  the, so called, expansion deformation
$$E_t:\K(X) \to  \K(X), \quad E_t(K)=\{x\in X: d(x,K)\le t\}.$$
More precisely, for $t>0$ take a map $f_t=E_t\circ H_t:\K(M)\to \K(X)$.
Each $Z_k$  contains  an isolated point, while  $E_t(K)$ for $t>0$ has no isolated points. Hence,  for   sufficiently small $t>0$,
$f_t$
maps $\K(M)$ into  $\K(M)\setminus \bigcup_{k\in\N} Z_k$ and   approximates the identity map on $\K(M)$. Moreover, since each  $A\in \mathcal H(X)$ contains an isolated point, we get the desired inclusion $\mathcal H(X)\subset \bigcup_{k\in\N} Z_k$.

In case (2),  $\K(M)$ is an $\R^\w$-manifold (see~\cite{Cu3}) and $\A_\w(X),$ $\A_{\w+1}(X),$ $\mathcal H(X)$ being homotopy dense in $\K(M)$, they are ANR's with SDAP by Theorem~\ref{B1}. Moreover, since the pairs $(\K(M),\A_\w(X))$, $(\K(M),\A_{\w+1}(X))$ and $(\K(M),\mathcal H(X))$ are strongly universal in respective classes~\ref{t:uni}, the spaces $ \A_\w(X)$, $\A_{\w+1}(X)$ and $\mathcal H(X)$  are universal in the classes by~\ref{f5}.

Since $Z$-sets in the $\R^\w$-manifold $\K(M)$ are strong $Z$-sets (Fact~\ref{f7}),  the sets $Z_k\cap \A_\w(X)$, $Z_k\cap\A_{\w+1}(X)$ are $Z$-sets in $\A_\w(X)$, $\A_{\w+1}(X)$ and $\mathcal H(X)$, respectively,  by Theorem~\ref{t:6} and Fact~\ref{f6}.   They also belong to the classes. Therefore all sufficient conditions for absorbing sets in the classes  are satisfied.

\end{proof}

Finally, we get the following characterzations.
\begin{theorem}\label{t:abs1}
$\A_\w(X)\cong \Sigma_4$, $A_{\w+1}(X)\cong \mathcal H(X)\cong \mathcal H(\I)$  in each of the following cases.
\begin{enumerate}
\item
 $X$ is  nondegenerate, connected, locally connected and locally compact (i.e. $X$ is a nondegenerate generalized Peano continuum); if $X$ is compact (i.e., $X$ is a nondegenerate Peano continuum), then
\begin{itemize}
\item
$(\K(M),\A_\w(X))\cong (\I^\w,\Sigma_4)$,
\item
$(\K(M),\A_{\w+1}(X))\cong (\K(M),\mathcal H(X))\cong(\K(\I),\mathcal H(\I))$.
\end{itemize}
\item
$X$ is  nondegenerate, Polish, connected, locally connected and nowhere locally compact.
\end{enumerate}
\end{theorem}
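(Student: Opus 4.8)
The plan is to read off all the asserted homeomorphisms from the absorbing-set dichotomy already established in Theorem~\ref{t:abs}, applied with $M=X$ in both cases, and then to invoke the appropriate uniqueness theorem: Theorem~\ref{t:fund} for the \emph{pair}-absorbing situation (1) and Theorem~\ref{t:BM} for the \emph{space}-absorbing situation (2). Theorem~\ref{t:abs} does all the real work; what is left is to check its hypotheses for $M=X$, to name the ambient hyperspace as a manifold, and to confirm that every space in sight is an AR, so that the ``in particular'' clauses of those uniqueness theorems fire.

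First I would verify the hypotheses of Theorem~\ref{t:abs} with $M=X$. In both cases $X$ is connected and nondegenerate, hence dense-in-itself, and $X$ is trivially a dense (closed, so $F_\sigma$) subset of $M=X$. The one nonformal input is that $X$ is $LC^0$ in itself: a connected, locally connected, completely metrizable space is locally path-connected (Bing), so both a generalized Peano continuum (case (1)) and a locally connected Polish space (case (2)) are $LC^0$. Together with local compactness in case (1) and nowhere local compactness in case (2), the hypotheses of Theorem~\ref{t:abs}(1) and (2) are met. This yields: in case (1) the pairs $(\K(X),\A_\w(X))$, $(\K(X),\A_{\w+1}(X))$, $(\K(X),\mathcal H(X))$ are $\overrightarrow{\mathbf\Sigma^0_4}$- resp. $\overrightarrow{\mathbf\Pi^1_1}$-absorbing; in case (2) the sets $\A_\w(X)$, $\A_{\w+1}(X)$, $\mathcal H(X)$ are $\mathbf\Sigma^0_4$- resp. $\mathbf\Pi^1_1$-absorbing.

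Next I record the ambient and target data. Since $X$ is connected, Theorem~\ref{t:6} gives that $\A_\w(X),\A_{\w+1}(X),\mathcal H(X)$ are AR's; the same theorem with $X=M=\I$ shows $\mathcal H(\I)$ is an AR, and $\mathcal H(\I)$ is $\mathbf\Pi^1_1$-absorbing with $(\K(\I),\mathcal H(\I))$ being $\overrightarrow{\mathbf\Pi^1_1}$-absorbing by Cauty's theorem. The model $\Sigma_4$ is star-shaped about the origin (scaling by $t\in[0,1]$ preserves the coordinatewise zero-pattern defining $\Sigma_4$), hence connected; being a dense $LC^0$ subsemilattice of the Lawson semilattice $\I^\w$ (Lemma~\ref{l:2}) it is an AR by Theorem~\ref{t:KSY}, and by Corollary~\ref{cor:1} the pair $(\I^\w,\Sigma_4)$ is $\overrightarrow{\mathbf\Sigma^0_4}$-absorbing while $\Sigma_4$ is $\mathbf\Sigma^0_4$-absorbing. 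In case (1), $\K(X)$ is an $\I^\w$-manifold (and $\cong\I^\w$, an AR, when $X$ is a Peano continuum); in case (2), $\K(X)$ is an $\R^\w$-manifold. Now the assembly: in case (1) both members of each comparison are AR's realized as $\vec{\C}$-absorbers in $\I^\w$-manifolds, so Theorem~\ref{t:fund} delivers $\A_\w(X)\cong\Sigma_4$ and $\A_{\w+1}(X)\cong\mathcal H(X)\cong\mathcal H(\I)$; when $X$ is compact, $\K(X)\cong\I^\w$ and $\K(\I)\cong\I^\w$ are AR's, so the last clause of Theorem~\ref{t:fund} upgrades these to homeomorphisms of pairs $(\K(X),\A_\w(X))\cong(\I^\w,\Sigma_4)$ and $(\K(X),\A_{\w+1}(X))\cong(\K(X),\mathcal H(X))\cong(\K(\I),\mathcal H(\I))$. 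In case (2) all the relevant sets are space-level absorbers and AR's, so Theorem~\ref{t:BM} gives the same three homeomorphisms directly.

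The genuinely nontrivial mathematics lives in the cited results, so the obstacles are organizational rather than computational. The two points needing care are: (a) deducing $LC^0$-ness from local connectedness together with completeness, which is precisely where Bing's arcwise-connectedness theorem enters and without which Theorem~\ref{t:abs} is inapplicable; and (b) keeping the two uniqueness theorems, and the compact/noncompact subdivision of case (1), aligned. In particular one must notice that the pair homeomorphism can be claimed only for compact $X$: a homeomorphism $(\K(X),\A_\w(X))\cong(\I^\w,\Sigma_4)$ forces $\K(X)\cong\I^\w$, hence forces $\K(X)$ and therefore $X$ to be compact, whereas for a noncompact generalized Peano continuum $\K(X)\cong\I^\w\setminus\{\mathrm{pt}\}$ is a noncompact $\I^\w$-manifold and only the space-level conclusion survives.
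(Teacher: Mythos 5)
Your proposal is correct and follows essentially the same route as the paper: apply Theorem~\ref{t:abs} with $M=X$, then invoke Theorem~\ref{t:fund} (plus Curtis--Schori for compact $X$) in case (1) and Theorem~\ref{t:BM} in case (2), with Corollary~\ref{cor:1} and Theorem~\ref{t:6} supplying the model absorbers and the AR property. The extra details you supply (deducing $LC^0$-ness from local connectedness and completeness, and noting that the pair version is available only for compact $X$) are exactly the points the paper leaves implicit.
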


\begin{proof}
(1) for noncompact $X$ follows from Theorem~\ref{t:abs}, Corollary~\ref{cor:1}, Theorem~\ref{t:6} and Theorem~\ref{t:fund}. In case when $X$ is a nondegenerate Peano continuum, we also use the Curtis-Schori characterization  $\K(M)\cong \I^\w$~\cite{Cu1}.

(2) follows from  Theorem~\ref{t:abs}, Corollary~\ref{cor:1}, Theorem~\ref{t:6} and Theorem~\ref{t:BM}.

\end{proof}

\section{Hyperspaces $\A_n(\I)$, $n\in\N$}\label{interval}

Recall that $\K(\I)\cong \I^\w$. It will be more convenient to work with the pair $(\K(\J),\A_n(\J))$, where $\J=[-1,1]$.

\begin{lemma}\label{l:A1}
The pair $(\K(\J),\A_n(\J))$ is strongly $\overrightarrow{\mathbf\Pi^0_3}$-universal.
\end{lemma}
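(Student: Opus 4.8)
The plan is to combine an explicit accumulation--point coding with the strong universality of the standard absorber $(\I^\w,\Pi_3)$ from Corollary~\ref{cor:1}. First I would record the structural facts framing the argument: $\K(\J)\cong\I^\w$ is a Hilbert cube, and $\A_n(\J)$ is an $F_{\sigma\delta}$-set (Theorem~\ref{p'}(1) and Corollary~\ref{Polish}), hence a member of $\mathbf\Pi^0_3$. I would also stress at the outset the feature that dictates the whole strategy: although adjoining any compactum to one having more than $n$ accumulation points preserves that property, $\A_n(\J)$ is \emph{not} a subsemilattice of $\K(\J)$ (a union of two sets each with $\le n$ accumulation points may have up to $2n$). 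Thus Theorem~\ref{t:B2} is unavailable, and the passage from preuniversality to strong universality must be carried out by hand rather than through the Lawson--semilattice machinery.

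Second, I would establish everywhere $\overrightarrow{\mathbf\Pi^0_3}$-preuniversality by a direct coding. Given $(K,C)\in\overrightarrow{\mathbf\Pi^0_3}$, write $C=\bigcap_{j\in\w}A_j$ with each $A_j=\bigcup_{l}F_{j,l}$ an $F_\sigma$-set, $F_{j,l}$ closed and increasing in $l$, and choose continuous $g_{j,l}:K\to\I$ with $g_{j,l}^{-1}(0)=F_{j,l}$. Fix base points $q_j\to q_\ast$ in $\J$ and, for each $j$, an auxiliary block $q_j^{(1)},\dots,q_j^{(n)}$ clustering at $q_\ast$; let $f(t)$ be the closure of one fixed sequence accumulating at $q_\ast$ together with, for all $j,i,l$, a point at distance comparable to $\min\{g_{j,l}(t),2^{-l}\}$ from $q_j^{(i)}$. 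Then each $q_j^{(i)}$ is an accumulation point of $f(t)$ exactly when $t\notin A_j$, so $|f(t)'|=1$ for $t\in C$ while $|f(t)'|\ge 1+n>n$ whenever some $A_j$ is missed; hence $f^{-1}(\A_n(\J))=C$. Squeezing this picture into an arbitrary basic open set by a coordinatewise contraction, as in the final paragraph of Lemma~\ref{l:1}, upgrades this to everywhere preuniversality.

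Third, and this is the technical core, I would upgrade to strong universality directly. Fix $(K,C)$, a closed $B\subset K$ and $f:K\to\K(\J)$ that is a $Z$-embedding on $B$ with $(f\upharpoonright B)^{-1}(\A_n(\J))=B\cap C$, together with an open cover to approximate within. Since $\F(\J)$ is homotopy dense in $\K(\J)$ (Lemma~\ref{l:5}), I would deform $f$ off $B$ into finite-valued maps via a Urysohn factor $\lambda:K\to\I$ with $\lambda^{-1}(0)=B$, getting $f_1$ with $f_1\upharpoonright B=f\upharpoonright B$ and $f_1(t)$ finite for $t\notin B$; this annihilates every accumulation point of the non-coding part away from $B$. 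Then, in $\e$-balls around continuously chosen points of the finite set $f_1(t)$ (so as to keep $g$ within the prescribed cover of $f$), I would graft the scaled coding gadget $G(t)$ of the previous paragraph, arranging $G(t)'$ to be a single point for $t\in C$ and to have more than $n$ points otherwise. For $t\notin B$ the derived set of $g(t)=f_1(t)\cup G(t)$ equals $G(t)'$, so $g^{-1}(\A_n(\J))\setminus B=C\setminus B$; on $B$ nothing is changed. Choosing the family $\{g_{j,l}\}$ to separate points of $K$ makes $g$ injective, and a general-position argument in the Hilbert cube $\K(\J)$, performed rel the $Z$-set $f(B)$ (Remarks~\ref{rem}), turns $g$ into a $Z$-embedding.

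The main obstacle is the interface along $B$: the intrinsic coding carried by the accumulation points of $f$ on $B$ must be glued continuously to the grafted coding carried by $G$ off $B$, while preserving, for every $t$ in the collar of $B$, the equivalence $|g(t)'|\le n\iff t\in C$. Because $t\mapsto|g(t)'|$ is only upper semicontinuous, the delicate point is to let $G(t)$ emerge from $\emptyset$ and the finite-ization of $f(t)$ switch on at compatible rates as $t$ leaves $B$, so that no spurious accumulation point survives and none required is lost in the transition. I would control this using that $f(B)$ is a $Z$-set — allowing the grafts to be kept off $f(B)$ and unknotted rel $B$ — and that near $B$ the map $f_1$ stays $\K$-close to $f$, so that its derived set is dominated by $f(t)'$. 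This accumulation-point bookkeeping across the collar is the part I expect to demand the most care.
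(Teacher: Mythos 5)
Your overall architecture coincides with the paper's: deform $f$ into finite sets off $B$ at a rate $\mu(x)$ proportional to $\min\{\epsilon,\min\{\dist(f(x),f(z)):z\in B\}\}$, graft a $\mu(x)$-scaled coding compactum at a continuously selected point of the resulting finite set (the paper uses $\min H(f(x),\mu(x))$ --- this continuous selection is precisely where the arc structure of $\J$ enters, cf.\ Remark~\ref{rem0}), and observe that for $x\notin B$ the derived set of $g(x)$ is that of the gadget alone. In particular, the ``interface along $B$'' that you single out as the main obstacle is not one: on $B$ we have $g=f$ and the hypothesis $(f\upharpoonright B)^{-1}(\A_n(\J))=B\cap C$ disposes of those points, while every $x\notin B$ has $\mu(x)>0$ and is settled outright by the gadget; no collar estimate on $t\mapsto|g(t)'|$ is needed, and continuity of $g$ at $B$ is automatic because $\mu$ vanishes there. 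Your direct $F_{\sigma\delta}$ coding for preuniversality is a legitimate alternative to the paper's map $\phi_n$, which instead factors through the strong universality of $(\I^\w,\Pi_3)$ via an embedding $\zeta$ with $\zeta^{-1}(\Pi_3)=C$.

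The genuine gaps are in your final step. First, you cannot obtain the $Z$-embedding property by a ``general-position argument'' after the fact: any perturbation of $g$ destroys the equality $g^{-1}(\A_n(\J))=C$, which is not stable under approximation, so the $Z$-set property of the image must be proved for the map you actually built. The paper does this directly: for $x\notin B$ the set $g(x)$ is infinite, whereas the finite-set deformation $H(\cdot,t)$, $t>0$, takes values in $\F(\J)$ and hence instantly misses $g(\I^\w)\setminus g(B)$, exhibiting it as a $\sigma Z$-set; together with the $Z$-set $g(B)=f(B)$ this makes the compact image a $Z$-set. Second, injectivity: that the functions $g_{j,l}$ separate points of $K$ does not make the \emph{set}-valued gadget injective, since the gadget is an unlabelled compactum and distinct parameters can yield the same set (the points attached to the various $q_j^{(i)}$ can coincide or permute). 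The paper builds injectivity in explicitly by adjoining to the coding compactum a strictly monotone auxiliary sequence $\{-(2^{-(j+1)}+x_j2^{-(j+2)}):j\in\w\}$, from which the scale $\mu(x)$ and every coordinate of $\zeta(x)$, hence $x$ itself, can be read off from the set $g(x)$. These two points require explicit constructions of this kind rather than appeals to genericity or separation of points.
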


\begin{proof}
We apply an approach developed in~\cite{GM}.

Let $C$ be an $F_{\sigma\delta}$-subset of  $\I^\w$,  $B$ a closed subset of $\I^\w$, $f:\I^\w\to \K(\J)$ an embedding which is a $Z$-embedding on $B$ and $\epsilon>0$.
Our goal is to find a $Z$-embedding $g:\I^\w\to \K(\J)$ such that $g\upharpoonright B=f\upharpoonright B$, $g^{-1}(\A_n(\J))\setminus B=C\setminus B$, and $\dist (f(x),g(x))<\epsilon$ for each $x\in \I^\w$, where $\dist$ denotes the Hausdorff distance in the hyperspace $\K(\J)$
(see Remarks~\ref{rem}).

The first ingredient in a construction of an approximation $g$ is the embedding $\phi_n:\I^w\to \K(\J),$
\begin{multline}\label{e:phi}
 \phi_n\bigl((x_j)_{j\in\w}\bigr)= \{-1\}\cup \{-(2^{-(j+1)}+x_j2^{-(j+2)}): j\in\w\}\cup \chi(n) \cup\\
\cl\bigl(\bigl\{2^{-(j+1)}+2^{-(j+k+1)}x_{2^{j-n}(2k+1)}:j\ge n, k\in\w\bigr\}\bigr),
\end{multline}
where $\chi(n)$ is defined in~\eqref{e:chi}. The positive part of $ \phi_n\bigl((x_j)_{j\in\w}\bigr)$  is responsible for the property $\phi_n^{-1}(\mathcal A_n(\J))=\Pi_3,$
while the negative one  exhibits 1-1 correspondence $\phi_n: \I^\w \to \K(\J)$.

The pair $(\I^\w,\Pi_3)$ being strongly $\overrightarrow{\mathbf\Pi^0_3}$-universal, there is an embedding $\zeta:\I^\w\to \I^\w$ such that $\zeta^{-1}(\Pi_3)=C$. Put
\begin{equation}\label{eq:xi}
\xi=\psi\zeta.
\end{equation}
 We have
 \begin{equation}\label{eq1}
\xi^{-1}(\mathcal A_n(\J))=C.
\end{equation}
Next, we need a deformation $H:\K(\J)\times \I\to \K(\J)$ through finite sets. Deformation $H$ can be easily modified to satisfy
$$\dist\bigl(K,H(K,t)\bigr)\le 2t\quad\text{and}\quad H(K,t)\subset [-1+t,1-t]$$
(see~\cite[(1-4), p. 183]{GM}).

We are going to verify that the embedding  $g:\I^\w\to \K(\J)$  defined by
\begin{equation}\label{eq:g}
g(x)= H(f(x),\mu(x))\ \cup \ \bigl(\min H(f(x),\mu(x))+ \mu(x)\xi(x)\bigr),
\end{equation}
 where
\begin{equation}\label{eq:mu}
\mu(x) =\frac1{4}\min\{\epsilon, \min\{\dist(f(x),f(z)): z\in B\}\}
\end{equation}
(we use standard operations $\alpha A:= \{\alpha a: a\in A\}$ and $x+ A:=\{x+a: a\in A\}$),
satisfies the definition of strong  $\overrightarrow{\mathbf\Pi^0_3}$-universality of pair $(\K(\J),\mathcal A_n(\J))$.

Clearly, $g$ is continuous and since $\mu(x)=0$ for $x\in B$,  it agrees with $f$ on $B$. It also $\epsilon$-approximates $f$, as
\begin{multline}\label{estim}
\dist(f(x),g(x))\le \\
\dist\bigl(f(x),H(f(x),\mu(x))\bigr) + \dist\bigl(H(f(x),\mu(x)),g(x)\bigr) \le \\ 3\mu(x)=
\frac34\min\{\epsilon, \min\{\dist(f(x),f(z)): z\in B\}\}.
  \end{multline}
Mapping $g$ is 1-1 on $B$. So, let $x,y\in \I^\w \setminus B$. Then both numbers $\mu(x)$ and   $\mu(y)$ are positive. Suppose $g(x)=g(y)$. It follows that
\begin{equation}\label{eq:min}
-\mu(x)=\min g(x)
= \min g(y) = -\mu(y)
\end{equation}
 hence $\mu(x)=\mu(y)$ and $\zeta (x)_j=\zeta (y)_j$ for each $j\in\w$, so $x=y$.
Suppose $x\in B$,  $y\notin B$ and $g(x)=g(y)$. Then $g(y)=f(x)$. On the other hand,
\begin{multline*}
\dist(g(y),f(B))\ge \dist(f(y),f(B))-\dist(f(y),g(y))\ge\\ \frac14\dist(f(y),f(B))>0,\end{multline*}
by~\eqref{estim} and since $f$ is 1-1, a contradiction.
Thus, $g$ is 1-1.

It follows from~\eqref{eq1} that  $g^{-1}(\mathcal A_n(\J))\setminus B=C\setminus B.$

The image  $g(\I^\w)$ is a $Z$-set in $\K(\J)$. Indeed, $g(\I^\w\setminus B)=g( \I^\w)\setminus g(B)$ is a $\sigma Z$-set in $\K(\J)$ because
 deformation $H$ through finite sets
 satisfies
$$\forall(t>0) \ H(g(\K(\J)\setminus B),t) \cap g( \K(\J)\setminus B)=\emptyset$$
(since, for each $x\notin B$ and $t>0$, the set $H(g(x),t)$ is  finite whereas $g(x)$ is not).  Now, $g(B)$ is a $Z$-set, so the union  $g(B)\cup g( \K(\J)\setminus B)=g( \K(\J))$ is a compact $\sigma Z$-set, hence a $Z$-set in $ \K(\J)$.

\end{proof}

\begin{lemma}\label{l:Zset}
$\A_n(\J)$ is contained in a $\sigma Z$-set in $ \K(\J)$.
\end{lemma}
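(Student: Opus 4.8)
The plan is to mimic the construction of the $\sigma$-compact $\sigma Z$-set used in the proof of Theorem~\ref{t:abs}, which simplifies considerably here because the carrier space is the whole compact interval $\J$ and there is no proper subspace to worry about. For each $k\in\N$ I set
\[
Z_k=\bigl\{K\in\K(\J): |K|\ge 2\ \text{and}\ (\exists\, x\in K)\ d(x,K\setminus\{x\})\ge\tfrac1k\bigr\},
\]
where $d$ denotes the usual metric of $\J$, and I claim that $\bigcup_{k\in\N}Z_k$ is the desired $\sigma Z$-set containing $\A_n(\J)$. First I would check that each $Z_k$ is closed in $\K(\J)$: if $K_j\to K$ with witnessing points $x_j\in K_j$, then by compactness of $\J$ one may assume $x_j\to x\in K$, and for any $y\in K\setminus\{x\}$ choosing $y_j\in K_j$ with $y_j\to y$ gives $y_j\neq x_j$ eventually, whence $d(x_j,y_j)\ge\tfrac1k$ and, in the limit, $d(x,y)\ge\tfrac1k$; a limit cannot collapse to a singleton since the diameter stays $\ge\tfrac1k$, so $K\in Z_k$. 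As $\K(\J)\cong\I^\w$ is compact, each $Z_k$ is automatically compact.

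Next comes the inclusion $\A_n(\J)\subset\bigcup_k Z_k$. Any $A\in\A_n(\J)$ is infinite with a finite derived set $A'$ (of size between $1$ and $n$), so $A\setminus A'$ is infinite and consists of isolated points of $A$; picking any isolated point $x$ yields $d(x,A\setminus\{x\})>0$, hence $A\in Z_k$ for all sufficiently large $k$.

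The main step is to show that each $Z_k$ is a $Z$-set in $\K(\J)$, and for this I would use the expansion map $E_t:\K(\J)\to\K(\J)$, $E_t(K)=\{y\in\J: d(y,K)\le t\}$. Since $\J$ is a geodesic space, $K\mapsto E_t(K)$ is $1$-Lipschitz with respect to the Hausdorff metric and therefore continuous, and $\dist\bigl(K,E_t(K)\bigr)\le t$, so $E_t$ approximates the identity of $\K(\J)$ arbitrarily closely as $t\to0^+$. For every $t>0$ the set $E_t(K)$ is a union of nondegenerate closed subintervals of $\J$ (each component contains an interval of length $\ge 2t$), hence has no isolated points and so lies outside every $Z_k$. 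Thus $\id_{\K(\J)}$ is approximated by maps $E_t$ whose images miss $Z_k$, which is exactly the definition of $Z_k$ being a $Z$-set; by Fact~\ref{f7} it is in fact a strong $Z$-set, $\K(\J)$ being locally compact. Consequently $\A_n(\J)$ is contained in the $\sigma Z$-set $\bigcup_{k\in\N}Z_k$.

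I expect the only mildly delicate points to be the closedness of $Z_k$ (ruling out a degeneration to a singleton) and the continuity of the expansion $E_t$; both are routine once the geodesic structure of $\J$ is invoked, so no genuine obstacle arises beyond bookkeeping.
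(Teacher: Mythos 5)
Your proposal is correct and follows essentially the same route as the paper, which simply points back to the construction of the sets $Z_k$ in the proof of Case (1) of Theorem~\ref{t:abs}; you reproduce that construction, specialized to $\J$, where the expansion $E_t$ alone (without the preliminary deformation through $\K(X)$) suffices to push $\K(\J)$ off $\bigcup_k Z_k$. All the details you flag (closedness of $Z_k$, the diameter bound ruling out collapse to a singleton, continuity of $E_t$, and the inclusion via isolated points of sets in $\A_n(\J)$) check out.
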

\begin{proof}
The $\sigma Z$-set we are looking for was constructed in a more general setting (for generalized Peano continua) in the proof of Case (1) of Theorem~\ref{t:abs}.
\end{proof}

Since $\A_n(\I))$ is in class $\mathbf\Pi^0_3$, Lemmas~\ref{l:A1} and~\ref{l:Zset} imply
\begin{theorem}\label{t:An}
The pair  $(\K(\I),\A_n(\I))$ is $\overrightarrow{\mathbf\Pi^0_3}$-absorbing for each $n\in\N$. Consequently, $\A_n(\I)\cong \Pi_3\cong c_0\cong\hat{c_0}$.
\end{theorem}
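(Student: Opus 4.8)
The plan is to show that $(\K(\I),\A_n(\I))$ satisfies the three defining conditions of a $\overrightarrow{\mathbf\Pi^0_3}$-absorbing pair, and then to read off every homeomorphism in the statement from the uniqueness Theorem~\ref{t:fund}. Since $\I$ and $\J=[-1,1]$ are homeomorphic nondegenerate compact intervals, a homeomorphism between them induces a homeomorphism of pairs $(\K(\I),\A_n(\I))\cong(\K(\J),\A_n(\J))$, so throughout I would argue for $\J$ and transport the conclusions back to $\I$.

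First I would assemble the three conditions. Membership $\A_n(\I)\in\mathbf\Pi^0_3$ is Theorem~\ref{p'}(1) combined with Corollary~\ref{Polish}, which identify $\A_n(\I)$ as an absolute $F_{\sigma\delta}$-set. Strong $\overrightarrow{\mathbf\Pi^0_3}$-universality of $(\K(\J),\A_n(\J))$ is exactly the content of Lemma~\ref{l:A1}. Finally, Lemma~\ref{l:Zset} places $\A_n(\J)$ inside a $\sigma Z$-set of $\K(\J)$; since $\K(\I)\cong\I^\w$ is compact, that $\sigma Z$-set is a countable union of closed, hence compact, pieces and is therefore automatically $\sigma$-compact. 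Together these three facts say precisely that the pair $(\K(\I),\A_n(\I))$ is $\overrightarrow{\mathbf\Pi^0_3}$-absorbing.

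For the homeomorphism chain I would next record that the relevant $X$-parts are absolute retracts. The hyperspace $\A_n(\I)$ is an AR by Theorem~\ref{t:6}(2) applied with $M=X=\I$, noting that the interval has no isolated points and is dense, $LC^0$ and connected in itself, while $\Pi_3$ is an AR because, by Lemma~\ref{l:2} and Theorem~\ref{t:KSY}, it is a dense connected $LC^0$ subsemilattice of the Lawson semilattice $\I^\w$, hence homotopy dense in the AR $\I^\w$. As $\K(\I)\cong\I^\w$ and $\I^\w$ are Hilbert cubes (in particular $\I^\w$-manifolds that are AR's), and $(\I^\w,\Pi_3)$ is $\overrightarrow{\mathbf\Pi^0_3}$-absorbing by Corollary~\ref{cor:1}, Theorem~\ref{t:fund} applied to the two absorbing pairs with AR $X$-parts gives $\A_n(\I)\cong\Pi_3$. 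Applying the same theorem to the standard $\overrightarrow{\mathbf\Pi^0_3}$-absorbing pairs $(\R^\w,c_0)$ and $(\I^\w,\hat c_0)$, whose convex $X$-parts are likewise AR's, yields $\Pi_3\cong c_0\cong\hat c_0$ and closes the chain.

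The entire technical weight of the argument lies upstream, in Lemma~\ref{l:A1}, where the explicit embedding $\phi_n$ of \eqref{e:phi} and the approximation $g$ of \eqref{eq:g} are manufactured and shown to be the required $Z$-embeddings; relative to that, the present assembly is bookkeeping. I therefore expect the only residual points to verify are that $\A_n(\I)$ is genuinely an AR, so that Theorem~\ref{t:fund} applies in its strong ``AR'' form, and that the covering $\sigma Z$-set is $\sigma$-compact. Both are immediate here, the first from Theorem~\ref{t:6}(2) and the second from the compactness of $\K(\I)$.
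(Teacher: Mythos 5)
Your proposal is correct and follows essentially the same route as the paper, which likewise deduces the theorem from the $\mathbf\Pi^0_3$-membership of $\A_n(\I)$ together with Lemma~\ref{l:A1} and Lemma~\ref{l:Zset}, and then invokes Theorem~\ref{t:fund} against the absorbing pairs $(\I^\w,\Pi_3)$, $(\R^\w,c_0)$ and $(\I^\w,\hat{c_0})$. Your additional bookkeeping --- transporting from $\J$ to $\I$, checking that the $X$-parts are AR's via Theorem~\ref{t:6}(2) and Lemma~\ref{l:2} with Theorem~\ref{t:KSY}, and noting that $\sigma$-compactness of the covering $\sigma Z$-set is automatic in the compact $\K(\I)$ --- is accurate and merely makes explicit what the paper leaves implicit.
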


\begin{corollary}\label{co1}
$\mathcal A_n((0,1))$, $\mathcal A_n([0,1))$ and  $\mathcal A_n((0,1])$  are also $\mathbf\Pi^0_3$-absorbers in $\K(\I)$ for each $n\in\mathbb N$. Hence they are all homeomorphic to $c_0$.
\end{corollary}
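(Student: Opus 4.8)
The plan is to show that for each $X\in\{(0,1),[0,1),(0,1]\}$ the pair $(\K(\I),\A_n(X))$ is $\overrightarrow{\mathbf\Pi^0_3}$-absorbing, and then invoke the uniqueness Theorem~\ref{t:fund}. First I would observe that in each of the three cases $\K(X)=\{K\in\K(\I):K\subset X\}$ is an open subset of $\K(\I)$, being the complement of one or both of the closed sets $\{K:0\in K\}$ and $\{K:1\in K\}$, and that the derived set of a compactum does not depend on whether it is computed in $X$ or in $\I$; hence $\A_n(X)=\A_n(\I)\cap\K(X)$. Since $\A_n(\I)$ is absolute $F_{\sigma\delta}$ (Corollary~\ref{Polish}) and $\K(X)$ is open, hence in $\mathbf\Pi^0_2\subset\mathbf\Pi^0_3$, the intersection $\A_n(X)$ lies in $\mathbf\Pi^0_3$, which settles the first condition for a $\mathbf\Pi^0_3$-absorber.

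The crux is strong $\overrightarrow{\mathbf\Pi^0_3}$-universality of $(\K(\I),\A_n(X))$, and here I would not rework the construction of Lemma~\ref{l:A1} but transfer it through the general facts of Section~4. Applying Fact~\ref{fact2} to the strongly $\overrightarrow{\mathbf\Pi^0_3}$-universal pair $(\K(\I),\A_n(\I))$ (Lemma~\ref{l:A1}, Theorem~\ref{t:An}) and the open set $U=\K(X)$ shows that $(\K(X),\A_n(X))$ is strongly $\overrightarrow{\mathbf\Pi^0_3}$-universal. To lift this back to the whole Hilbert cube $\K(\I)$ via Fact~\ref{fact1}, it remains to check that $\K(X)$ is homotopy dense in $\K(\I)$. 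This is the one genuinely new point, and it is handled by an explicit contraction of $\I$ into $X$: take the affine isotopy $h_t(x)=\tfrac t2+(1-t)x$ for $X=(0,1)$, $h_t(x)=(1-\tfrac t2)x$ for $X=[0,1)$, and $h_t(x)=\tfrac t2+(1-\tfrac t2)x$ for $X=(0,1]$. In each case $h_0=\id_\I$ and $h_t(\I)\subset X$ for $t>0$, so $H(K,t)=h_t(K)$ defines a deformation of $\K(\I)$ with $H(\K(\I)\times(0,1])\subset\K(X)$; thus $\K(X)$ is homotopy dense in $\K(\I)$ and Fact~\ref{fact1} yields strong $\overrightarrow{\mathbf\Pi^0_3}$-universality of $(\K(\I),\A_n(X))$.

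For the third absorbing condition I would note that $\A_n(X)\subset\A_n(\I)$ is already contained in the $\sigma$-compact $\sigma Z$-set $\bigcup_k Z_k$ of $\K(\I)$ produced in Lemma~\ref{l:Zset} (equivalently in the proof of Theorem~\ref{t:abs}(1)), so this condition is inherited verbatim. Hence each $(\K(\I),\A_n(X))$ is $\overrightarrow{\mathbf\Pi^0_3}$-absorbing, i.e. $\A_n(X)$ is a $\mathbf\Pi^0_3$-absorber in $\K(\I)\cong\I^\w$. Finally, since each of $(0,1)$, $[0,1)$, $(0,1]$ is connected, Theorem~\ref{t:6}(2) makes every $\A_n(X)$ an AR, so by the uniqueness part of Theorem~\ref{t:fund} all of these absorbers, together with $\A_n(\I)$ and $\hat{c_0}$, are homeomorphic; by Theorem~\ref{t:An} this common space is $c_0$. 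The only delicate point in the whole argument is the homotopy density of $\K(X)$ in $\K(\I)$, which the isotopies above settle; everything else is a routine application of the machinery already assembled.
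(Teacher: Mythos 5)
Your proposal is correct, but it takes a different (and more self-contained) route than the paper. The paper's own proof is essentially one line: it observes that $\mathcal A_n((0,1))=\mathcal A_n(\I)\setminus\mathcal B$, where $\mathcal B=\{A\in\K(\I):A\cap\{0,1\}\neq\emptyset\}$ is a $Z$-set in $\K(\I)$, and then cites \cite[Corollary 9.4]{BGM}, which states that the difference of a $\C$-absorber and a $Z$-set in the Hilbert cube is again a $\C$-absorber; the other two intervals are handled the same way. You instead unpack what amounts to the proof of that cited result in this special case: your open set $\K(X)$ is precisely the complement of the relevant $Z$-set, restriction to it preserves strong universality by Fact~\ref{fact2}, and your explicit affine isotopies verify the homotopy density needed to lift back via Fact~\ref{fact1} (which is automatic for complements of $Z$-sets in ANRs, by the remark after Fact~\ref{f6}, so the isotopies could even be omitted). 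Your verification of the Borel class and of containment in the $\sigma$-compact $\sigma Z$-set is correct and matches what the citation would give for free. What the paper's approach buys is brevity; what yours buys is independence from \cite[Corollary 9.4]{BGM}, using only the facts already assembled in Section~4. One tiny streamlining: once $(\K(\I),\A_n(X))$ is known to be strongly universal, Fact~\ref{fact} already makes $\A_n(X)$ an AR, so the appeal to Theorem~\ref{t:6}(2) is not needed.
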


\begin{proof}
Observe that the set  $\mathcal B= \{A\in \K(\I): A\cap \{0,1\}\neq \emptyset\}$ is a $Z$-set in $\K(\I)$ and  $\mathcal A_n((0,1))=\mathcal A_n(\I)\setminus \mathcal B$.
It is known from~\cite[Corollary 9.4]{BGM}  that the difference of an $\mathbf\Pi^0_3$-absorber and a $Z$-set in a Hilbert cube $\K(\I)$ is again an $\mathbf\Pi^0_3$-absorber in $\K(\I)$. The argument for the remaining intervals is similar.
\end{proof}

\begin{remark}
Corollary~\ref{co1} absorbs~\cite[Theorem 2.4, Corollary 2.5]{GO}, \cite[Theorem 5.1]{CMP}
and provides a positive answer to the question in~\cite[Question 2.17]{GO} of whether or not $\mathcal S_c([0,1])$ is homeomorphic to $\mathcal S_c((0,1))$.
\end{remark}

\begin{remark}\label{rem0}
The above method of showing the strong  $\overrightarrow{\mathbf\Pi^0_3}$-universality is specific for $X$ an arc---we continuously select a point from a  finite set (the point $\min H(f(x),\mu(x))$ from $H(f(x),\mu(x))$) and such  selections are characteristic for arcs~\cite{KNY}.
Using Corollary~\ref{co1} and general facts about strongly $\overrightarrow{\mathcal \C}$-univer\-sal pairs, it is  shown in Section~\ref{spheres} that  the pair $(\mathcal K(S^1),\mathcal A_n(S^1))$ is  $\overrightarrow{\mathbf\Pi^0_3}$-absorbing.
One may ask for what other ``nice'' spaces $X$ the pair $(\mathcal K(X),\A_n(X))$ is $\overrightarrow{\mathbf\Pi^0_3}$-absorbing.
 \end{remark}

\section{Hyperspaces $\mathcal A_n(S^1)$, $n\in\N$}\label{spheres}
By  $S^1$ we denote the unit circle in $\R^2$.
\begin{theorem}\label{circle}
The pair $(\mathcal K(S^1),\mathcal A_n(S^1))$ is $\overrightarrow{\mathbf\Pi^0_3}$-absorbing. Hence, $\mathcal A_n(S^1)\cong c_0$.
        \end{theorem}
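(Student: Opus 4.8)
The plan is to verify that $(\K(S^1),\A_n(S^1))$ satisfies the three defining conditions of a $\overrightarrow{\mathbf\Pi^0_3}$-absorbing pair and then to invoke Theorem~\ref{t:fund}. Two of the conditions come for free from the machinery already in place. First, $\A_n(S^1)$ is $F_{\sigma\delta}$ in $\K(S^1)$, hence lies in $\mathbf\Pi^0_3$, by Theorem~\ref{p'}(1) and Corollary~\ref{Polish}. Second, since $S^1$ is a compact (so locally compact) Peano continuum, the closed $Z$-sets $Z_k$ constructed in case (1) of Theorem~\ref{t:abs} (see also Lemma~\ref{l:Zset}, taking $X=M=S^1$) form a $\sigma$-compact $\sigma Z$-set in the Hilbert cube $\K(S^1)\cong\I^\w$ that contains $\mathcal H(S^1)\supseteq\A_n(S^1)$. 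The whole difficulty is therefore concentrated in the third condition: that $(\K(S^1),\A_n(S^1))$ is strongly $\overrightarrow{\mathbf\Pi^0_3}$-universal. Unlike the arc (cf.\ Remark~\ref{rem0}), the circle admits no global continuous selection of a ``first point'', so I would argue \emph{locally} and then patch the charts together.

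For each $p\in S^1$ set $\mathcal U_p=\{K\in\K(S^1):p\notin K\}=\langle S^1\setminus\{p\}\rangle$, an open subset of $\K(S^1)$ which coincides with the hyperspace $\K(S^1\setminus\{p\})$. As $S^1\setminus\{p\}$ is an open arc homeomorphic to $(0,1)$ and the derived-set structure is preserved by homeomorphisms, there is an induced homeomorphism of pairs $(\mathcal U_p,\A_n(S^1)\cap\mathcal U_p)\cong(\K((0,1)),\A_n((0,1)))$. Now $\K((0,1))=\langle(0,1)\rangle$ is open in $\K(\I)$ and contains $\A_n((0,1))$; since $(\K(\I),\A_n((0,1)))$ is $\overrightarrow{\mathbf\Pi^0_3}$-absorbing by Corollary~\ref{co1}, in particular strongly universal, Fact~\ref{fact2} shows that $(\K((0,1)),\A_n((0,1)))$ is strongly $\overrightarrow{\mathbf\Pi^0_3}$-universal, and hence so is each chart $(\mathcal U_p,\A_n(S^1)\cap\mathcal U_p)$. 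The family $\{\mathcal U_p:p\in S^1\}$ is an open cover of $\mathcal O:=\K(S^1)\setminus\{S^1\}$, because a compactum fails to lie in every $\mathcal U_p$ exactly when it equals $S^1$. Applying Fact~\ref{fact3} to the ANR $\mathcal O$ (open in the Hilbert cube) then yields that $(\mathcal O,\A_n(S^1))$ is strongly $\overrightarrow{\mathbf\Pi^0_3}$-universal; note $\A_n(S^1)\subset\mathcal O$ since no countable compactum can be all of $S^1$.

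The hard part, and the only element the charts do not see, is the single ``large'' point $S^1\in\K(S^1)$. I would dispose of it by a homotopy-density argument. By Lemma~\ref{l:5} (with $X=M=S^1$) there is a deformation of $\K(S^1)$ into $\F(S^1)$; since every finite set is a proper subset of $S^1$, this deformation carries $\K(S^1)\times(0,1]$ into $\mathcal O$, so $\mathcal O$ is homotopy dense in $\K(S^1)$. As $\K(S^1)$ is an ANR, $\mathcal O\supseteq\A_n(S^1)$ is homotopy dense, and $(\mathcal O,\A_n(S^1))$ is strongly universal, Fact~\ref{fact1} upgrades strong $\overrightarrow{\mathbf\Pi^0_3}$-universality to the full pair $(\K(S^1),\A_n(S^1))$. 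This completes the three conditions, so the pair is $\overrightarrow{\mathbf\Pi^0_3}$-absorbing.

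Finally I would conclude as follows. Both $(\K(S^1),\A_n(S^1))$ and $(\I^\w,\Pi_3)$ (Corollary~\ref{cor:1}) are $\overrightarrow{\mathbf\Pi^0_3}$-absorbing pairs whose ambient spaces are Hilbert cubes, and both $\A_n(S^1)$ (an AR by Theorem~\ref{t:6}(2), as $S^1$ is connected) and $\Pi_3\cong c_0$ are AR's; hence Theorem~\ref{t:fund} gives $\A_n(S^1)\cong\Pi_3$, while Theorem~\ref{t:An} gives $\Pi_3\cong c_0$, so $\A_n(S^1)\cong c_0$. The step I expect to require the most care is precisely the passage past the point $S^1$: one must check that the chart cover really reaches every other element of $\K(S^1)$ and that $\mathcal O$ is homotopy dense, so that Fact~\ref{fact1} is applicable — which is exactly what the deformation into finite sets supplies.
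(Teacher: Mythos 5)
Your proposal is correct and follows essentially the same route as the paper: cover $\K(S^1)\setminus\{S^1\}$ by the charts $\K(S^1\setminus\{p\})$, transfer strong universality from $(\K(\I),\A_n((0,1)))$ via Corollary~\ref{co1} and Fact~\ref{fact2}, patch with Fact~\ref{fact3}, and pass to the full pair with Fact~\ref{fact1} (the paper phrases the last step as ``$\{S^1\}$ is a $Z$-set, hence its complement is homotopy dense,'' which is exactly what your deformation-into-finite-sets argument establishes). The containment in a $\sigma$-compact $\sigma Z$-set and the final appeal to Theorem~\ref{t:fund} also match the paper's proof.
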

\begin{proof}
The hyperspace $E=\mathcal K(S^m)\setminus \{S^1\}$ is an AR. Let $\mathcal U$ be its open cover by  sets $U_p=\mathcal K(S^1\setminus \{p\})$, $p\in S^1$. The pair $(\mathcal K(\I),\mathcal A_n((0,1)^m)$ is  strongly $\overrightarrow{\mathbf\Pi^0_3}$-universal by Corollary~\ref{co1}. By Fact~\ref{fact2},the pair  $(\mathcal K((-1,1)),\mathcal A_n((0,1)))$ is  strongly $\overrightarrow{\mathbf\Pi^0_3}$-universal. Let $h:(0,1) \to  S^1\setminus \{p\}$ be a homeomorphism and $\tilde{h}: \mathcal K((0,1)) \to \mathcal K(S^1\setminus \{p\})$ be the induced homeomorphism. Clearly, $\tilde{h}( \mathcal A_n((0,1)))=\mathcal A_n(S^1\setminus \{p\})$ and the pair $(U_p,\mathcal A_n(S^1\setminus \{p\}))$ is strongly $\overrightarrow{\mathbf\Pi^0_3}$-universal. Since
$\mathcal A_n(S^1\setminus \{p\})= U_p \cap \mathcal A_n(S^1)$, we  infer by Fact~\ref{fact3} that the pair $(E,\mathcal A_n(S^m))$ is    strongly $\overrightarrow{\mathbf\Pi^0_3}$-universal.
The singleton $\{S^1\}$ being a $Z$-set in  $\mathcal K(S^1)$, $E$ is homotopy dense in $\mathcal K(S^1)$. Then, by Fact~\ref{fact1},  the hyperspace  $(\mathcal K(S^1),\mathcal A_n(S^1))$ is  also  strongly $\overrightarrow{\mathbf\Pi^0_3}$-universal.

The proof that $\mathcal A_n(S^1)$ is contained in a $\sigma Z$-set in $\mathcal K(S^1)$ is the same as for Lemma~\ref{l:Zset}.

\end{proof}

\section{Hyperspaces $\A_n(X)^F$}\label{Peano}

As we have noticed in Remark~\ref{rem0}, there is an essential obstacle in proving that $\mathcal A_n(X)$ is  an $\mathbf \Pi^0_3$-absorber in $\mathcal K(X)$ for nondegenerate Peano continua other than $\I$ and $S^1$. The obstacle disappears for  hyperspaces $\mathcal A_n(X)^F\subset \K(X)^F$ and   $\mathcal A_1(X,\{p\})\subset \K(X)^{\{p\}}$, where $F$ is a fixed finite subset of $X$ which contains a point of order $\ge 2$ and   $p$ is a fixed point of order  $\ge 2$
(a point $p\in X$ is \emph{of order }$\ge 2$ if there is an arc $L\subset X$   containing $p$ in its combinatorial interior).
 The latter hyperspace is a natural counterpart of $c_0$ whose elements converge to the same number $0$.

\

\begin{theorem}\label{t2}
 Suppose $X$ is a  Peano  continuum, $F\subset X$ is  finite and contains a point $p$ of order $\ge 2$, $n\in \mathbb N$.   Then
the pairs $(\K(X)^F,   \mathcal A_n(X)^F)$ and $(\K(X)^{\{p\}}, \mathcal A_1(X,\{p\})$ are $\overrightarrow{\mathbf \Pi^0_3}$-absorbing.

Consequently,
$\mathcal A_n(X)^F\cong \mathcal A_1(X,\{p\})\cong c_0$.
 \end{theorem}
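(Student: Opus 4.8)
The plan is to prove that each of the two pairs is $\overrightarrow{\mathbf\Pi^0_3}$-absorbing and then to apply the uniqueness theorem for absorbing pairs (Theorem~\ref{t:fund}), comparing them with the model pair $(\K(\I),\A_n(\I))$, which is $\overrightarrow{\mathbf\Pi^0_3}$-absorbing with $\A_n(\I)\cong c_0$ by Theorem~\ref{t:An}. Being $\overrightarrow{\mathbf\Pi^0_3}$-absorbing requires three things: that the small set lie in $\mathbf\Pi^0_3$, that the pair be strongly $\overrightarrow{\mathbf\Pi^0_3}$-universal, and that the small set be contained in a $\sigma$-compact $\sigma Z$-set of the ambient Hilbert cube. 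Since $\A_n(X)^F$ and $\A_1(X,\{p\})$ are not subsemilattices of $\K(X)$ (their accumulation conditions are destroyed by unions), Theorem~\ref{t:B2} is unavailable, so strong universality must be verified by an explicit construction in the spirit of Lemma~\ref{l:A1}.

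Two of the three ingredients are comparatively routine. First I would check that $\K(X)^F$ and $\K(X)^{\{p\}}$ are Hilbert cubes: each is a closed, hence compact, connected subsemilattice of the Lawson semilattice $(\K(X),\cup)$, so it is itself a compact Lawson semilattice, and applying Theorem~\ref{t:KSY} to the dense subsemilattice of finite sets containing $F$ (which is $LC^0$ by Lemma~\ref{l:4}) shows it is a nondegenerate compact AR, which one then identifies with $\I^\w$; thus the ambient spaces are $\I^\w$-manifolds. Next, by Theorem~\ref{p'}(1) the sets $\A_n(X)^F=\A_n(X)\cap\K(X)^F$ and $\A_1(X,\{p\})$ are $F_{\sigma\delta}$ in the respective Hilbert cubes, hence in $\mathbf\Pi^0_3$; once strong universality is established, Fact~\ref{fact} makes them homotopy dense, hence AR's. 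For the $\sigma Z$-set condition I would reuse the sets $Z_k$ constructed in the proof of Theorem~\ref{t:abs}(1), intersected with $\K(X)^F$ (resp. $\K(X)^{\{p\}}$): the same expansion deformation preceded by a deformation through finite sets containing $F$ shows each $Z_k\cap\K(X)^F$ is a $Z$-set and that $\A_n(X)^F$ is covered by their union.

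The heart of the argument, and the main obstacle, is strong $\overrightarrow{\mathbf\Pi^0_3}$-universality; this is exactly where the hypothesis $\operatorname{ord}(p)\ge 2$ is used. It supplies an arc $L\subset X$ with $p$ in its combinatorial interior, which I parametrize by a homeomorphism $\ell:\J\to L$ with $\ell(0)=p$, giving a two-sided interval structure inside $X$ localized at the fixed point $p$. I would then transplant the construction of Lemma~\ref{l:A1} to a small sub-arc of $L$ about $p$. Given an $F_{\sigma\delta}$-set $C\subset\I^\w$, a closed $B$, and a $Z$-embedding $f:\I^\w\to\K(X)^F$ on $B$, I deform $f(x)$ by a homotopy $H$ through \emph{finite sets containing $F$}, so that $F\subseteq H(f(x),t)$ and the output stays in $\K(X)^F$; I then attach near $p$ the image $\ell\bigl(\mu(x)\,\xi(x)\bigr)$ of the encoding compactum $\xi(x)$ of \eqref{eq:xi}, scaled by $\mu(x)$ as in \eqref{eq:mu}. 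Because $\xi(x)$ satisfies $\xi^{-1}(\A_n(\J))=C$ by \eqref{eq1}, its positive half carries the $\Pi_3$-pattern (accumulating precisely when $x\in C$) while its negative half is a faithful image of $x$ securing injectivity; the two-sidedness of $L$ around $p$ is what makes both halves fit. Crucially, the \emph{fixed} point $p\in F$, which lies on $H(f(x),\mu(x))$, plays the role of the continuously selected minimum $\min H(f(x),\mu(x))$ of Lemma~\ref{l:A1}; this is how the obstruction of Remark~\ref{rem0}—that continuous selection from a varying finite set is peculiar to arcs—is circumvented. The verifications that $g=f$ on $B$, that $g$ is $\e$-close to $f$, that $g$ is injective with $g(\I^\w)$ a compact $\sigma Z$-set, and that $g^{-1}(\A_n(X)^F)\setminus B=C\setminus B$ then transcribe those of Lemma~\ref{l:A1}, the accumulation count being governed entirely by the attached arc-set since $H(f(x),\mu(x))$ is finite. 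For $\A_1(X,\{p\})$ the same $g$ with $n=1$ works: the attached sequence accumulates only at $p$, so $g(x)'=\{p\}$ exactly when $x\in C$.

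With strong universality in hand, both pairs are $\overrightarrow{\mathbf\Pi^0_3}$-absorbing over the Hilbert cubes $\K(X)^F$ and $\K(X)^{\{p\}}$, with small sets that are AR's. Comparing with the $\overrightarrow{\mathbf\Pi^0_3}$-absorbing pair $(\K(\I),\A_n(\I))$ (Theorem~\ref{t:An}), whose small set is an AR homeomorphic to $c_0$, Theorem~\ref{t:fund} yields $\A_n(X)^F\cong\A_1(X,\{p\})\cong\A_n(\I)\cong c_0$. I expect the genuinely delicate points to be, first, pinning down the Hilbert-cube structure of the containment hyperspaces, and second, checking that the arc-localized encoding simultaneously fixes $F$, produces the correct accumulation count, and keeps $g$ injective and a $Z$-embedding.
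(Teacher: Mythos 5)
Your architecture matches the paper's: $\K(X)^F$ and $\K(X)^{\{p\}}$ are Hilbert cubes, $\A_n(X)^F$ and $\A_1(X,\{p\})$ are $F_{\sigma\delta}$, the $\sigma Z$-cover is recycled from Theorem~\ref{t:abs}(1), the deformation $H$ is corrected to keep $F$ in every image, and the encoding compactum is attached on an arc $L$ through $p$ guaranteed by $\mathrm{ord}(p)\ge 2$, with Theorem~\ref{t:fund} finishing the job. (Minor caveat: a compact AR obtained from Theorem~\ref{t:KSY} is not automatically $\I^\w$; the paper simply cites Curtis--Schori for $\K(X)^F\cong\I^\w$.)

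However, there is a genuine gap at the point you flag but then wave through: the injectivity of $g$. You assert that the fixed point $p$ ``plays the role of the continuously selected minimum'' and that the verifications ``transcribe those of Lemma~\ref{l:A1}.'' They do not. In Lemma~\ref{l:A1} injectivity is proved by reading off $\mu(x)=-\min g(x)$ from the global linear order of $\J$ (equation~\eqref{eq:min}); merely knowing that $p\in g(x)$ recovers nothing. Worse, the finite set $H(f(x),\mu(x))$ (which must contain $F\ni p$) can deposit arbitrary points on $L$ in $\ell([-1,0))$, contaminating the negative half of the attached copy $\ell(\mu(x)\xi(x))$, so neither $\mu(x)$ nor $\zeta(x)$ can be extracted by taking a minimum or by inspecting the whole trace on $L$. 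And the uncontaminated tail alone does not suffice with the original $\phi_n$: its negative part $\{-(2^{-(j+1)}+x_j2^{-(j+2)})\}$ is geometrically self-similar, so a collision $\mu(y)=2\mu(x)$ with an index shift $\zeta(y)_{j+1}=\zeta(x)_j$ produces identical tails. This is exactly why the paper replaces that sequence by $l(x)=a(x')$ with $a(x)_j=-\bigl(2^{2^j}+x_j2^{-2^j}\bigr)^{-1}$ and $x'$ obtained by interleaving constant coordinates equal to $1$: the doubly exponential gaps make consecutive difference vectors pairwise non-parallel across indices (killing shifted collisions and pinning the index alignment), and the interleaved $1$'s then determine $\mu(x)$ from any tail, after which $\zeta(x)$ and hence $x$ are recovered. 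Without some such self-similarity-breaking device your map $g$ is not shown to be injective, and the strong universality claim --- the heart of the theorem --- is unproven.
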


\begin{proof}
Recall that $\mathcal K(X)^F$ is a Hilbert cube~\cite{Cu1}.

Clearly, $\mathcal A_n(X)^F=\mathcal A_n(X)\cap \K(X)^F$ is $F_{\sigma\delta}$ in  $\K(X)^F$. Also $\mathcal A_1(X,\{p\})$ is   $F_{\sigma\delta}$ in $\K(X)^{\{p\}}$, since it  equals the preimage $D^{-1}(\{p\})$, where $D$ is the derived set operator on $\K(X)^{\{p\}}$.

In order to prove the strong $\overrightarrow{\mathbf \Pi^0_3}$-universality, we proceed similarly to the proof of Lemma~\ref{l:A1}.

There is a deformation $H:\mathcal K(X)\times [0,1]\to \mathcal K(X)$ through finite sets such that $\dist (H(A,t),A)\le 2t$.
If we add  $F$  to each $H(A,t)$ we get  a continuous deformation $\K(X)^F\times [0,1] \to \K(X)^F$ through finite sets satisfying  $\dist (H(A,t),A)\le 2t$ for $A\in \K(X)^F$.  So, we can assume that $H:\K(X)^F\times [0,1] \to \K(X)^F$ is such. Choose  an arc $L\subset X$ containing $p$ in its combinatorial interior.   Note that each set $H(A,t)$ contains $p$. To simplify further description, assume without loss of generality that  $L=\J=[-1,1]$ and $p=0$. We    modify the definition of embedding $\phi_n$ from~\eqref{e:phi} in its ``negative'' part in which the sequence $\{-(2^{-(j+1)}+x_j2^{-(j+2)}): j\in\w\}$ is now replaced with an increasing  sequence $l(x)$ obtained in the following way.
For any $x=(x_j)\in \I^\w$, put
$$a(x)_j= -\bigl(2^{2^j}+\frac{x_j}{2^{2^j}}\bigr)^{-1}.$$
Observe that the sequence $a(x)=(a(x)_j)$ satisfies
\begin{enumerate}
\item
$a(x)$ is strictly increasing and converging to $0$,
\item
for each $x,y\in \I^\w$ and $i<j$, vectors $$\bigl(a(x)_i, a(x)_{i+1}\bigr)\quad\text{and}\quad  \bigl(a(y)_j, a(y)_{j+1}\bigr)$$ are not parallel.
\end{enumerate}
Let  $x'\in \I^\w$ be the sequence $1, x_1,1, x_1,x_2,1, x_1,x_2,x_3,1, \ldots$. Put $l(x)=a(x')$. Clearly, $l(x)$ also satisfies conditions (1-2).

Now, let
\begin{multline}\label{psil}
    \psi_n(x)=  l(x)  \cup \chi(n) \cup\\
   \cl\bigl(\bigl\{2^{-(j+1)}+2^{-(j+k+1)}x_{2^{j-n}(2k+1)}:j\ge n, k\in\w\bigr\}\bigr)
   \end{multline}

  Note that, for each $n\in \mathbb N $, we have
 \begin{equation}\label{psilreduction}
\psi_n^{-1}\bigl(\mathcal A_n(\J)\bigr)=\Pi_3 = \psi_1^{-1}\bigl(\mathcal A_1(\J,\{0\})\bigr).
\end{equation}
  Define
\begin{equation}\label{eq:g2}
g(x)= H(f(x),\mu(x))\ \cup \ \mu(x)\xi(x).
\end{equation}
where $\mu(x)$ is defined in~\eqref{eq:mu} and  $\xi(x)$ is defined  by~\eqref{eq:xi} with $\psi_n$ modified as above.
Now, $l(x)$ is responsible for  $g(x)$ being 1-1. Indeed, suppose  $x,y\in \I^\w \setminus B$ and $g(x)=g(y)$. Then $\mu(x)$ and $\mu(y)$ are positive.  If the set $$W=\bigl(H(f(x),\mu(x)) \cup H(f(y),\mu(y))\bigr)\cap [-1,0)$$ is non-empty, let $\alpha=\max W$
and
 notice that, for sufficiently large $k$, say for $k\ge j$,  numbers $\mu(x)l(\zeta(x))_k$ and  $\mu(y)l(\zeta(y))_k$ are greater than  $\alpha$;
if $W=\emptyset$, then put $j=0$.
So, we can assume  that $\mu(x)l(\zeta(x))_j=\mu(y)l(\zeta(y))_i$ for some $i\ge j$. Since sequences $\mu(x)l(\zeta(x))$ and $\mu(y)l(\zeta(y))$ are increasing, it follows that also $\mu(x)l(\zeta(x))_{j+1}=\mu(y)l(\zeta(y))_{i+1}$. Thus $i=j$ by    property $(2)$. But then   $\mu(x)l(\zeta(x))_k=\mu(y)l(\zeta(y))_k$ for each $k\ge j$. Choose $m\ge j$ such that  $(\zeta(x)')_m=1=(\zeta(y)')_m$. Then
\begin{multline*}
-\mu(x)(2^{2^m}+\frac{1}{2^{2^m}})^{-1}= \mu(x)a(\zeta(x)')_m=\mu(x)l(\zeta(x))_m=\\
\mu(y)l(\zeta(y))_m=\mu(y)a(\zeta(y)')_m=-\mu(y)(2^{2^m}+\frac{1}{2^{2^m}})^{-1}
\end{multline*}
which implies
$\mu(x)=\mu(y)$. Hence, for each $k\ge j$,
\begin{multline*}
-\bigl(2^{2^k}+\frac{\zeta(x)'_k}{2^{2^k}}\bigr)^{-1}=a(\zeta(x)')_k=l(\zeta(x))_k=\\
l(\zeta(y))_k=a(\zeta(y)')_k= -\bigl(2^{2^k}+\frac{\zeta(y)'_k}{2^{2^k}}\bigr)^{-1},
\end{multline*}
so $\zeta(x)'_k=\zeta(y)'_k.$
 Consequently, $\zeta(x)=\zeta(y)$ and $x=y$.

The remaining arguments are exactly the same as in the proofs of Lemmas~\ref{l:A1}  and ~\ref{l:Zset} .

\end{proof}

\end{document}